\newcommand{\N}{\mathbb{N}}
\newcommand{\R}{\mathbb{R}}
\newcommand{\Z}{\mathbb{Z}}
\newcommand{\Natural}{\mathbb N}
\newcommand{\Real}{\mathbb R}
\newcommand{\set}[1]{\left\{#1\right\}}
\newcommand\restr[2]{{
		\left.\kern-\nulldelimiterspace 
		#1 
		\littletaller 
		\right|_{#2} 
}}
\newcommand{\littletaller}{\mathchoice{\vphantom{\big|}}{}{}{}}
\definecolor{egraf}{rgb}{0.2,0.4,0}
\newcommand{\diam}{\mathop{\mathrm{diam}}\nolimits}
\newcommand{\Sz}{\mathop{\mathrm{Sz}}\nolimits}
\newcommand{\norm}[1]{\left\Vert#1\right\Vert}
\newcommand{\duality}[1]{\left\langle#1\right\rangle}
\newcommand{\closedball}[1]{B_{#1}}
\newcommand{\Free}{{\mathcal F}}
\newcommand{\Lip}{{\mathrm{Lip}}}
\newcommand{\F}{\mathcal{F}}
\newcommand{\eps}{\varepsilon}
\def\<{\langle}
\def\>{\rangle}
\newcommand{\mol}[1]{m_{#1}}                                
\theoremstyle{plain}
\newtheorem{thm}{Theorem}[section]
\newtheorem{theorem}[thm]{Theorem}
\newtheorem{corollary}[thm]{Corollary}
\newtheorem{lem}[thm]{Lemma}
\newtheorem{lemma}[thm]{Lemma}
\newtheorem{prop}[thm]{Proposition}
\newtheorem{proposition}[thm]{Proposition}
\theoremstyle{definition}
\newtheorem{definition}[thm]{Definition}
\newtheorem{remark}[thm]{Remark}
\author[E. Basset]{Estelle Basset}
\address[E. Basset]{Universit\'e Marie et Louis Pasteur, CNRS, LmB (UMR 6623), F-25000 Besan\c con, France.}
\email{estelle.basset@univ-fcomte.fr}
\author[G. Lancien]{Gilles Lancien}
\address[G. Lancien]{Universit\'e Marie et Louis Pasteur, CNRS, LmB (UMR 6623), F-25000 Besan\c con, France.}
\email{gilles.lancien@univ-fcomte.fr}
\author[A. Proch\'azka]{Anton\'in Proch\'azka}
\address[A. Proch\'azka]{Universit\'e Marie et Louis Pasteur, CNRS, LmB (UMR 6623), F-25000 Besan\c con, France.}
\email{antonin.prochazka@univ-fcomte.fr}
\title{Diversity of Lipschitz-free spaces over countable complete discrete metric spaces}
\begin{document}

\begin{abstract} 
We show that there are uncountably many mutually non-isomorphic Lipschitz-free spaces over countable, complete, discrete metric spaces.
Also there is a countable, complete, discrete metric space whose free space does not embed into the free space of any uniformly discrete metric space.
This enhanced diversity is a consequence of the fact that the dentability index $D$ presents a highly non-binary
behavior when assigned to the free spaces of metric spaces outside of the oppressive confines of compact purely 1-unrectifiable spaces. 
Indeed,
the cardinality of $\{D(\Free(M)): M$ countable, complete, discrete$\}$ is uncountable while $\{D(\Free(M)):M$  infinite, compact, purely 1-unrectifiable$\}=\{\omega,\omega^2\}$.
Similar barrier is observed for uniformly discrete metric spaces as higher values of the dentability index are excluded for their  free spaces:\\ $\{D(\Free(M)):M$ infinite, uniformly discrete$\}=\set{\omega^2,\omega^3}$. 
\end{abstract}

\maketitle

\section{Introduction}
The isomorphic theory of Lipschitz-free spaces (free spaces henceforth), roughly speaking, tries to decide under which conditions on metric spaces $M$ and $N$ the corresponding free spaces $\Free(M)$ and $\Free(N)$ are, resp. are not, isomorphic.
The body of knowledge in both directions is slowly growing. 
Let us mention \cite{DutrieuxFerenczi,Kaufmann,GartlandFreeman,AlbiacAnsorenaCuthDoucha} which provide techniques for producing non-trivial isomorphisms between free spaces. On the other hand the papers  \cite{GodefroyKalton,NaorSchechtmann,HajekLancienPernecka,ANPP,AGPP,BaudierGartlandSchlumprecht,Basset} point out examples of free spaces which are not isomorphic as well as techniques permitting to disprove the existence of isomorphism between given $\Free(M)$ and $\Free(N)$. 
Generally speaking, such techniques are based on studying the isomorphic properties of free spaces. 
The present article falls within this second category and can be considered a follow-up work to~\cite{Basset}.
In~\cite{Basset}, technical results (see below) concerning the weak-fragmentability index $\Phi$ of free spaces lead to the following findings:
\begin{itemize}
    \item there is a  countable uniformly discrete metric space $M$
    such that $\Free(M)$ is not isomorphic to the free space of any compact metric space and does not embed into the free space of any compact purely 1-unrectifiable metric space.
    \item there is a purely 1-unrectifiable metric space $M$ whose free space does not embed into the free space of any uniformly discrete metric space.
    \item there is no separable purely 1-unrectifiable metric space  universal for the class of countable complete metric spaces and bi-Lipschitz embeddings.
\end{itemize}
Here, using technical results (see the abstract and below) concerning the dentability index $D$ of free spaces, we are going to strengthen the above claims as follows:
\begin{itemize}
    \item there is a countable complete discrete metric space $M$ whose free space does not embed into the free space of any uniformly discrete metric space (see Corollary~\ref{c:CCD-cpt}). This answers a question asked by Eva Perneck\'a and Ram\'on Aliaga.
    \item there is no separable purely 1-unrectifiable metric space universal for the class of countable complete discrete spaces and bi-Lipschitz embeddings (see Corollary~\ref{c:NoUniversal}).
\end{itemize}

We now discuss the lower-level results behind the above claims.
Let us recall that it is known (see~\cite{AGPP}) that $\Free(M)$ has the Radon-Nikod\'ym property (RNP) if and only if it has the point of continuity property (PCP) if and only if $M$ is purely-1-unrectifiable (p-1-u). 
In this context two ordinal indices defined for a Banach space $X$ are highly relevant: the dentability index $D(X)$ and the weak fragmentability index $\Phi(X)$. 
Each of these indices gives rise to an uncountable number of isomorphic properties corresponding to the classes on the right-hand side in the following well-known equalities:
\[
\set{X:X \mbox{ is separable with the RNP}}=\bigcup_{\alpha<\omega_1} \set{X:X \mbox{ is separable and }D(X)\leq \alpha}
\]
and
\[
\set{X:X \mbox{ is separable with the PCP}}=\bigcup_{\alpha<\omega_1} \set{X:X \mbox{ is separable and }\Phi(X)\leq \alpha},
\]
where $\omega_1$ is the first uncountable ordinal. The goal of the present paper is to study the behavior of these two indices when applied to the free spaces over metric spaces coming from the following natural classes of p-1-u metric spaces:
\begin{itemize}
    \item compact/proper p-1-u metric spaces,
    \item separable, uniformly discrete metric spaces,
    \item separable, complete, discrete metric spaces.
\end{itemize}

We obtain the following results: the dentability index of $\F(M)$ is at most $\omega^3$ when $M$ is uniformly discrete (see Theorem~\ref{th:D(F(grid))} and Corollary~\ref{c:DentabilityCUD}), and it is at most $\omega^2$ when $M$ is a proper p-1-u metric space (see Remark~\ref{rem:p-1-uProper}).
The estimate for $M$ uniformly discrete is based on the following inequality of independent interest: $D(X)\leq \Phi(L_2(X))$, see \cite[Lemma 2.6]{Lancien1995}.
We provide an example of  a uniformly discrete metric subspace $M$ of $L_1$ such that $D(\F(M))=\omega^3$ (Theorem~\ref{th:D(F(grid))}).
Finally, in Theorem~\ref{thm:mainexample} we show that there exist countable complete discrete metric spaces whose free spaces have arbitrarily high dentability index.

\section{Preliminaries}

\subsection{General notation}

All Banach spaces will be over the field $\R$. Given two Banach spaces $X$ and $Y$, we denote $B(X,Y)$ the space of all linear bounded operators from $X$ to $Y$, and we write $X^* = B(X, \R)$ the topological dual of $X$. We denote the closed unit ball of $X$ by $B_X$, and its unit sphere by $S_X$. 

Given a measure space $(\Omega,\Sigma,\mu)$ and $p \in [1, +\infty)$, we write $L^p(X, \mu)$ the space of Bochner-measurable functions $f \colon \Omega \to X$ such that $|f|^p$ is integrable with respect to $\mu$, equipped with the norm $\norm{f}_p \coloneqq \big(\int_\Omega f^p d\mu\big)^{1/p}$.

Given two metric spaces $(M, d_M)$ and $(N, d_N)$ and $D \in [1, +\infty)$, we say that $M$ \textit{bi-Lipschitz embeds into $N$ with distortion $D$} if there exists $s \in (0, +\infty)$ and a map $f \colon M \to N$ satisfying, for every $x, y \in M$:
\[
s\cdot d_M(x,y) \leq d_N(f(x),f(y)) \leq s\cdot D\cdot d_M(x,y).
\]

If $\set{(M_i, d_i)}_{i \in I}$ is a collection of metric spaces, 
we say that $\set{M_i}_{i \in I}$
\emph{embeds with equal distortion into $N$}
if there exists $D \in [1,+\infty)$
such that for every $i \in I$, there exist a map $f_i \colon M_i \to N$ and a scaling factor $s_i \in (0,+\infty)$ satisfying, for every $x, y \in M_i$:
\[
s_i\cdot d_i(x,y) \leq d_N(f_i(x),f_i(y)) \leq s_i\cdot D\cdot d_i(x,y).
\]
This is similar to but weaker than the situation when $\set{M_i}_{i \in I}$ 
\emph{equi-bi-Lipschitz embeds into $N$}, i.e. when 
there exist $D \in [1, +\infty)$ and $s \in (0, +\infty)$ such that for every $i \in I$, there exists a map $f_i \colon M_i \to N$  satisfying, for every $x, y \in M_i$:
\[
s\cdot d_i(x,y) \leq d_N(f_i(x),f_i(y)) \leq s\cdot D\cdot d_i(x,y).
\]
Note that if $N$ is a Banach space, then $\set{M_i}_{i \in I}$ embeds with equal distortion into $N$ if and only if it equi-bi-Lipschitz embeds into $N$ if and only if there exists $D \in [1, +\infty)$ such that for every $i \in I$, there exists a map $f_i \colon M_i \to N$ satisfying, for every $x, y \in M_i$:
\[
d_i(x,y) \leq \|f_i(x),f_i(y)\|_N \leq D \cdot d_i(x,y).
\]

Given $a, b \in (0,\infty)$, we say that a subset $N$ of a metric space $(M, d)$ is a \textit{$(a, b)$-net of $M$} when the two following conditions are satisfied:
\begin{enumerate}
    \item[-] $N$ is $a$-separated, that is, $d(x, y) \geq a$ for every $x \neq y \in N$;

    \item[-] $N$ is $b$-dense in $M$, that is, for every $x \in M$, there exists $y \in N$ such that $d(x, y) \leq b$.
\end{enumerate}
We say that $N$ is a \textit{net of $M$} whenever there exist $a, b \in (0,\infty)$ such that $N$ is a $(a,b)$-net of $M$.

We denote by $\Z^{<\omega}$ the set of all finitely supported sequences of integers. Let now $X$ be a Banach space equipped with a Schauder basis $\mathcal B=(e_n)_{n =1}^\infty$, we call \textit{integer grid of $\mathcal B$} the following subset of $X$:
\[
\Z_{\mathcal B} \coloneqq \Big\{\sum\limits_{n=1}^\infty a_n e_n,\  (a_n)_{n=1}^\infty \in \Z^{<\omega}\Big\}.
\]
We simply denote $\Z_{c_0}$ the integer grid of $c_0$ equipped with its canonical basis. Let us recall that $\Z_{c_0}$ has the particular property to also be a $(1, 1)$-net of $c_0$. 
We shall also use $\Z_{Haar}$ the integer grid of the normalized Haar basis of $L_1=L_1([0,1])$.

\subsection{Lipschitz-free spaces}

By a \emph{pointed metric space} we mean a metric space $M$ containing a distinguished point, denoted 0 and called \emph{basepoint} or \emph{origin}. If $M$ and $N$ are pointed metric spaces then $\Lip_0(M,N)$ stands for the set of all Lipschitz functions $f :M \to N$ such that $f(0) = 0$. 
If $N=X$ is a real Banach space, then $\Lip_0(M,X)$ is a Banach space when equipped with the Lipschitz norm:
$$ \forall f \in \Lip_0(M,X), \quad \|f\|_L = \sup_{x \neq y \in M} \frac{\|f(x)-f(y)\|_X}{d(x,y)}.$$
In the case $X = \R$, we denote $\Lip_0(M):=\Lip_0(M,\R)$.
Then, for $x\in M$, we let $\delta_M(x) \in \Lip_0(M)^*$ be the evaluation functional defined by $\langle\delta_M(x) , f \rangle  = f(x)$, for all  $f$  in $\Lip_0(M)$. 
It is readily seen that $\delta_M: x \in M \mapsto \delta_M(x) \in \Lip_0(M)^*$ is an isometry. 
The \textit{Lipschitz-free space over $M$} (free space for brevity) is then defined as the closed linear span in $\Lip_0(M)^*$ of all such evaluation functionals:
$$\F(M) := \overline{ \mbox{span}}^{\| \cdot  \|}\left \{ \delta_M(x) \, : \, x \in M  \right \} \subseteq \Lip_0(M)^*.$$
For a classical reference on free spaces, we refer the reader to~\cite{Weaver2}.

Let us now recall the universal extension property of free spaces. Let  $X$ be a Banach space and $f \in \Lip_{0}(M,X)$. Then there exists a unique $\overline{f} \in B(\F(M),X)$ such that $f = \overline{f} \circ \delta_M$. In other words, we have the following commutative diagram:  

$$ \xymatrix{
			M \ar[r]^f \ar[d]_{\delta_M}  & X  \\
			{\F(M)} \ar[ur]_{\overline{f}} } 
$$
      Moreover $\| \overline{f} \|_{B(\F(M),X)} = \| f \|_L$.
Therefore, $\Lip_0(M,X)$ is linearly isometric to $B(\F(M),X)$ and, in particular, $\F(M)^*$ is linearly isometric to $\Lip_0(M)$. Then the weak$^*$ topology induced by $\F(M)$ on $\Lip_0(M)$ coincides on bounded subsets of $\Lip_0(M)$ with the topology of pointwise convergence. 

Another application of the universal extension property is the following factorization result. Let $M,N$ be pointed metric spaces and $f:M\to N$ a Lipschitz map satisfying $f(0)=0$. Then there exists a unique $\widehat f\in B(\F(M),\F(N))$ such that $\widehat f \circ \delta_M=\delta_N \circ f$. Moreover $\| \overline{f} \|_{B(\F(M),\F(N))} = \Lip(f)$. Thus, the following diagram commutes:

$$\xymatrix{
	M \ar[r]^f \ar[d]_{\delta_M}  & N \ar[d]^{\delta_N} \\
	\F(M) \ar[r]_{\widehat{f}} & \F(N).
}
$$
It follows easily from this factorization that if a metric space $M$ bi-Lipschitz embeds into a metric space $N$, then $\F(M)$ is linearly isomorphic to a subspace of $\F(N)$.

A fundamental tool will be McShane-Whitney's extension theorem, which allows to extend a real valued Lipschitz map defined on a subset $K$ of $M$ to the whole space $M$ without increasing its Lipschitz constant; see e.g. \cite[Theorem 1.33]{Weaver2}. As a first application one gets that  if $0 \in K \subset M$, then $\F(K)$ is linearly isometric to $\F_M(K)$ where: 
$$\F_M(K) := \overline{\text{span}} \{ \delta_M(x) \; | \; x \in K\} \subseteq \F(M).$$
This canonical identification will be used throughout this paper. 

Next, for $x \neq y  \in M$, the \emph{molecule} $m_{x, y} \in S_{\F(M)}$ is defined by:
$$ m_{x, y} := \frac{\delta_M(x) - \delta_M(y)}{d(x,y)}.$$

We conclude this introduction on Lipschitz-free spaces by recalling a fundamental decomposition result due to N. Kalton (Proposition 4.3 in \cite{Kalton2004}).

\begin{proposition}[Kalton's decomposition]
    \label{Kalton's decomposition}
    Let $(M, d)$ be a pointed metric space. For $k \in \Z$, we set $M_k = \set{x \in M,\, d(x, 0) \leq 2^k}$. 
    Then $\F(M)$ is linearly isomorphic to a subspace of $\big(\sum_{k \in \Z} \mathcal{F}(M_k)\big)_{\ell_1}$.  
\end{proposition}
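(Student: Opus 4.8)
The plan is to construct an explicit isomorphic embedding $T\colon\F(M)\to\big(\sum_{k\in\Z}\F(M_{k+1})\big)_{\ell_1}$ --- which, modulo relabeling the index set, is exactly the assertion --- out of a radial Lipschitz partition of unity subordinate to the dyadic annuli $U_k:=\set{x\in M:\ 2^{k-1}<d(x,0)<2^{k+1}}$. Explicitly, I would set $\lambda_k(0)=0$ and $\lambda_k(x)=\psi(\log_2 d(x,0)-k)$ for $x\neq 0$, where $\psi(t)=\max(0,1-|t|)$ is the hat function. Since $\sum_{j\in\Z}\psi(\,\cdot\,-j)\equiv 1$ on $\R$, this gives $\sum_{k\in\Z}\lambda_k\equiv 1$ on $M\setminus\set{0}$; moreover $\supp\lambda_k\subseteq U_k\subseteq M_{k+1}$, at most two of the $\lambda_k$ are nonzero at any given point, and, since $\log_2$ is $3\cdot 2^{-k}$-Lipschitz on $(2^{k-1},2^{k+1})$ while $d(\cdot,0)$ is $1$-Lipschitz, $\Lip(\lambda_k)\le 3\cdot 2^{-k}$ on $M$.

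The crucial observation --- and where the dyadic scale is essential --- is that multiplication by $\lambda_k$ is bounded on $\Lip_0(M)$ with a bound \emph{independent of $k$}. Indeed, for $g\in\Lip_0(M)$ one has $\lambda_k g\in\Lip_0(M)$ (since $\lambda_k(0)=0$), and a Leibniz-type splitting of $\lambda_k(x)g(x)-\lambda_k(y)g(y)$, using $|\lambda_k|\le 1$ together with the bound $|g(x)|=|g(x)-g(0)|\le 2^{k+1}\Lip(g)$ valid on $\supp\lambda_k\subseteq M_{k+1}$, gives $\Lip(\lambda_k g)\le\big(1+2^{k+1}\Lip(\lambda_k)\big)\Lip(g)\le 7\,\Lip(g)$; the factor $2^{k+1}$ coming from the diameter of the region carrying $\lambda_k$ exactly absorbs the $2^{-k}$ in $\Lip(\lambda_k)$. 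I would then define $T_k$ on the dense subspace $\linsp\set{\delta_M(x):x\in M}$ of $\F(M)$ by $T_k\delta_M(x)=\lambda_k(x)\,\delta_M(x)$; the identity $\langle T_k\mu,g\rangle=\langle\mu,\lambda_k g\rangle$ for $g\in\Lip_0(M)$ shows $\norm{T_k\mu}\le 7\norm{\mu}$, so $T_k$ extends to an operator on $\F(M)$ with $\norm{T_k}\le 7$. As $T_k\delta_M(x)$ is $0$ or a multiple of $\delta_M(x)$ with $x\in M_{k+1}$, we obtain $\rng T_k\subseteq\F_M(M_{k+1})$, which is isometric to $\F(M_{k+1})$ by McShane--Whitney.

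Set $T\mu=(T_k\mu)_{k\in\Z}$; on the span of evaluation functionals only finitely many coordinates of $T\mu$ are nonzero, so it suffices to estimate there and extend by density. For the upper bound, fix $\eps>0$; for each $k$ with $T_k\mu\neq 0$ choose $g_k\in\Lip_0(M_{k+1})$ with $\Lip(g_k)\le 1$ and $\langle T_k\mu,g_k\rangle\ge(1-\eps)\norm{T_k\mu}$, extend $g_k$ to $\tilde g_k\in\Lip_0(M)$ with the same Lipschitz constant by McShane--Whitney (and set $g_k=\tilde g_k=0$ otherwise). Then $\sum_k\langle T_k\mu,g_k\rangle=\langle\mu,h\rangle$ with $h:=\sum_k\lambda_k\tilde g_k\in\Lip_0(M)$, and one bounds $\Lip(h)$ by an absolute constant: for any $x,y\in M$ at most four indices $k$ contribute to $h(x)-h(y)$ (those with $\lambda_k(x)\neq 0$ or $\lambda_k(y)\neq 0$), and each contributes $\le 7\,d(x,y)$ by the estimate above, so $\Lip(h)\le 28$. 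Hence $(1-\eps)\sum_k\norm{T_k\mu}\le\langle\mu,h\rangle\le 28\norm{\mu}$, and letting $\eps\to 0$ gives $\norm{T\mu}_{\ell_1}\le 28\norm{\mu}$, so $T$ is a bounded operator into the $\ell_1$-sum. For the lower bound, the pointwise identity $f=\sum_k\lambda_k f$ gives, for $f\in\Lip_0(M)$ with $\Lip(f)\le 1$, $\langle\mu,f\rangle=\sum_k\langle T_k\mu,f\rangle$, and each term is at most $\norm{T_k\mu}$ since $\Lip(f|_{M_{k+1}})\le 1$; taking the supremum over such $f$ yields $\norm{\mu}\le\sum_k\norm{T_k\mu}=\norm{T\mu}_{\ell_1}$. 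Thus $T$ is an isomorphic embedding, which establishes the claim.

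The step I expect to be the main obstacle is precisely the uniform-in-$k$ control, i.e.\ the Leibniz estimate $\Lip(\lambda_k g)\le C\,\Lip(g)$ with $C$ absolute, because it is this estimate that forces the use of the dyadic cover: $\Lip(\lambda_k)\asymp 2^{-k}$ must be balanced against $\diam(\supp\lambda_k)\asymp 2^{k}$, and the same estimate --- together with the finite-overlap property of the cover --- is what then bounds $\Lip(h)$ and hence $\norm{T}$. The surrounding bookkeeping --- the case split according to whether $\lambda_k$ vanishes at $x$, at $y$, or at neither --- is routine but is the most computation-heavy part.
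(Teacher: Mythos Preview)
The paper does not prove this proposition; it simply cites it as Proposition~4.3 in Kalton's 2004 paper \cite{Kalton2004}. Your argument is correct and is essentially Kalton's original proof: a radial Lipschitz partition of unity $(\lambda_k)_{k\in\Z}$ subordinate to the dyadic annuli, the uniform Leibniz bound $\Lip(\lambda_k g)\le C\,\Lip(g)$ (which, as you rightly identify, is the heart of the matter and is precisely where the dyadic scaling is used), and the resulting multiplication operators $T_k$ assembled into an isomorphic embedding into the $\ell_1$-sum. The only point where your write-up is slightly terse is the Leibniz estimate itself: the splitting $\lambda_k(x)g(x)-\lambda_k(y)g(y)=\lambda_k(x)(g(x)-g(y))+(\lambda_k(x)-\lambda_k(y))g(y)$ requires a bound on $|g(y)|$, but $y$ need not lie in $\supp\lambda_k$; one handles this by the case split you allude to at the end (use the symmetric splitting when $\lambda_k(y)=0$ and $\lambda_k(x)\neq 0$, so that the pointwise bound is applied at the point that \emph{is} in the support). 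With that understood, the bound $\Lip(\lambda_k g)\le 7\Lip(g)$ and the subsequent estimates go through as written.
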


\subsection{Various fragmentability indices of Banach spaces}

Let us describe a general peeling scheme which is frequently used to assign some isomorphically invariant ordinal index to a given Banach space $X$. Assume that $\mathcal A$ is a collection of open subsets of $X$ that is stable under dilations and translations and let $C$ be a subset of $X$. 
For $\varepsilon>0$ we define the set derivation
\[
[C]_\varepsilon'=C \setminus \bigcup \set{A \in {\mathcal A}: \diam(A \cap C)<\varepsilon}
\]
and we put
\[
[C]_\varepsilon^0:=C, \quad [C]_\varepsilon^{\alpha+1}:=[[C]_\varepsilon^\alpha]_\varepsilon' \quad\mbox{ and }\quad [C]_\varepsilon^{\beta}:=\bigcap_{\alpha<\beta} [C]_\varepsilon^\alpha
\]
for every ordinal $\alpha$ and every limit ordinal $\beta$.
Further we define 
\[
\iota_{\mathcal A}(X,\varepsilon):=\inf \set{\alpha: [\closedball{X}]_\varepsilon^\alpha=\emptyset} \mbox{ and } \iota_{\mathcal A}(X):=\sup_{\varepsilon>0} \iota_{\mathcal A}(X,\varepsilon),
\]
adopting the convention that $\inf \emptyset = \infty$ and $\alpha < \infty$ for every ordinal $\alpha$. It is clear that $\iota_{\mathcal A}(X)<\infty$ if and only if, for every bounded subset $C$ of $X$ and every $\eps>0$, there exists $A \in \mathcal A$ such that $A \cap C \neq \emptyset$ and $\diam (A \cap C)<\eps$. Then, we say that $X$ is \emph{fragmentable by $\mathcal A$}. Then the value of $\iota_{\mathcal A}(X)$  measures how fast or efficiently this fragmentability operates and we call it the \emph{$\mathcal A$-fragmentability index of $X$}.
Naturally, if $\mathcal A$ is a class stable under linear isomorphisms, then $\iota_{\mathcal A}(X)$ is an isomorphic invariant. 

\medskip
We now detail the three main examples that will be considered in this paper. 
\begin{enumerate}
    \item If we consider $\mathcal A$ to be the collection of weak open subsets of a Banach space $X$, the index $\iota_{\mathcal A}(X)$ is called the \emph{weak fragmentability index of $X$} and for $C$ a subset of $X$, we denote $\sigma_\varepsilon^\alpha(C):=[C]_\varepsilon^\alpha$. Then $\Phi(X,\eps):=\iota_{\mathcal A}(X,\eps)$ and $\Phi(X):=\iota_{\mathcal A}(X)$. The index $\Phi(X)$ is clearly an isomorphic invariant. 
    \item We call an \emph{open slice} of a Banach space $X$ a set of the form $S=\{x\in X,\ x^*(x)>a\}$, with $x^*\in X^*$ and $a\in \R$. If $\mathcal A$ is the collection of open slices of $X$, then $\iota_{\mathcal A}(X)$ is called the \emph{dentability index} of $X$ and for $C$ a subset of $X$, we denote $d_\varepsilon^\alpha(C):=[C]_\varepsilon^\alpha$. Then $D(X,\eps):=\iota_{\mathcal A}(X,\eps)$ and $D(X):=\iota_{\mathcal A}(X)$. Again, the index $D(X)$ is  an isomorphic invariant and, since open slices are weakly open, $\Phi(X)\le D(X)$. 
    \item If $X=Y^*$ is a dual Banach space and $\mathcal A$ is the collection of weak$^*$ open sets of $Y^*$, the index $\iota_{\mathcal A}(X)$ is called the \emph{Szlenk index of $Y$} and for $C$ subset of $Y^*$, we denote $s_\varepsilon^\alpha(C):=[C]_\varepsilon^\alpha$. Then $Sz(Y,\eps):=\iota_{\mathcal A}(X,\eps)$ and $Sz(Y):=\iota_{\mathcal A}(X)$. It is important to note that $Sz(Y)$ is an isomorphic invariant for $Y$, but not for $X$, which explains the notation. Since weak$^*$ open sets in $Y^*$ are weakly open, we have that $\Phi(X)\le Sz(Y)$. 
    \item If $X=Y^*$ is a dual Banach space and $\mathcal A$ is the collection of weak$^*$ open slices of $Y^*$, the index $\iota_{\mathcal A}(X)$ is called the \emph{weak$^*$ dentability index of $Y$} and denoted $Dz(Y)$. Clearly $D(X^*)\le Dz(Y)$ and $Sz(Y)\le Dz(Y)$. 
\end{enumerate}

\medskip The following elementary proposition is well known for the dentability or Szlenk indices. We state  and quickly prove it in the general setting described above.

\begin{prop}\label{prop:conv_comb} Let $X$ be a Banach space. Assume that $\mathcal A$ is a collection of open subsets of $X$ that is stable under dilations and translations and denote $[C]'_\eps$ the derivation associated with $\mathcal A$.  Then for any $\eps \in (0,1]$, any $\lambda \in (0,1)$ and any ordinal $\alpha$:
$$\lambda [B_X]^\alpha_\eps + (1-\lambda)B_X\subset [B_X]^\alpha_{\lambda\eps}.$$
\end{prop}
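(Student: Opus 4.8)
The plan is to prove the inclusion by transfinite induction on $\alpha$, exploiting the stability of $\mathcal A$ under dilations and translations. First I would set up the convention $[B_X]^\alpha_\eps$ for the derivation and note that the statement is vacuous at $\alpha=0$ since both sides equal $\lambda B_X + (1-\lambda)B_X = B_X \supseteq B_X$; more precisely $\lambda B_X + (1-\lambda) B_X = B_X = [B_X]^0_{\lambda\eps}$. The limit-ordinal step is routine: if $\beta$ is a limit ordinal and the inclusion holds for all $\alpha<\beta$, then since Minkowski sum with the fixed convex set $(1-\lambda)B_X$ commutes with arbitrary intersections of a decreasing family (here one only needs the trivial direction $\lambda\bigcap_\alpha C_\alpha + (1-\lambda)B_X \subseteq \bigcap_\alpha(\lambda C_\alpha + (1-\lambda)B_X)$), we get $\lambda[B_X]^\beta_\eps + (1-\lambda)B_X = \lambda\bigcap_{\alpha<\beta}[B_X]^\alpha_\eps + (1-\lambda)B_X \subseteq \bigcap_{\alpha<\beta}\big(\lambda[B_X]^\alpha_\eps + (1-\lambda)B_X\big) \subseteq \bigcap_{\alpha<\beta}[B_X]^\alpha_{\lambda\eps} = [B_X]^\beta_{\lambda\eps}$.

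The heart of the argument is the successor step, and this is where I expect the only real work. Assume $\lambda[B_X]^\alpha_\eps + (1-\lambda)B_X \subseteq [B_X]^\alpha_{\lambda\eps}$; I want to show $\lambda[B_X]^{\alpha+1}_\eps + (1-\lambda)B_X \subseteq [B_X]^{\alpha+1}_{\lambda\eps}$. Fix $x \in [B_X]^{\alpha+1}_\eps$ and $y \in B_X$, and set $z = \lambda x + (1-\lambda)y$. By the induction hypothesis $z \in [B_X]^\alpha_{\lambda\eps}$, so it suffices to show that $z$ survives the derivation at level $\lambda\eps$, i.e. that $z \notin A$ for every $A \in \mathcal A$ with $\diam\big(A \cap [B_X]^\alpha_{\lambda\eps}\big) < \lambda\eps$. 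Suppose toward a contradiction that $z \in A$ for such an $A$. The idea is to translate and dilate $A$ to produce a test set around $x$: consider $A' := \lambda^{-1}\big(A - (1-\lambda)y\big)$, which belongs to $\mathcal A$ by the assumed stability. Then $x \in A'$, and for any $u, v \in A' \cap [B_X]^\alpha_\eps$ we have $\lambda u + (1-\lambda)y, \lambda v + (1-\lambda)y \in A$ and, by the induction hypothesis, these two points also lie in $[B_X]^\alpha_{\lambda\eps}$; hence $\lambda\|u-v\| = \|(\lambda u + (1-\lambda)y) - (\lambda v + (1-\lambda)y)\| \le \diam\big(A \cap [B_X]^\alpha_{\lambda\eps}\big) < \lambda\eps$, so $\|u-v\| < \eps$. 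This gives $\diam\big(A' \cap [B_X]^\alpha_\eps\big) \le \eps$. If the inequality were strict we would immediately contradict $x \in [B_X]^{\alpha+1}_\eps$; to get strictness, I would instead run the estimate with $\eps$ replaced by a slightly smaller $\eps' = \diam(A\cap[B_X]^\alpha_{\lambda\eps})/\lambda < \eps$ throughout (the induction hypothesis holds for all $\eps\in(0,1]$ so in particular for $\eps'$, using $[B_X]^\alpha_{\eps} \subseteq [B_X]^\alpha_{\eps'}$ since derivations are monotone nonincreasing in $\eps$), obtaining $\diam\big(A' \cap [B_X]^\alpha_{\eps'}\big) \le \lambda\eps'/\lambda < \eps'$... and then one must be slightly careful about which level of derivation one lands in.

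The cleanest way to avoid the strict-versus-nonstrict nuisance is the following rephrasing, which I would actually write up: to show $z \in [B_X]^{\alpha+1}_{\lambda\eps}$, I use that $z \in [B_X]^\alpha_{\lambda\eps}$ and show directly that no $A \in \mathcal A$ containing $z$ has $\diam(A \cap [B_X]^\alpha_{\lambda\eps}) < \lambda\eps$. Given such an $A$ with $\diam(A\cap[B_X]^\alpha_{\lambda\eps}) =: \delta < \lambda\eps$, form $A' = \lambda^{-1}(A-(1-\lambda)y) \in \mathcal A$; then $x \in A'$ and the computation above shows $\diam\big(A' \cap [B_X]^\alpha_\eps\big) \le \delta/\lambda < \eps$, since every pair $u,v \in A'\cap[B_X]^\alpha_\eps$ maps into $A \cap [B_X]^\alpha_{\lambda\eps}$ under $t\mapsto \lambda t + (1-\lambda)y$ by the induction hypothesis. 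This contradicts $x \in [B_X]^{\alpha+1}_\eps = [B_X]^\alpha_\eps \setminus \bigcup\{A \in \mathcal A : \diam(A\cap[B_X]^\alpha_\eps) < \eps\}$. Hence $z \in [B_X]^{\alpha+1}_{\lambda\eps}$, completing the successor step and the induction. The main obstacle, such as it is, is purely bookkeeping: making sure the dilated-translated set $A'$ genuinely lies in $\mathcal A$ (guaranteed by hypothesis) and that the diameter comparison is set up with the right (non-strict) inequality so that the contradiction with membership in the $(\alpha+1)$-th derivative is clean; no analytic input beyond convexity of $B_X$ and affineness of $t \mapsto \lambda t + (1-\lambda)y$ is needed.
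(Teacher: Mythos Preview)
Your proof is correct and follows essentially the same transfinite induction as the paper: the base and limit cases are immediate, and in the successor step you pull back a test set $A \ni z$ via $A' = \lambda^{-1}(A-(1-\lambda)y)$ and use the induction hypothesis to compare diameters, exactly as the paper does with its $W = \frac{1}{\lambda}V - \frac{1-\lambda}{\lambda}y$. The only difference is cosmetic: you argue by contradiction (assuming a small-diameter $A$ exists) while the paper argues directly (showing every $V \ni z$ has $\diam(V\cap [B_X]^\alpha_{\lambda\eps}) \ge \lambda\eps$), which also spares you the initial detour about strict versus non-strict inequalities.
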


\begin{proof} The proof is an easy transfinite induction. Clearly, the statement is true for $\alpha=0$ and passes to limit ordinal. So assume it is true for $\alpha$ and let us prove it for $\alpha+1$. Let $z=\lambda x+(1-\lambda)y$, with $x\in [B_X]^{\alpha+1}_\eps$ and $y\in B_X$. By the induction hypothesis, $z\in [B_X]^\alpha_{\lambda\eps}$ and we need to show that $z\in [B_X]^{\alpha+1}_{\lambda\eps}$. So, let $V \in \mathcal A$ such that $z\in V$. Then $W=\frac{1}{\lambda}V-\frac{1-\lambda}{\lambda}y \in \mathcal A$ and $x\in W$. Since $x \in [B_X]^{\alpha+1}_\eps$, $\diam(W \cap [B_X]^{\alpha}_\eps)\ge \eps$. Since 
$\lambda (W \cap [B_X]^{\alpha}_\eps)+(1-\lambda)y \subset  V\cap [B_X]^\alpha_{\lambda\eps}$, we deduce that $\diam (V\cap [B_X]^\alpha_{\lambda\eps}) \ge \lambda \eps$, which shows that $z\in [B_X]^{\alpha+1}_{\lambda\eps}$ and concludes the proof.   
\end{proof}

We also recall a result on the weak fragmentability index of $\ell_1$-sums. Let $(X_n)_{n=1}^\infty$ be a sequence of Banach spaces. We denote 
$$\Big(\sum_{n \in \N} X_n\Big)_{\ell_1}=\Big\{(x_n)_{n=1}^\infty \in \prod_{n=1}^\infty X_n,\ \sum_{n=1}^\infty \|x_n\|_{X_n} <\infty\Big\},$$
equipped with the norm $\|(x_n)_{n=1}^\infty\|=\sum_{n=1}^\infty \|x_n\|_{X_n}$. We have the following (see Proposition 3.9 in \cite{Basset}).

\begin{prop}
    \label{prop: fragmentabilty index of l_1 sums}
Let $(X_n)_{n=1}^\infty$ be a sequence of Banach spaces and assume that there exists an ordinal $\alpha$ such that $\Phi(X_n)\le \alpha$ for all $n\in \N$. Then 
$$\Phi\Big(\Big(\sum_{n \in \N} X_n\Big)_{\ell_1}\Big)\le \alpha\cdot\omega,$$
where $\omega$ is the first infinite ordinal.
\end{prop}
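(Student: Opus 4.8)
The plan is to fix $\eps>0$ and prove that $\Phi(X,\eps)<\alpha\cdot\omega$, where $X:=\big(\sum_{n\in\N}X_n\big)_{\ell_1}$; since $\Phi(X)=\sup_{\eps>0}\Phi(X,\eps)$, this yields $\Phi(X)\le\alpha\cdot\omega$. Everything reduces to a \emph{ball-shrinking estimate}: there is an ordinal $\beta=\beta(\eps)<\alpha\cdot\omega$, non-increasing in $\eps$, with
\[
[B_X]_\eps^{\beta}\ \subseteq\ \Big(1-\tfrac{\eps}{8}\Big)B_X .
\]
Granting this, one iterates using the dilation-homogeneity of the derivation, $[sC]_\eps^{\gamma}=s\,[C]_{\eps/s}^{\gamma}$ (the defining family of open sets being dilation-stable; cf.\ also Proposition~\ref{prop:conv_comb}): a short induction on $k$ gives $[B_X]_\eps^{k\beta}\subseteq(1-\tfrac{k\eps}{8})B_X$, which is empty once $k>8/\eps$, so $\Phi(X,\eps)\le\beta\cdot\lceil 8/\eps\rceil<\alpha\cdot\omega$, as wanted.

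For the estimate, fix $x\in B_X$ with $\|x\|>1-\tfrac{\eps}{8}$ and choose a finite $S\subseteq\N$ with $\sum_{n\in S}\|x_n\|_{X_n}>1-\tfrac{\eps}{8}$. The function $y\mapsto\sum_{n\in S}\|y_n\|_{X_n}$ is weakly lower semicontinuous, so $V:=\{y\in X:\sum_{n\in S}\|y_n\|>1-\tfrac{\eps}{8}\}$ is a weak neighbourhood of $x$, and $\sum_{n\notin S}\|y_n\|<\tfrac{\eps}{8}$ for every $y\in V\cap B_X$. Write $Y_S:=\big(\sum_{n\in S}X_n\big)_{\ell_1}$ and let $P_S\colon X\to Y_S$ be the coordinate projection. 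The core is the transfinite induction
\[
y\in[B_X]_\eps^{\gamma}\cap V\ \Longrightarrow\ P_S y\in[B_{Y_S}]_{\eps/2}^{\gamma},
\]
the base and limit stages being immediate. At a successor stage, if $P_Sy\notin[B_{Y_S}]_{\eps/2}^{\gamma+1}$, pick a weak-open $A\subseteq Y_S$ with $P_Sy\in A$ and $\diam\big(A\cap[B_{Y_S}]_{\eps/2}^{\gamma}\big)<\tfrac{\eps}{2}$; then $W:=P_S^{-1}(A)\cap V$ is a weak neighbourhood of $y$ in $X$, and any $z,z'\in W\cap[B_X]_\eps^{\gamma}$ satisfy $\|P_Sz-P_Sz'\|<\tfrac{\eps}{2}$ (their images lie in $A\cap[B_{Y_S}]_{\eps/2}^{\gamma}$ by the inductive hypothesis) as well as $\sum_{n\notin S}\|z_n-z'_n\|<\tfrac{\eps}{4}$ (both lie in $V\cap B_X$), hence $\|z-z'\|_X<\eps$; thus $W$ exhibits $y\notin[B_X]_\eps^{\gamma+1}$. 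Evaluating the implication at $\gamma:=\Phi(Y_S,\eps/2)$, for which $[B_{Y_S}]_{\eps/2}^{\gamma}=\emptyset$, gives $[B_X]_\eps^{\Phi(Y_S,\eps/2)}\cap V=\emptyset$, so $x\notin[B_X]_\eps^{\Phi(Y_S,\eps/2)}$. Since $x$ was an arbitrary point of norm $>1-\tfrac{\eps}{8}$, the estimate holds with $\beta:=\sup\{\Phi(Y_F,\eps/2):F\subseteq\N\text{ finite}\}$.

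The remaining point — which I expect to be the genuinely delicate one — is to check that $\beta<\alpha\cdot\omega$, i.e.\ that $\sup\{\Phi(Y_F,\eps/2):F\text{ finite}\}<\alpha\cdot\omega$ for each fixed $\eps$; finiteness of $F$ is essential, since for $F=\N$ this supremum is $\Phi(X,\eps/2)$ itself. I would derive it from the fact that \emph{a finite $\ell_1$-sum does not raise the weak-fragmentability index past the maximum of its summands'}, i.e.\ $\Phi(Y_F)\le\alpha$ whenever $\Phi(X_n)\le\alpha$ for all $n\in F$ (which yields $\beta\le\alpha$). The heuristic is that, because $\|y-y'\|_{Y_F}=\sum_{n\in F}\|y_n-y'_n\|_{X_n}$, the derivation ``budget'' is distributed among the finitely many coordinates rather than accumulated across them — the accumulation across ever-later coordinates being precisely what produces the extra factor $\omega$ in the infinite case. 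A proof should go by transfinite induction tracking, for each coordinate, how deeply $y_n$ has been peeled inside its own ball $B_{X_n}$ (with parameter rescaled by $|F|$); the awkward part is the successor step, where one must combine product weak neighbourhoods $\prod_{n\in F}U_n$ with the observation that small diameter in the $\ell_1$-norm forces small diameter in each coordinate. Making ``peel all finitely many coordinate balls simultaneously'' precise inside the coupled ball $B_{Y_F}$, where constraining one coordinate leaves the others free, is the main obstacle of the whole argument.
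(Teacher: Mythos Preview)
The paper does not give its own proof of this proposition; it is quoted from \cite[Proposition~3.9]{Basset}. So there is no in-paper argument to compare against directly. That said, your overall scheme---project onto a finite block $Y_S$, run a transfinite induction transferring $\sigma_\eps^\gamma(B_X)\cap V$ into $\sigma_{\eps/2}^\gamma(B_{Y_S})$, deduce a ball-shrinking inclusion, then iterate by homogeneity---is precisely the standard one, and it is in fact the same mechanism the present paper deploys in Lemma~\ref{l:PhiVsDForEllOneSums} in the $L_2$ setting. Your induction and your iteration are both correct (two cosmetic points: the iterated ordinal is $\beta\cdot k$, not $k\beta$; and your linear shrinking $1-k\eps/8$ is indeed valid, not merely $(1-\eps/8)^k$, because you correctly note that $\beta(\eps)$ is non-increasing so the same $\beta$ works at the rescaled parameter $\eps/r$).

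The one genuine gap is exactly where you locate it: the inequality $\Phi(Y_F)\le\alpha$ for finite $F$. Your last paragraph is a heuristic, not a proof, and you overestimate the obstacle. The coupling inside $B_{Y_F}$ that worries you disappears in one line: $B_{Y_F}\subseteq\prod_{n\in F}B_{X_n}$, and since the $\sigma$-derivation is monotone in the underlying set, it suffices to control the box $\prod_{n\in F}B_{X_n}$. For a box in an $\ell_1$-sum one has the elementary inclusion
\[
\sigma_\eta'\Big(\prod_{n\in F}C_n\Big)\ \subseteq\ \bigcup_{m\in F}\Big(\prod_{n\neq m}C_n\Big)\times\sigma_{\eta/|F|}'(C_m),
\]
obtained by taking the product of witnessing weak neighbourhoods (diameters add in $\ell_1$). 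Iterating this exactly as in the computation of Szlenk indices of finite direct sums shows that $\sigma_\eta^\gamma\big(\prod_{n\in F}B_{X_n}\big)=\emptyset$ once $\gamma$ exceeds the natural (Hessenberg) sum of the ordinals $\Phi(X_n,\eta/|F|)$. Assuming without loss of generality that $\alpha=\omega^\xi$ (the only values $\Phi$ takes), each summand is $<\omega^\xi$ and $\omega^\xi$ is closed under finite natural sums, whence $\Phi(Y_F,\eta)<\omega^\xi$ for every $\eta$, i.e.\ $\Phi(Y_F)\le\alpha$. With this lemma in hand, your $\beta\le\alpha$ and the rest of your argument is complete.
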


We conclude with a very elementary lemma that we will use repeatedly.

\begin{lemma}\label{lem:midpointtrick}
Let $C$ be a closed convex set in a Banach space $X$ and $\eps>0$. Assume that $x,y\in C$ are such that $\|x-y\|\ge 2\eps$. Then $\frac{x+y}{2} \in d_\eps(C)$.
\end{lemma}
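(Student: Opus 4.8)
The plan is to unwind the definition of the slice derivation $d_\eps=[\,\cdot\,]'_\eps$ and, for an arbitrary open slice through the midpoint $z=\frac{x+y}{2}$, to produce two points of $C$ lying in that slice which are at distance at least $\eps$. Since $C$ is convex and $x,y\in C$, we first note $z\in C$, so it only remains to verify that there is no open slice $S$ with $z\in S$ and $\diam(S\cap C)<\eps$; this is exactly what prevents $z$ from being deleted by the derivation.

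So I would fix an open slice $S=\set{u\in X:\ x^*(u)>a}$, with $x^*\in X^*$ and $a\in\R$, and assume $z\in S$, that is $\frac{x^*(x)+x^*(y)}{2}>a$. At least one of $x^*(x)$, $x^*(y)$ is $\ge$ their average; after possibly swapping the names of $x$ and $y$ we may assume $x^*(x)\ge\frac{x^*(x)+x^*(y)}{2}>a$, so that $x\in S$ too. Then both $x$ and $z$ belong to $S\cap C$, and $\norm{x-z}=\frac12\norm{x-y}\ge\eps$, hence $\diam(S\cap C)\ge\eps$.

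As this holds for every open slice containing $z$, the point $z$ is never removed, i.e. $z\in d_\eps(C)$, which is the claim. There is essentially no obstacle in this argument; the only point worth stating carefully is that $d_\eps(C)$ is defined via slices $S\in\mathcal A$ of the ambient space $X$ intersected with $C$, and the computation above respects exactly that convention. This lemma will be invoked repeatedly as the elementary engine producing lower bounds for the dentability index: whenever two far-apart points can be reached inside some iterated derivative, their midpoint lies in the next derivative.
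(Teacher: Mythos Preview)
Your proof is correct and follows exactly the same idea as the paper's own argument: any open slice through the midpoint must contain at least one of $x,y$, giving two points of $S\cap C$ at distance $\geq \eps$. You have simply spelled out in more detail the reason the slice swallows one endpoint (the linearity of $x^*$), but otherwise the proofs coincide.
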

\begin{proof} Let $S$ be an open slice of $X$ containing $\frac{x+y}{2}$. Then $S$ contains either $x$ or $y$, so its diameter is at least $\eps$. It follows that $\frac{x+y}{2} \in d_\eps(C)$.
\end{proof}

\subsection{The Radon-Nikod\'{y}m and the Point of Continuity properties}
We refer the reader to the book by J. Diestel and J.J. Uhl \cite{DiestelUhl} for all the necessary background and historical references on the Radon-Nikod\'{y}m Property. Let us just recall that a Banach space $X$ has the \emph{Radon-Nikod\'{y}m Property} (RNP in short) if for every finite measure space $(\Omega,\Sigma,\mu)$ and every vector measure $F:\Sigma \to X$ that is absolutely continuous with respect to $\mu$, there exists $f\in L_1(\mu,X)$ such that $F(A)=\int_A f\,d\mu$ for all $A\in \Sigma$. In this paper we shall focus on the geometric characterization of this property in terms of the dentability of bounded subsets of $X$. A bounded subset $C$ of a Banach space $X$ is said to be \emph{dentable} if for every $\eps>0$, there exists an open slice $S$ in $X$ such that $S\cap C \neq \emptyset$ and $\diam (S \cap C)<\eps$. Then a Banach space $X$ has the Radon-Nikod\'{y}m Property if and only if every non empty bounded subset of $X$ is dentable (see Theorem V.7 in \cite{DiestelUhl} for instance). Using this characterization, the following is then easy to see.

\begin{prop}\label{prop:RNP_D(X)} Let $X$ be a separable Banach space. Then $X$ has the RNP if and only if $D(X)<\omega_1$, where $\omega_1$ is the first uncountable ordinal. 
\end{prop}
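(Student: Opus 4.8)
The plan is to prove the equivalence in two directions, using the characterization that $X$ has the RNP if and only if every non-empty bounded subset of $X$ is dentable.

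First I would prove the implication $D(X)<\omega_1 \implies$ RNP. This direction does not need separability. Suppose $D(X)<\omega_1$, i.e.\ $\sup_{\eps>0} D(X,\eps)$ is a countable ordinal, so in particular $D(X,\eps)<\infty$ for every $\eps>0$. By the remark in the excerpt immediately after the definition of the peeling scheme, $\iota_{\mathcal A}(X)<\infty$ (here $\mathcal A$ is the class of open slices) is equivalent to: for every bounded subset $C$ of $X$ and every $\eps>0$ there exists an open slice $S$ with $S\cap C\neq\emptyset$ and $\diam(S\cap C)<\eps$. That is precisely the statement that every bounded subset of $X$ is dentable, hence $X$ has the RNP by Theorem V.7 of \cite{DiestelUhl}.

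For the converse, assume $X$ is separable and has the RNP; I must show $D(X)<\omega_1$. Fix $\eps>0$ and consider the derived sets $d_\eps^\alpha(B_X)$. They form a decreasing transfinite chain of closed bounded subsets of $X$. Since $X$ has the RNP, $B_X$ is dentable, and more to the point every non-empty bounded subset is dentable; in particular, whenever $d_\eps^\alpha(B_X)\neq\emptyset$ there is an open slice $S$ with $S\cap d_\eps^\alpha(B_X)\neq\emptyset$ and $\diam(S\cap d_\eps^\alpha(B_X))<\eps$, so the derivation strictly decreases the set: $d_\eps^{\alpha+1}(B_X)\subsetneq d_\eps^\alpha(B_X)$ as long as the set is non-empty. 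Thus the chain is strictly decreasing until it reaches $\emptyset$. The key point is that a strictly decreasing transfinite chain of subsets of a separable metric space must stabilize at a countable stage: here I would use separability to argue that the chain cannot have length $\omega_1$. One clean way is to pick, for each successor step $\alpha\mapsto\alpha+1$ where $d_\eps^\alpha(B_X)\ne\emptyset$, a point in $d_\eps^\alpha(B_X)\setminus d_\eps^{\alpha+1}(B_X)$; since $d_\eps^{\alpha+1}(B_X)$ is closed, each such point has a neighbourhood missing $d_\eps^{\alpha+1}(B_X)$ and hence missing all later derived sets, so these chosen points are ``isolated from the future'' and, by separability (second countability) of $X$, there can be only countably many of them. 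Therefore $d_\eps^\alpha(B_X)=\emptyset$ for some countable $\alpha$, i.e.\ $D(X,\eps)<\omega_1$ for every $\eps>0$.

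Finally I would pass from ``$D(X,\eps)<\omega_1$ for each $\eps$'' to ``$D(X)=\sup_{\eps>0}D(X,\eps)<\omega_1$''. Since $D(X,\eps)$ is non-decreasing as $\eps\downarrow 0$, it suffices to take the supremum over $\eps=1/n$, $n\in\N$, and a countable supremum of countable ordinals is countable. This gives $D(X)<\omega_1$ and completes the proof. The main obstacle is the separability argument in the converse direction — namely turning a strictly decreasing transfinite chain of closed sets into a countable bound on its length — but this is a standard Szlenk-type argument that separability handles via second countability.
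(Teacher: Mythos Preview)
Your proposal is correct. The paper does not actually spell out a proof of this proposition --- it merely states that it is ``easy to see'' from the dentability characterization of the RNP --- and your argument is precisely the standard fleshing-out of that remark: use the equivalence $\iota_{\mathcal A}(X)<\infty \Leftrightarrow$ fragmentability by slices for one direction, and for the converse combine strict decrease of the derivation (from dentability of every bounded set) with the second-countability injection $\alpha\mapsto U_\alpha$ into a countable base, then take a countable supremum over $\eps=1/n$.
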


We now recall that a Banach space $X$ has the \emph{Point of Continuity Property} (PCP in short) if for every non empty bounded closed subset $C$ of $X$ the identity map from $(C,w)$ to $(C,\|\ \|)$ is continuous at some point $x\in C$ (where $w$ stands for the weak topology on $X$). A Banach space $X$ has PCP  if and only if every non empty bounded closed subset $C$ of $X$ is weakly fragmentable (see \cite{Phelps} for instance), meaning that for every $\eps>0$, there exists a weakly open subset $V$ of $X$ such that $V\cap C \neq \emptyset$ and $\diam (V \cap C)<\eps$. The following is then classical. We refer to \cite[Proposition 2.5]{Basset} for a detailed proof. 

\begin{prop} Let $X$ be a separable Banach space. Then $X$ has the PCP if and only if $\Phi(X)<\omega_1$. 
\end{prop}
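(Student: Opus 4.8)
The plan is to deduce both implications directly from the geometric characterization of PCP recalled just above, namely that $X$ has PCP if and only if every non-empty bounded norm-closed subset $C$ of $X$ is weakly fragmentable. Two elementary facts about the derivation $[\cdot]'_\eps$ associated with the collection $\mathcal A$ of weakly open sets will be used repeatedly. First, since the weak topology is coarser than the norm topology, each iterate $[C]^\alpha_\eps = \sigma^\alpha_\eps(C)$ of a bounded norm-closed set $C$ is again bounded and norm-closed: at successor steps we remove a weakly open set, i.e. intersect with a weakly (hence norm) closed set, and at limit steps we intersect. Second, the derivation is monotone in the set: if $C\subseteq D$ then $\{A\in\mathcal A:\diam(A\cap D)<\eps\}\subseteq\{A\in\mathcal A:\diam(A\cap C)<\eps\}$, so $[C]'_\eps\subseteq[D]'_\eps$, and by transfinite induction $[C]^\alpha_\eps\subseteq[D]^\alpha_\eps$ for every ordinal $\alpha$.

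For the implication $\Phi(X)<\omega_1\Rightarrow\mathrm{PCP}$, fix a non-empty bounded norm-closed set $C$. By stability of $\mathcal A$ under dilations we may assume $C\subseteq B_X$. I claim $C$ is weakly fragmentable. If not, there is $\eps_0>0$ such that every weakly open $A$ with $\diam(A\cap C)<\eps_0$ must satisfy $A\cap C=\emptyset$; hence $\bigcup\{A\in\mathcal A:\diam(A\cap C)<\eps_0\}$ is disjoint from $C$, so $[C]'_{\eps_0}=C$ and therefore $[C]^\alpha_{\eps_0}=C\ne\emptyset$ for every ordinal $\alpha$. On the other hand, $\Phi(X,\eps_0)\le\Phi(X)<\omega_1$ gives $[B_X]^{\Phi(X,\eps_0)}_{\eps_0}=\emptyset$, whence $[C]^{\Phi(X,\eps_0)}_{\eps_0}\subseteq[B_X]^{\Phi(X,\eps_0)}_{\eps_0}=\emptyset$ by monotonicity, a contradiction. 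So $C$ is weakly fragmentable and $X$ has PCP.

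For the implication $\mathrm{PCP}\Rightarrow\Phi(X)<\omega_1$, fix $\eps>0$. Whenever $[B_X]^\alpha_\eps\ne\emptyset$ it is bounded and norm-closed, hence weakly fragmentable by PCP, so there is a weakly open $A$ with $\emptyset\ne A\cap[B_X]^\alpha_\eps$ and $\diam(A\cap[B_X]^\alpha_\eps)<\eps$; this point is removed, so $[B_X]^{\alpha+1}_\eps\subsetneq[B_X]^\alpha_\eps$. A strictly decreasing transfinite sequence of sets must terminate, so $[B_X]^\alpha_\eps=\emptyset$ for some ordinal; let $\alpha_0(\eps)$ be the least such. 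To see $\alpha_0(\eps)<\omega_1$, use separability of $X$ to fix a countable base $\{W_n\}_{n\in\N}$ of its norm topology. For each $\alpha<\alpha_0(\eps)$, the non-empty set $[B_X]^\alpha_\eps\setminus[B_X]^{\alpha+1}_\eps$ is relatively weakly open, hence relatively norm-open, in $[B_X]^\alpha_\eps$, so we may choose $n(\alpha)$ with $\emptyset\ne W_{n(\alpha)}\cap[B_X]^\alpha_\eps$ and $W_{n(\alpha)}\cap[B_X]^{\alpha+1}_\eps=\emptyset$. If $\alpha<\beta<\alpha_0(\eps)$ then $[B_X]^\beta_\eps\subseteq[B_X]^{\alpha+1}_\eps$, so $W_{n(\alpha)}$ misses $[B_X]^\beta_\eps$ whereas $W_{n(\beta)}$ meets it; hence $\alpha\mapsto n(\alpha)$ is injective into $\N$ and $\alpha_0(\eps)=\Phi(X,\eps)<\omega_1$. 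Since $\Phi(X,\eps)$ is non-increasing in $\eps$, $\Phi(X)=\sup_{n\ge1}\Phi(X,1/n)$ is a countable supremum of countable ordinals, so $\Phi(X)<\omega_1$.

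The monotonicity and closedness bookkeeping is routine, and the only step that genuinely requires care is the final countable-base injectivity argument establishing $\alpha_0(\eps)<\omega_1$ in the second implication; this is also the unique place where separability of $X$ is used. The rest is a direct unwinding of the definitions of the derivation and of weak fragmentability.
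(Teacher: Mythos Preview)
Your proof is correct. The paper does not actually supply a proof of this proposition; it merely states that the result is classical and refers the reader to \cite[Proposition~2.5]{Basset} for details. Your argument---contrapositive for one direction, and for the other the strict decrease of the derivation under PCP combined with the standard Cantor--Bendixson style injection into a countable base to bound each $\Phi(X,\eps)$ below $\omega_1$---is precisely the expected route, and is in all likelihood what the cited reference contains.
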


We conclude this section by recalling that a dual Banach space $X^*$ has RNP if and only if $X$ is an Asplund space if and only if every separable subspace of $X$ has a separable dual (see  \cite[Section III.3]{DiestelUhl} or \cite{DevilleGodefroyZizler}). 
The following will be relevant for this work (see Theorem 1 in \cite{Lancien2006}). 

\begin{prop} Let $X$ be a separable Banach space. Then the following are equivalent
\begin{enumerate}[(i)]
    \item $X^*$ has the RNP,
    \item $X^*$ is separable,
    \item $Sz(X)<\omega_1$. 
\end{enumerate}
    
\end{prop}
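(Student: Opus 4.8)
The plan is to prove the cycle (i) $\Leftrightarrow$ (ii), then (ii) $\Rightarrow$ (iii), then (iii) $\Rightarrow$ (ii), using the characterization of Asplund spaces recalled just above together with the metrizability of $(B_{X^*},w^*)$ when $X$ is separable. The equivalence (i) $\Leftrightarrow$ (ii) is almost immediate: if $X^*$ has the RNP then $X$ is Asplund, hence every separable subspace of $X$ has separable dual, and applying this to the separable subspace $X$ itself gives that $X^*$ is separable; conversely, if $X^*$ is separable then the dual of every subspace $Y\subseteq X$ is separable, being a quotient of $X^*$ via the restriction map, so $X$ is Asplund and $X^*$ has the RNP (alternatively one may cite directly that a separable dual space has the RNP). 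The separability of $X$ is used only to turn "$X$ Asplund" into "$X^*$ separable".

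For (ii) $\Rightarrow$ (iii), fix $\eps>0$. Since $X$ is separable, $(B_{X^*},w^*)$ is compact metrizable, hence second countable, so a standard Cantor--Bendixson-type argument shows that the non-increasing transfinite family $\big(s_\eps^\alpha(B_{X^*})\big)_\alpha$ of weak$^*$-compact sets stabilizes at some countable ordinal $\gamma$; write $K:=s_\eps^\gamma(B_{X^*})$, so that $[K]'_\eps=K$, which means that every nonempty relatively weak$^*$-open subset of $K$ has norm-diameter at least $\eps$. I claim $K=\emptyset$. If not, I would build a dyadic tree $(U_t)_{t\in 2^{<\omega}}$ of nonempty relatively weak$^*$-open subsets of $K$ as follows: given $U_t$, choose $a,b\in U_t$ with $\|a-b\|>\eps/2$ and $\varphi\in X$ with $\|\varphi\|\le 1$ and $\langle a-b,\varphi\rangle>\eps/2$; letting $c$ be the midpoint of $\langle a,\varphi\rangle$ and $\langle b,\varphi\rangle$, the weak$^*$-open half-spaces $\{\langle\cdot,\varphi\rangle>c+\eps/8\}$ and $\{\langle\cdot,\varphi\rangle<c-\eps/8\}$ separate $a$ from $b$ with a numerical gap of $\eps/4$, and after intersecting with $U_t$ and shrinking (using regularity of the compact weak$^*$ topology) one gets $U_{t,0}\ni a$ and $U_{t,1}\ni b$ whose weak$^*$-closures stay inside $U_t$ and inside the respective half-spaces. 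Then the weak$^*$-closures are nested along branches, and any point of $\overline{U_{t,0}}^{w^*}$ differs in norm by at least $\eps/4$ from any point of $\overline{U_{t,1}}^{w^*}$. For each $\sigma\in 2^\omega$, weak$^*$-compactness provides $x_\sigma^*\in\bigcap_n\overline{U_{\sigma|n}}^{w^*}\subseteq K$, and the branching property forces $\|x_\sigma^*-x_\tau^*\|\ge\eps/4$ for $\sigma\neq\tau$, yielding a continuum-sized $(\eps/4)$-separated subset of $X^*$ and contradicting (ii). Hence $K=\emptyset$, so $Sz(X,\eps)\le\gamma<\omega_1$; as this holds for every $\eps>0$ and $Sz(X)=\sup_n Sz(X,1/n)$, we get $Sz(X)<\omega_1$.

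For (iii) $\Rightarrow$ (ii), fix $n$ and pick $\alpha_n<\omega_1$ with $s_{1/n}^{\alpha_n}(B_{X^*})=\emptyset$. Then $B_{X^*}=\bigcup_{\beta<\alpha_n}\big(s_{1/n}^\beta(B_{X^*})\setminus s_{1/n}^{\beta+1}(B_{X^*})\big)$, and by definition of the derivation each term is covered by relatively weak$^*$-open subsets of the second countable --- hence Lindel\"of --- space $s_{1/n}^\beta(B_{X^*})$ of norm-diameter $<1/n$, so it is covered by countably many of them. Since $\alpha_n$ is countable, $B_{X^*}$ is covered by countably many sets of norm-diameter $<1/n$; picking one point in each nonempty one gives a countable $(1/n)$-net $F_n$ of $B_{X^*}$, and $\bigcup_n F_n$ is then a countable norm-dense subset of $B_{X^*}$, so $X^*$ is separable.

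The step I expect to be the main obstacle is the dyadic tree construction in (ii) $\Rightarrow$ (iii): the subtlety is not the weak$^*$ branching itself but performing it along functionals with a \emph{fixed} numerical gap, so that the branches of the tree terminate at points separated \emph{in norm} and not merely in the weak$^*$ topology. Without this refinement one would only obtain a weak$^*$-discrete rather than a norm-separated family, which would not contradict separability of $X^*$.
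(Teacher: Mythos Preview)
The paper does not give its own proof of this proposition; it simply records the statement with a reference to \cite{Lancien2006}. Your argument is correct and follows the classical route: (i)$\Leftrightarrow$(ii) via the Asplund characterization recalled just before the statement, (iii)$\Rightarrow$(ii) by using the fragmentation to cover $B_{X^*}$ by countably many sets of small norm-diameter (second countability of $(B_{X^*},w^*)$ giving the Lindel\"of property), and (ii)$\Rightarrow$(iii) by the dyadic tree construction in the stable set $K=s_\eps^\gamma(B_{X^*})$. Your handling of the tree step is accurate: the use of a separating $\varphi\in B_X$ with a fixed numerical gap $\eps/4$ at each node, together with regularity of the compact Hausdorff weak$^*$ topology to nest closures, is precisely what forces the endpoints $x_\sigma^*$ to be $\eps/4$-separated \emph{in norm}; you are right that this is the crux, since mere weak$^*$ separation of branches would not contradict norm separability of $X^*$.
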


\subsection{The countably branching diamond graphs}\label{sec:diamonds}

We define $D_1^\omega=\set{t_1,b_1} \cup \set{x_i:0\leq i< \omega}$. 
We consider the complete bipartite graph on these vertices (edges connect every vertex in the first set to every vertex in the second set). The edge $\set{t_1,x_i}$ will be denoted $(i,+)$ and the edge $\set{b_1,x_i}$ will be denoted $(i,-)$. We define $d_1 = d_1^\omega$ on $D_1^\omega$ as the shortest path metric corresponding to this graph structure, i.e. $d_1(t_1,b_1)=2$.

When $D_n^\omega$ has been defined we define $D_{n+1}^\omega$ as the graph (together with its shortest path metric $d_{n+1}$) where each edge in $D_1^\omega$ has been replaced by $D_n^\omega$. This copy is called $D_n^{(j,\pm)}$ where $(j,\pm)$ corresponds to  the edge being replaced.
The ``$2+\omega$'' vertices of this original $D_1^\omega$ are denoted $t_{n+1}$, $b_{n+1}$, $x^i_{n+1}$ (see Figure~\ref{fig1}). Thus $d_n(t_n, b_n) = 2^n$.

\medskip
The spaces $D_n^\omega$, $n\in \Natural$, are countable uniformly discrete metric spaces. The family $(D_n^\omega)_{n \in \N}$ is called the family of \emph{countably branching diamond graphs}. For further details on these graphs, we refer the reader to \cite{BCDKRSZ}. 

\medskip
Diamond graphs play an important role in Banach space theory. For instance, the bi-Lipschitz embeddability of the binary diamond graphs $(D_n^2)_{n \in \N}$ (where the first diamond $D_1^2$ is defined with two middle points instead of ``$\omega$'' middle points as for $D_1^\omega$) characterizes the non super-reflexivity (see \cite{JohnsonSchechtman}).

\newpage

\begin{figure}[h]
\caption{$D_1$ and $D_n$, $n \geq 2$}\label{fig1}
\centering{
\includegraphics[trim={25mm 175mm 7cm 25mm},clip]{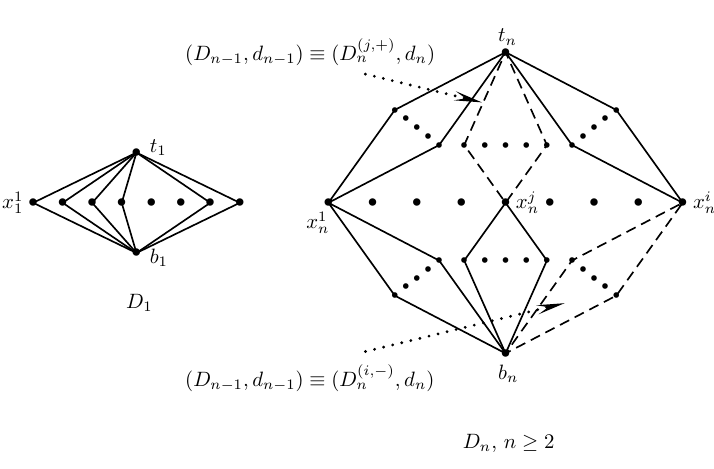}}
\end{figure}

\section{Uniformly discrete metric spaces}\label{sec:uniformly discrete}

In this section we study in details the weak-fragmentability and the dentability index of uniformly discrete metric spaces. 
\subsection{Weak fragmentability index}

For the convenience of the reader, we start by proving in full detail the following folklore upper bound for the weak fragmentability index of a uniformly discrete metric space, that was already mentioned in \cite[Remark 3.12]{Basset}. 
\begin{prop}\label{c:FragmentabilityCUD}
For every uniformly discrete metric space $M$, $\Phi(\F(M))\leq \omega^2$. 
If moreover $M$ is infinite, then $\Phi(\Free(M))$ is either equal to $\omega$ or to $\omega^2$. 
\end{prop}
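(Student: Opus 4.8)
My plan is to prove the universal bound $\Phi(\F(M))\le\omega^{2}$ first and then, for infinite $M$, combine it with a ``no intermediate value'' phenomenon forcing $\Phi(\F(M))\in\{\omega,\omega^{2}\}$. For the bound I would invoke Kalton's decomposition (Proposition~\ref{Kalton's decomposition}): $\F(M)$ is isomorphic to a closed subspace of $\big(\sum_{k\in\Z}\F(M_{k})\big)_{\ell_{1}}$ with $M_{k}=\{x\in M:d(x,0)\le 2^{k}\}$. Each $M_{k}$ is bounded and $\theta$-separated, where $\theta=\inf_{x\ne y}d(x,y)>0$, hence $\F(M_{k})$ is linearly isomorphic to $\ell_{1}(M_{k}\setminus\{0\})$; the only point needing an argument is the lower $\ell_{1}$-estimate, obtained by testing a finitely supported combination $\sum a_{y}\delta_{M}(y)$ against $\sum_{y}\sign(a_{y})\,(\theta/2-d(\cdot,y))^{+}$, which is $1$-Lipschitz because the balls $B(y,\theta/2)$, $y\in M_{k}$, are pairwise disjoint. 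Writing $\ell_{1}(\Gamma)=c_{0}(\Gamma)^{*}$, the inequality $\Phi(Y^{*})\le Sz(Y)$ recalled in the preliminaries together with the classical equality $Sz(c_{0}(\Gamma))=\omega$ give $\Phi(\F(M_{k}))\le\omega$ for every $k$ (trivially so when $M_{k}$ is finite). Proposition~\ref{prop: fragmentabilty index of l_1 sums} with $\alpha=\omega$ then yields $\Phi\big(\big(\sum_{k}\F(M_{k})\big)_{\ell_{1}}\big)\le\omega\cdot\omega=\omega^{2}$, and since $\Phi$ is an isomorphic invariant that does not increase when passing to a closed subspace (the weak topology of a closed subspace is the trace of the ambient one), $\Phi(\F(M))\le\omega^{2}$.

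Now assume $M$ infinite. Since $\delta_{M}$ is an isometry, $\F(M)$ is infinite dimensional, and any infinite dimensional Banach space $X$ satisfies $\Phi(X)\ge\omega$: it is classical and elementary that $[B_X]^{n}_{\eps}\supseteq(1-n\eps/2)B_X$ for every $\eps>0$ and every $n<2/\eps$ (move $x$ along a unit vector in the common kernel of finitely many functionals), so $\Phi(X,\eps)$ is unbounded as $\eps\to0$. In view of the upper bound, the proposition will be proved once we show that $\Phi(\F(M))>\omega$ forces $\Phi(\F(M))\ge\omega^{2}$. This ``gap'' is the crux.

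I would establish the gap for an arbitrary Banach space $X$. Suppose $\Phi(X)>\omega$; then $\Phi(X,\eps_{0})>\omega$, i.e. $[B_X]^{\omega}_{\eps_{0}}=\sigma^{\omega}_{\eps_{0}}(B_X)\ne\emptyset$, for some $\eps_{0}\in(0,1]$. I claim that $[B_X]^{\omega\cdot n}_{\eps_{0}/2^{n-1}}\ne\emptyset$ for every $n\ge1$; granting this, $\Phi(X,\eps_{0}/2^{n-1})\ge\omega\cdot n+1$ for all $n$, whence $\Phi(X)\ge\sup_{n}\omega\cdot n=\omega^{2}$. For the induction step, put $\eps_{n}=\eps_{0}/2^{n-1}$ and fix $v\in[B_X]^{\omega\cdot n}_{\eps_{n}}$. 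Proposition~\ref{prop:conv_comb} with $\lambda=\tfrac12$ and $\alpha=\omega\cdot n$ gives $\tfrac12 v+\tfrac12 B_X\subseteq[B_X]^{\omega\cdot n}_{\eps_{n+1}}$. Since the derivation $[\,\cdot\,]'_{\eps}$ attached to the weak topology is monotone, commutes with translations, and scales as $[\lambda C]'_{\eps}=\lambda[C]'_{\eps/\lambda}$, iterating it $m$ times yields $[B_X]^{\omega\cdot n+m}_{\eps_{n+1}}\supseteq\tfrac12 v+\tfrac12[B_X]^{m}_{\eps_{n}}$ for every finite $m$; intersecting over $m$ and using $\omega\cdot(n+1)=\sup_{m}(\omega\cdot n+m)$,
\[
[B_X]^{\omega\cdot(n+1)}_{\eps_{n+1}}\ \supseteq\ \tfrac12 v+\tfrac12\,[B_X]^{\omega}_{\eps_{n}}\ \ne\ \emptyset,
\]
the right-hand set being nonempty because $[B_X]^{\omega}_{\eps_{n}}\supseteq[B_X]^{\omega\cdot n}_{\eps_{n}}\ni v$. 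This closes the induction, hence the dichotomy, hence (with the upper bound) the proposition.

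The step I expect to demand the most care is this self-improvement: one must apply Proposition~\ref{prop:conv_comb} at the transfinite stage $\omega\cdot n$ (legitimate, as it is stated for all ordinals), track precisely how the dilation $C\mapsto\tfrac12 C$ shifts the $\eps$-parameter across all $m$ finite derivations, and treat the limit ordinal $\omega\cdot(n+1)$ as the intersection of the preceding stages. By contrast, the remaining ingredients — the identification $\F(M_{k})\cong\ell_{1}(M_{k}\setminus\{0\})$ and $Sz(c_{0}(\Gamma))=\omega$ — are classical and should pose no difficulty.
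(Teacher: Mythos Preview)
Your proof is correct and follows essentially the same route as the paper: Kalton's decomposition, the identification $\F(M_k)\cong\ell_1(\Gamma)$ with $\Phi(\ell_1(\Gamma))\le\omega$, and Proposition~\ref{prop: fragmentabilty index of l_1 sums} to get the bound $\omega^2$. The only difference is in the dichotomy for infinite $M$: the paper simply cites the general fact from \cite{Basset} that $\Phi(X)$ is always a power of $\omega$, whereas you supply a direct self-improvement argument (via Proposition~\ref{prop:conv_comb} and homogeneity of the derivation) establishing the special case $\Phi(X)>\omega\Rightarrow\Phi(X)\ge\omega^2$ --- this is essentially the standard proof of that cited fact, so the approaches coincide.
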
  

\begin{proof} The part concerning infinite metric spaces follows from the first estimate and from the the following well known facts:
\begin{itemize}
    \item for a Banach space $X$, $\Phi(X)=\infty$ or $\Phi(X)=\omega^\alpha$, for some ordinal $\alpha$ (see \cite{Basset}),
    \item for a Banach space $X$, $\Phi(X)\ge \omega$ if and only if $X$ is infinite dimensional. 
\end{itemize} 
It remains to prove that $\Phi(\F(M))\leq \omega^2$, when $M$ is uniformly discrete. 

So, let $M$ be a uniformly discrete metric space. Note that a ball of $M$ is a bounded uniformly discrete metric space. Thus its free space is isomorphic to $\ell_1(\Gamma)$, for some index set $\Gamma$. It is well known that $\Phi(\ell_1(\Gamma))\le \omega$ (actually it is equal to $\omega$ as soon as $\Gamma$ is infinite). Then the conclusion follows from Kalton's decomposition (Proposition \ref{Kalton's decomposition}) and Proposition \ref{prop: fragmentabilty index of l_1 sums}.
\end{proof}

In order to see that our upper bound is attained, the key ingredient will be to use the countably branching diamond graphs $(D_n^\omega)_{n\in \N}$. The following proposition is a direct consequence of the results from \cite{Basset}.

\begin{prop}\label{prop: Phi_free_diamonds} Let $(M,d)$ be a metric space and assume that the family $(D_n^\omega)_{n\in \N}$ embeds with equal distortion into $M$. Then $\Phi(\F(M))\ge \omega^2$.
\end{prop}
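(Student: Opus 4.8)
The statement asserts that equal-distortion embeddability of the family $(D_n^\omega)_{n\in\N}$ into $M$ forces $\Phi(\F(M))\ge\omega^2$. The natural strategy is to reduce to a statement about the free spaces $\F(D_n^\omega)$ themselves and then transfer it to $\F(M)$ via the functorial properties of $\F$. Concretely, I would first invoke the companion results of \cite{Basset} (the paper says this proposition is a ``direct consequence'' of them): those results should provide, for each $n$, a lower estimate of the form $\Phi(\F(D_n^\omega))\ge\omega\cdot n$ (or at least $\sigma_{\eps_0}^{\omega\cdot n}(B_{\F(D_n^\omega)})\ne\emptyset$ for some fixed $\eps_0>0$ independent of $n$), obtained by peeling the canonical net of molecules of the diamond $D_n^\omega$ one ``level'' at a time. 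The key point is that the threshold $\eps_0$ does not degrade with $n$; this uniformity is what will eventually yield $\omega^2=\sup_n \omega\cdot n$ rather than just $\omega\cdot n$ for each finite $n$.

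\textbf{Transfer to $\F(M)$.} Since $(D_n^\omega)_{n\in\N}$ embeds with equal distortion into $M$ and $M$ generates a Banach space, by the remark in the Preliminaries this is the same as an equi-bi-Lipschitz embedding: there is $D\ge 1$ and maps $f_n\colon D_n^\omega\to M$ with $s_n\,d(x,y)\le d(f_n(x),f_n(y))\le s_n D\,d(x,y)$. After composing with $\delta_M$ and using the factorization $\widehat{f_n}\in B(\F(D_n^\omega),\F(M))$ together with a bounded left inverse (available because the image is a bi-Lipschitz copy, so the Lipschitz retraction/McShane extension gives a bounded linear projection-type map, or more simply because $\F$ of a bi-Lipschitz embedding is an isomorphic embedding with control depending only on $D$), one gets that $\F(D_n^\omega)$ is $D$-isomorphic to a subspace of $\F(M)$, uniformly in $n$. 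Now I would use the standard monotonicity and stability properties of the fragmentability derivation: if $Y\subseteq X$ is a $C$-isomorphic copy of $Z$, then the $\sigma$-derivation on $B_Z$ controls the $\sigma$-derivation on $B_X$ up to a change of $\eps$ by a factor depending only on $C$; hence $\sigma_{\eps_0/C'}^{\alpha}(B_{\F(M)})\ne\emptyset$ whenever $\sigma_{\eps_0}^{\alpha}(B_{\F(D_n^\omega)})\ne\emptyset$, with $C'=C'(D)$ fixed. Combining with the level estimate for the diamonds, $\sigma_{\eps_0/C'}^{\omega\cdot n}(B_{\F(M)})\ne\emptyset$ for every $n\in\N$, so $\Phi(\F(M),\eps_0/C')\ge\sup_n \omega\cdot n=\omega^2$, and therefore $\Phi(\F(M))\ge\omega^2$.

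\textbf{Main obstacle.} The delicate point is not the abstract transfer — that is routine bookkeeping with the derivation and isomorphic constants — but making sure one is genuinely allowed to cite \cite{Basset} for the uniform lower bound $\Phi(\F(D_n^\omega))\ge\omega\cdot n$ with a threshold $\eps_0$ independent of $n$, and that equal distortion (rather than equi-bi-Lipschitz embedding with a common scaling factor) suffices. The resolution of the second worry is exactly the observation recorded in the Preliminaries: into a Banach space, embedding with equal distortion, equi-bi-Lipschitz embedding, and the normalized version $d(x,y)\le\|f_i(x)-f_i(y)\|\le D\,d(x,y)$ are all equivalent, so one may assume a uniform distortion $D$ and handle the scaling factors $s_n$ by homogeneity of the derivation (the sets $\mathcal A$ are stable under dilations, so rescaling the metric rescales $\eps$ but not the ordinal count). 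For the first worry, I would either cite the relevant proposition of \cite{Basset} verbatim or, if a self-contained argument is wanted, recall the diamond peeling argument: at the bottom level the midpoint molecules of the $\omega$ parallel branches of each innermost $D_1^\omega$ survive one $\sigma$-derivation (Lemma~\ref{lem:midpointtrick}-type reasoning applied in the weak topology, using that weak neighbourhoods cannot separate infinitely many branch midpoints), and iterating through the $n$ nested scales contributes $\omega$ ordinals per scale, giving $\omega\cdot n$; the uniformity of $\eps_0$ is visible because at every scale the relevant molecules have norm-distance bounded below by a universal constant (coming from $d_1(t_1,b_1)=2$ after normalization).
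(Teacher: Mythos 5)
Your transfer step (equal distortion into $M$ gives, for each $n$, a linear embedding of $\F(D_n^\omega)$ into $\F(M)$ with isomorphism constant depending only on the common distortion, the scaling factors being harmless) is fine and is exactly what the paper does. The genuine problem is the input you feed into it: the claimed lower estimate for the diamonds, $\Phi(\F(D_n^\omega))\ge\omega\cdot n$, or even just $\sigma_{\eps_0}^{\omega\cdot n}(B_{\F(D_n^\omega)})\neq\emptyset$ for a fixed $\eps_0>0$, is false. Each $D_n^\omega$ is a bounded, $1$-separated countable metric space, so $\F(D_n^\omega)$ is isomorphic to $\ell_1$, and since $\Phi$ is an isomorphic invariant with $\Phi(\ell_1)=\omega$, one has $\Phi(\F(D_n^\omega))=\omega$ and $\sigma_{\eps}^{\omega\cdot n}(B_{\F(D_n^\omega)})=\emptyset$ for every $\eps>0$ and $n\ge 2$. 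The ``$\omega$ ordinals per scale'' count you describe in your last paragraph (the $2^k$-averaging of branch midpoints) is the \emph{slice} derivation count: it is precisely Lemma~\ref{l:DentabilityLowerBound} of this paper, it concerns the dentability index (consistent with $D(\ell_1)=\omega^2$), and it leads to the $\omega^3$ lower bound for $D$, not to anything about $\Phi$. For weakly open sets the correct statement from the companion paper is only $\sigma_1^{\,n}(B_{\F(D_n^\omega)})\neq\emptyset$: one derivation step per diamond level, with the threshold $1$ uniform in $n$ (a weak neighbourhood given by finitely many functionals cannot separate the countably many branch molecules).

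Because the true diamond estimate only survives $n$ (not $\omega\cdot n$) derivations, your final ``take the sup over $n$'' argument yields merely $\Phi(\F(M),1/C)\ge\omega$, hence $\Phi(\F(M))\ge\omega$, which is far from $\omega^2$. The missing idea is a self-improvement step: by symmetry of the derived sets one upgrades $\sigma_{1/C}^{\,n}(B_{\F(M)})\neq\emptyset$ to $0\in\sigma_{1/2C}^{\,n}(B_{\F(M)})$ for all $n$, hence $0\in\sigma_{1/2C}^{\,\omega}(B_{\F(M)})$, and then a homogeneity argument in the spirit of Proposition~\ref{prop:conv_comb} (the derived sets of $\lambda B + (1-\lambda)x$ sit inside derived sets of $B$ at a rescaled $\eps$) allows one to restart the derivation inside $\sigma^{\omega}$ and reach $\sigma^{\omega\cdot k}$ for every $k$, giving $\Phi(\F(M))\ge\omega^2$; this is exactly what the paper delegates to Facts~2.3 and~2.4 of the cited companion work. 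Without this step (or an equivalent appeal to the fact that $\Phi$ of a Banach space is always of the form $\omega^\alpha$, so $\Phi>\omega$ forces $\Phi\ge\omega^2$), your argument does not reach the stated conclusion.
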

\begin{proof} So assume that there exists $C\ge 1$ such that for each $n\in \N$, $D_n^\omega$ bi-Lipschitz embeds into  $M$ with distortion at most $C$. Then $\F(D_n^\omega)$ linearly embeds into $\F(M)$ with distortion at most $C$. Proposition 3.5 in \cite{Basset} ensures that $\sigma_1^n(B_{\F(D_n^\omega)})\neq \emptyset$. It readily follows that $\sigma_{1/C}^n(B_{\F(M)}) \neq \emptyset$ and so, using~\cite[Fact~2.3]{Basset},   $0\in\sigma_{1/2C}^n(B_{\F(M)})$ for every $n\in \N$. 
This, together with~\cite[Fact~2.4]{Basset} implies 
that $\Phi(\F(M))\ge \omega^2$.
\end{proof}

We can now present a few examples of uniformly discrete metric spaces $M$ such that $\Phi(\F(M))=\omega^2$. Let us quickly recall the definition of the $\ell_1$-sum of pointed metric spaces. Let $(M_n)_{n\in \N}$ be a sequence of pointed metric spaces and $M=\{0_M\}\bigsqcup_{n\in \N}(M_n \setminus\{0_{M_n}\})$. In $M$, all base points $0_{M_n}$ are identified with $0_M$. For $x,y \in M_n$, $d_M(x,y)=d_{M_n(x,y)}$ and for $x\in M_n$, $y\in M_m$ and $n\neq m$, $d_M(x,y)=d_{M_n}(x,0_{M_n})+d(y,0_{M_m})$.  It is clear that if all $M_n$'s are $a$-separated for some $a>0$, then $M$ is $a$-separated. Our first example is then immediate.

\begin{proposition} Let $M$ be the $\ell_1$-sum of the $D_n^\omega$'s. Then $M$ is a uniformly discrete metric space and the weak-fragmentability index of $\F(M)$ is equal to $\omega^2$. 
\end{proposition}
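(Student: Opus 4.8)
The plan is to verify the two claims—that $M$ is uniformly discrete and that $\Phi(\F(M)) = \omega^2$—by combining the $\ell_1$-sum construction with the two preceding propositions.

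First I would check uniform discreteness. Each $D_n^\omega$ is a shortest path metric on a graph with unit edges, so in particular $D_n^\omega$ is $1$-separated (distinct vertices are at distance at least $1$). By the remark immediately preceding the statement, the $\ell_1$-sum of $1$-separated pointed metric spaces is again $1$-separated, so $M$ is $1$-separated, hence uniformly discrete. (One should fix basepoints in the $D_n^\omega$, e.g. take $0_{D_n^\omega} = t_n$; the choice is irrelevant.)

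Next I would establish the upper bound $\Phi(\F(M)) \le \omega^2$. This is immediate from Proposition~\ref{c:FragmentabilityCUD}, since $M$ is uniformly discrete. For the lower bound $\Phi(\F(M)) \ge \omega^2$, by Proposition~\ref{prop: Phi_free_diamonds} it suffices to show that the family $(D_n^\omega)_{n \in \N}$ embeds with equal distortion into $M$. But this is essentially trivial by construction of the $\ell_1$-sum: for each fixed $n$, the natural inclusion of $D_n^\omega$ (the $n$-th summand, with its basepoint identified with $0_M$) into $M$ is an isometric embedding, since the metric $d_M$ restricted to the $n$-th summand coincides with $d_{D_n^\omega}$. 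Thus each $D_n^\omega$ embeds isometrically (distortion $1$, scaling factor $1$) into $M$, so a fortiori the family embeds with equal distortion. Combining the two bounds gives $\Phi(\F(M)) = \omega^2$.

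I do not anticipate a genuine obstacle here: the statement is a direct corollary assembled from Proposition~\ref{c:FragmentabilityCUD} (upper bound), Proposition~\ref{prop: Phi_free_diamonds} (lower bound), and the elementary structure of the $\ell_1$-sum of pointed metric spaces. The only point requiring the slightest care is confirming that the canonical copy of each $D_n^\omega$ sits isometrically inside the $\ell_1$-sum—which is exactly how $d_M$ is defined on each summand—and that picking a basepoint in each $D_n^\omega$ does not affect the uniform discreteness, which follows from the observation recorded just before the statement.
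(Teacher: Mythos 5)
Your proposal is correct and matches the paper's intended argument: the paper records the $1$-separation of $\ell_1$-sums just before the statement and calls the proposition "immediate", meaning precisely the combination of Proposition~\ref{c:FragmentabilityCUD} for the upper bound and Proposition~\ref{prop: Phi_free_diamonds} (via the isometric inclusion of each summand $D_n^\omega$) for the lower bound. No gaps.
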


We now concentrate on grids and nets in Banach spaces and we start with a study of  $\Z_{c_0}$, which is a universal countable uniformly discrete metric space. Let us recall that  $\Z_{c_0}=(\Z^{<\omega},\|\ \|_\infty)$ is a net in $c_0$ and detail, for the sake of completeness, this rather well known universality result.

\begin{prop}
    \label{prop: grid over c0 universal space}
    $\Z_{c_0}$ is universal for countable uniformly discrete metric spaces and bi-Lipschitz embeddings.
    More precisely, given a countable uniformly discrete metric space $(M,d)$ and $\varepsilon > 0$, there exist $f \colon M \to \Z_{c_0}$ and $s \in (0,+\infty)$ such that for every $x, y \in M$:
    \[
    s\cdot d(x,y) \leq \norm{f(x) - f(y)}_{\infty} \leq s\cdot (2+\varepsilon)d(x,y).
    \]   
\end{prop}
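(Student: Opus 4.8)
The plan is to construct the embedding $f \colon M \to \Z_{c_0}$ explicitly by enumerating $M = \set{x_0, x_1, x_2, \dots}$ (with $x_0$ the basepoint, say) and using each coordinate of $c_0$ to record a suitably rescaled distance to one of the points $x_n$. First I would fix the uniform discreteness: after rescaling, we may assume $d(x,y) \geq 1$ for all distinct $x,y \in M$, and we choose the global scaling factor $s$ at the end. The natural candidate is a Kuratowski-type embedding $x \mapsto (d(x,x_n) - d(x_0,x_n))_{n}$, which is an isometry into $\ell_\infty$ and sends the basepoint to $0$; the two issues to fix are (a) that the image does not land in $c_0$, let alone in the integer grid, and (b) controlling the distortion once we round to integers.

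To handle (a), I would truncate and round. The standard trick: choose an increasing sequence of finite "scales" and, in the $n$-th coordinate, replace $d(x,x_n)-d(x_0,x_n)$ by a clamped, rescaled, rounded version so that for each fixed $x$ only finitely many coordinates are nonzero (this uses that distances to far-away $x_n$ are essentially constant near $x$ once we clamp, combined with the fact that $M$ is countable so we can make the truncation depend on an exhaustion of $M$). Concretely, after scaling $d$ up by a large integer factor $N$ (to be chosen depending on $\eps$), I would set the $n$-th coordinate of $f(x)$ to be something like $\big\lfloor \min\set{N d(x,x_n), R_n} \big\rceil - \big\lfloor \min\set{N d(x_0,x_n), R_n}\big\rceil$ for a rapidly growing sequence $R_n$; the clamping at level $R_n$ kills all but finitely many coordinates for each $x$, giving membership in $\Z_{c_0}$, while rounding costs at most an additive error of $1$ per coordinate, i.e. $2$ total in the $\sup$ norm of a difference, which against a true (scaled) distance of at least $N$ yields multiplicative distortion $\le 1 + \frac{2}{N} \le 1 + \eps$ on top of the isometry. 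One checks separately that the clamping does not destroy the lower bound: for $x,y$ close together the relevant coordinates $x_n$ are the ones with $x_n$ near the pair, and for those $R_n$ is large enough not to interfere; for $x, y$ far apart, clamping only at large $R_n$ still leaves the coordinate $n$ with $x_n = x$ (where $d(x,x_n)=0$) available, so $\norm{f(x)-f(y)}_\infty \gtrsim N d(x_0, \text{something})$ — this needs a small bookkeeping argument.

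The cleaner route, which I would actually write up, is to bypass hand-tuning $R_n$: enumerate $M$, let $M_k = \set{x_0, \dots, x_k}$, and define $f$ as a limit/concatenation of the finite Kuratowski embeddings of the $M_k$'s into $\ell_\infty^{k}$, glued so that coordinates stabilize. On a finite $(1,\cdot)$-separated set, $x \mapsto \big(\lfloor N d(x, x_j)\rceil\big)_{j\le k}$ lands in $\Z^{k}$ and is bi-Lipschitz with distortion $\le 1 + 2/N$ into $\ell_\infty^k$; the point is to arrange the blocks so that the embedding of $M_k$ extends that of $M_{k-1}$ on the common coordinates and the new point $x_k$ only perturbs finitely many coordinates of the already-placed points — which is automatic since $M_{k-1}$ is finite. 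Taking $N$ with $2/N < \eps$ and letting $s = 1/N$ (absorbing the initial rescaling of $d$) gives the claimed bound $s\, d(x,y) \le \norm{f(x)-f(y)}_\infty \le s(2+\eps) d(x,y)$; the factor $2$ (rather than $1$) is the price of integrality and matches the statement.

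The main obstacle is (b) done honestly: verifying that the simultaneous rounding and truncation preserves the lower bound $\norm{f(x)-f(y)}_\infty \ge s\,d(x,y)$ for \emph{all} pairs, including far-apart ones whose "witnessing coordinate" might have been clamped. The resolution is that the coordinate indexed by $x$ itself always witnesses $d(x,y)$ up to rounding (since $d(x,x)=0$ is never clamped away and $\lfloor N d(y,x)\rceil \ge N d(x,y) - 1 \ge (1-\tfrac1N)N d(x,y)$), so one never actually relies on a clamped coordinate for the lower estimate — but this has to be stated carefully because the basepoint-shift $\lfloor N d(x_0,x_n)\rceil$ must be handled consistently, and one should double-check the sub-case where $x$ or $y$ equals the basepoint. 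Everything else is routine rounding arithmetic.
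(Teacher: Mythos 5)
The entire difficulty of this proposition is concentrated in the step you treat as routine bookkeeping: forcing the (almost isometric) Kuratowski-type map to take values in $c_0$. The clamping you introduce for that purpose conflicts directly with your lower bound, and your proposed resolution --- ``the coordinate indexed by $x$ itself always witnesses $d(x,y)$ up to rounding, since $d(x,x)=0$ is never clamped away'' --- is false: that coordinate witnesses only $\min\set{Nd(x,y),R_{i(x)}}$, and the inequality $\lfloor Nd(y,x)\rceil\geq Nd(x,y)-1$ you invoke silently drops the clamp. Concretely, take $M=\set{x_0,p}\cup\set{q_n:n\in\N}$ with $d(x_0,q_n)=1$ and all other nonzero distances equal to $2$; this is countable, $1$-separated and bounded. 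Since $d(p,q_n)-d(x_0,q_n)=1$ for every $n$, the coordinate of $f(p)$ indexed by $q_n$ is a nonzero integer whenever $R_{q_n}\gtrsim N$, so membership of $f(p)$ in $\Z_{c_0}$ forces $R_{q_n}\leq N+O(1)$ for all but finitely many $n$. But then, for such $n\neq m$, every coordinate of $f(q_n)-f(q_m)$ has modulus at most $\max(R_{q_n},R_{q_m})+O(1)\approx N$, while $Nd(q_n,q_m)=2N$: the only witnessing coordinates are exactly the clamped ones. So the claimed almost-isometric lower bound fails (on such pairs the scheme loses a factor $2$ at best), and you give no argument that some choice of the $R_n$ keeps the distortion below $2+\eps$ in general. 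The ``cleaner route'' has the same defect: concatenating the Kuratowski blocks of the finite sets $M_k$ gives every fixed point nonzero integer coordinates in infinitely many blocks, so the limit map does not take values in $c_0$ unless the blocks are truncated, which reinstates exactly the same tension.

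This is not a repairable detail of the write-up but a misidentification of where the constant $2$ comes from. Your accounting would produce embeddings of every countable uniformly discrete space into $c_0$ with distortion $1+O(1/N)$; in fact integrality only costs an additive $+1$ (absorbed by the $\delta$-separation), and the factor $2$ in the statement is the Aharoni constant, which cannot be improved in general --- that sharpness is the point of \cite{KaltonLancien}. The paper's proof is accordingly different and short: it quotes the optimal form of Aharoni's theorem from \cite{KaltonLancien} to get $h\colon M\to c_0$ with $\lambda d(x,y)\leq\norm{h(x)-h(y)}_\infty\leq 2\lambda d(x,y)$, then composes with coordinatewise rounding to the nearest integer, converting the additive error $\leq 1$ into a multiplicative $\eps$ via uniform discreteness and a large choice of $\lambda$. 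A self-contained construction along your lines would amount to reproving Aharoni's theorem, i.e.\ to resolving precisely the truncation-versus-lower-bound conflict that your argument assumes away.
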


\begin{proof} Let $\delta >0$ such that $(M,d)$ is $\delta$-separated and fix $\eps >0$ and $\lambda>\frac{1}{\delta}$ such that 
$(2\lambda+\frac1\delta)(\lambda -\frac1\delta)^{-1}\leq 2+\varepsilon$. 
Since $(M,d)$ is separable, it follows from the optimal version of Aharoni's theorem proved in \cite{KaltonLancien} that there exists $h:M \to c_0$ such that 
$$\forall x,y \in M,\ \ \lambda d(x,y)\le \|h(x)-h(y)\|_\infty \le 2\lambda d(x,y).$$
Then, define $g:c_0\to \Z_{c_0}$ by approximating each term of a sequence $z \in c_0$ by the closest integer. It is clear that for all $z\in c_0$, $\|z-g(z)\|_\infty\le \frac{1}{2}$, so 
\[\forall z,z' \in c_0,\ \ \norm{z - z'}_{\infty}-1 \leq \norm{g(z) - g(z')}_{\infty} \leq \norm{z - z'}_{\infty}+1\]
Define now $f=g \circ h$. 
Since $(M,d)$ id $\delta$-separated, we get
\[\forall x,y \in M,\ \ \Big(\lambda-\frac{1}{\delta}\Big)d(x,y) \le \|f(x)-f(y)\|_\infty \le \Big(2\lambda+\frac{1}{\delta}\Big)d(x,y).\]
We let $s=\lambda-\frac1\delta$. 
By our initial choice of $\lambda$, it is obvious that $f$ is the embedding we were looking for.
\end{proof}

\begin{proposition}
    \label{prop: Phi(F(grid))}
    The weak-fragmentability index of $\F(\Z_{c_0})$ is equal to $\omega^2$.
\end{proposition}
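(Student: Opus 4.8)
The plan is to establish the two inequalities $\Phi(\F(\Z_{c_0}))\leq\omega^2$ and $\Phi(\F(\Z_{c_0}))\geq\omega^2$ separately. The upper bound is immediate from Proposition~\ref{c:FragmentabilityCUD}, since $\Z_{c_0}=(\Z^{<\omega},\|\cdot\|_\infty)$ is a uniformly discrete metric space ($1$-separated). So the real content is the lower bound, and for that the natural strategy is to invoke Proposition~\ref{prop: Phi_free_diamonds}: it suffices to show that the family of countably branching diamond graphs $(D_n^\omega)_{n\in\N}$ embeds with equal distortion into $\Z_{c_0}$.

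First I would recall that, by Proposition~\ref{prop: grid over c0 universal space}, $\Z_{c_0}$ is universal for countable uniformly discrete metric spaces with bi-Lipschitz embeddings, and more precisely that for each such $M$ and each $\eps>0$ there is an embedding into $\Z_{c_0}$ of distortion at most $2+\eps$. Since each $D_n^\omega$ is a countable uniformly discrete metric space, applying this with, say, $\eps=1$ yields for every $n$ a bi-Lipschitz embedding $f_n\colon D_n^\omega\to\Z_{c_0}$ with distortion at most $3$ (after the appropriate scaling). The distortion bound $C=3$ is uniform in $n$ — this is exactly what "embeds with equal distortion" requires — and it is here that one must be slightly careful: one should check that the construction in Proposition~\ref{prop: grid over c0 universal space} indeed produces, for a fixed target distortion, a single constant that works for all the $D_n^\omega$ simultaneously. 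This is clear because $\eps$ is chosen independently of the space $M$; the only space-dependent quantity is the separation constant $\delta$, and all $D_n^\omega$ are $1$-separated (indeed all their nonzero distances are integers $\geq 1$), so one fixed choice of $\lambda$ works throughout.

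With the equi-embedding in hand, Proposition~\ref{prop: Phi_free_diamonds} gives $\Phi(\F(\Z_{c_0}))\geq\omega^2$ directly, and combining with the upper bound yields equality. I expect no serious obstacle: the only point demanding attention is the uniformity of the distortion constant across the family $(D_n^\omega)_n$, which, as noted, follows transparently from the proof of the universality proposition rather than needing any new argument. An alternative route, avoiding any appeal to the refined quantitative statement, would be to note that $\Z_{c_0}$ contains an isometric (or $(1+\eps)$-equi-bi-Lipschitz) copy of the $\ell_1$-sum $M$ of the $D_n^\omega$'s of the previous proposition — since $M$ is itself a countable uniformly discrete space — and then quote that $\Phi(\F(M))=\omega^2$; but the direct embedding of the diamonds is cleaner and is the approach I would present.
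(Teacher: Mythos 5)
Your proof is correct and follows essentially the same route as the paper: the upper bound from Proposition~\ref{c:FragmentabilityCUD}, and the lower bound by combining Proposition~\ref{prop: grid over c0 universal space} (equal-distortion embedding of the $D_n^\omega$'s into $\Z_{c_0}$, which indeed only needs the uniform $1$-separation of the diamonds and a fixed $\varepsilon$) with Proposition~\ref{prop: Phi_free_diamonds}. Your extra remark on the uniformity of the distortion constant is exactly the point the paper leaves implicit, so there is nothing to add.
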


\begin{proof} Proposition \ref{prop: grid over c0 universal space} ensures that the family $(D_n^\omega)_{n\in \N}$ embeds with equal distortion into $\Z_{c_0}$. Then we apply Proposition \ref{prop: Phi_free_diamonds} to obtain that $\Phi(\F(\Z_{c_0}))\ge\omega^2$. The other inequality follows from Proposition \ref{c:FragmentabilityCUD}. 
\end{proof}

It is proved in \cite{BCDKRSZ} that the family $(D_n^\omega)_{n\in \N}$ equi-bi-Lipschitz embeds into $L_1$. A reasoning similar to the proof of Proposition  \ref{prop: grid over c0 universal space}  yields that $(D_n^\omega)_{n\in \N}$  embeds with equal distortion into any net in $L_1$, and, by looking carefully at the proof in \cite{BCDKRSZ}, that it embeds with equal distortion into the integer grid of the normalized Haar basis of $L_1$ that we denote $\Z_{Haar}$. So, we can deduce the following from Proposition \ref{prop: Phi_free_diamonds}.

\begin{proposition} Let $N$ be a net in $L_1$. Then
$$\Phi(\F(N))=\Phi(\F(\Z_{Haar}))=\omega^2.$$
\end{proposition}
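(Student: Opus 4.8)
The plan is to prove the chain of equalities
\[
\Phi(\F(N))=\Phi(\F(\Z_{Haar}))=\omega^2
\]
by combining the upper bound coming from uniform discreteness with the lower bound coming from the diamond graphs. For the upper bound, observe that both $N$ (any net in $L_1$) and $\Z_{Haar}$ are uniformly discrete metric spaces: a net is $a$-separated by definition, and the integer grid of the normalized Haar basis is $1$-separated since distinct finitely supported integer combinations of Haar functions differ by at least one coefficient and the Haar functions are $\|\cdot\|_1$-normalized and disjointly supported at each level. Hence Proposition~\ref{c:FragmentabilityCUD} gives $\Phi(\F(N))\le\omega^2$ and $\Phi(\F(\Z_{Haar}))\le\omega^2$ directly.

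For the lower bound, the key input is the fact recalled just before the statement: the family $(D_n^\omega)_{n\in\N}$ equi-bi-Lipschitz embeds into $L_1$ (this is from \cite{BCDKRSZ}). First I would argue that whenever a family equi-bi-Lipschitz embeds into $L_1$, it also embeds with equal distortion into any net $N$ of $L_1$: one composes the equi-bi-Lipschitz embeddings $f_n\colon D_n^\omega\to L_1$ with a nearest-point map $L_1\to N$ (which moves points by at most the density constant $b$), and since the $D_n^\omega$ can be rescaled so that their minimal interpoint distance is as large as we like, the additive error $\le 2b$ introduced by the net map only worsens the distortion by a bounded multiplicative factor, uniformly in $n$. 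This is exactly the ``reasoning similar to the proof of Proposition~\ref{prop: grid over c0 universal space}'' referenced in the paragraph, and I would spell out that rescaling-then-rounding estimate. For $\Z_{Haar}$ specifically, I would invoke the more delicate claim—obtained by inspecting the construction in \cite{BCDKRSZ}—that the embeddings of $D_n^\omega$ into $L_1$ can be taken with image consisting of integer combinations of Haar functions (after suitable rescaling), so that $(D_n^\omega)_{n\in\N}$ embeds with equal distortion into $\Z_{Haar}$ as well. Once these embedding statements are in place, Proposition~\ref{prop: Phi_free_diamonds} immediately yields $\Phi(\F(N))\ge\omega^2$ and $\Phi(\F(\Z_{Haar}))\ge\omega^2$, completing both equalities.

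The main obstacle is the $\Z_{Haar}$ part: unlike the net-in-$L_1$ case, where a generic nearest-point/rounding argument suffices, the claim that the diamond embeddings land (after rescaling) in the \emph{integer} grid of the Haar basis is not formal—it requires actually examining the explicit embeddings constructed in \cite{BCDKRSZ} and checking that, at the relevant scale, all the coordinate values that appear are integers (or can be rounded to integers without destroying the uniform distortion bound, using again that the $D_n^\omega$ are $2^n$-separated at scale $n$). This verification, though routine in spirit, is where the real content lies; everything else is an assembly of the already-established Propositions~\ref{c:FragmentabilityCUD}, \ref{prop: Phi_free_diamonds}, and the embedding facts.
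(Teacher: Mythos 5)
Your proposal is correct and follows essentially the same route as the paper: upper bound from Proposition~\ref{c:FragmentabilityCUD} via uniform discreteness, and lower bound by transferring the equi-bi-Lipschitz embeddings of $(D_n^\omega)_{n\in\N}$ into $L_1$ from \cite{BCDKRSZ} to any net (rescale-then-round, as in Proposition~\ref{prop: grid over c0 universal space}) and, by inspecting the construction in \cite{BCDKRSZ}, to $\Z_{Haar}$, then applying Proposition~\ref{prop: Phi_free_diamonds}. Only a cosmetic point: your justification that $\Z_{Haar}$ is $1$-separated via disjoint supports is imprecise (Haar functions at different levels overlap); the uniform separation follows instead from the monotonicity of the Haar basis in $L_1$, which bounds the first differing coefficient by twice the norm of the difference.
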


\begin{remark}\label{rem:Phi} 
Determining what are the possible values of $\Phi(\F(M))$ for $M$ separable complete and purely 1-unrectifiable and computing $\Phi(\F(M))$ is a general interesting question. The results in \cite{Basset} imply that this set of possible values is uncountable. One important subquestion is to determine, for which $M$ we have that $\Phi(\F(M))=\omega$. Let us mention in this direction that it follows from results in \cite{AGPP} and \cite{Dalet} that if $M$ is a proper (i.e. its closed balls are compact) p-1-u metric space, then $\F(M)$ is isometrically a dual space and its norm is asymptotically uniformly convex for the corresponding weak$^*$ topology. In particular $\F(M)$ is asymptotically uniformly convex and thus $\Phi(\F(M))\le\omega$.

We do not know any example of a net in an infinite dimensional Banach space or of an integer grid of an infinite semi-normalized Schauder basis such that the  fragmentability index of its free space is equal to $\omega$.  In view of the results from \cite{LancienPernecka} and \cite{HajekPernecka}, the integer grid of the canonical basis of $\ell_1$ is a natural candidate. 
\end{remark}

\begin{remark}\label{r:Johnson-Zippin}
    In~\cite[Example 5.6]{Kalton2004}, Kalton proved that $\Free(\Z_{c_0})$ is not isomorphic to a subspace of the Johnson-Zippin space $C_1$.
    His argument is based on the fact that already the grid $\Z_{c_0}$ does not bi-Lipschitz embed into any stable space, in particular neither into $C_1$.
    Recall that $C_1=\left(\sum_{n=1}^\infty X_n\right)_{\ell_1}$ where $(X_n)_{n=1}^\infty$ is a sequence of finite dimensional spaces dense, for the Banach-Mazur distance, in all finite dimensional spaces. 
    (The space $C_1$ is known to be independent of the particular sequence $(X_n)$ up to almost isometry.)
    It is clear that $\Phi(C_1)=\omega$ by a direct application of Proposition~\ref{prop: fragmentabilty index of l_1 sums}.
    Since $\Phi(\Free(\Z_{c_0}))=\omega^2$ we obtain a fundamentally different proof of Kalton's example. 
    More importantly, if $M$ is any net in $L_1$ or the space $\Z_{Haar}$, then again $\Free(M)$ is not isomorphic to a subspace of $C_1$ by comparison of the corresponding weak-fragmentability indices. 
    This time, on the other hand, $M\subset L_1$ which is known to be a stable space. 
\end{remark}

\subsection{Dentability index}
We now study the dentability index of free spaces over uniformly discrete metric spaces and start again with a general upper bound. 
\begin{prop}\label{c:DentabilityCUD}
For every uniformly discrete metric space $M$, $D(\F(M))\leq \omega^3$. 
If moreover $M$ is infinite then $D(\Free(M))$ is either equal to $\omega^2$ or to $\omega^3$. 
\end{prop}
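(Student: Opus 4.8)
The plan is to mirror the structure of the proof of Proposition~\ref{c:FragmentabilityCUD}, but with the dentability index $D$ in place of the weak-fragmentability index $\Phi$. The reduction of the "infinite" dichotomy to the main estimate should go through the same two general facts: for any Banach space $X$, $D(X)=\infty$ or $D(X)=\omega^\alpha$ for some ordinal $\alpha$ (this is the analogue of the $\Phi$ fact used above, and is classical for $D$, see Lancien's work on the Szlenk/dentability indices), and $D(X)\ge\omega^2$ as soon as $X$ is infinite-dimensional. The latter deserves a word: for infinite-dimensional $X$, $D(X)\ge\omega$ always, and in fact the lower bound $\omega^2$ for $D(\F(M))$ with $M$ infinite uniformly discrete should come from the fact that $\F(M)$ contains (isometrically, via a net point and its neighbors, or more simply) an isometric copy of $\ell_1$ or at least an infinite-dimensional $\ell_1$-sum forcing $D\ge\omega^2$; I would cite the fact that $D(\ell_1)=\omega^2$ together with $\F(M)$ containing a complemented-ish copy of $\ell_1(\Gamma)$, or invoke that $\F(M)$ is not super-reflexive, hence $D(\F(M))\ge\omega^2$. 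Actually the cleanest route: any infinite uniformly discrete $M$ contains a sequence of points that is bi-Lipschitz equivalent to an infinite subset of $\Z_{c_0}$ whose free space contains $\ell_1$, and $D(\ell_1)=\omega^2$, so $D(\F(M))\ge D(\ell_1)=\omega^2$.

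For the main estimate $D(\F(M))\le\omega^3$, I would again invoke Kalton's decomposition (Proposition~\ref{Kalton's decomposition}): $\F(M)$ embeds isomorphically into $\big(\sum_{k\in\Z}\F(M_k)\big)_{\ell_1}$, where each $M_k$ is a \emph{bounded} uniformly discrete metric space, hence $\F(M_k)$ is isomorphic to some $\ell_1(\Gamma_k)$. Since $D(\ell_1(\Gamma))\le\omega^2$ for every index set $\Gamma$ (it is exactly $\omega^2$ when $\Gamma$ is infinite), we have $D(\F(M_k))\le\omega^2$ for all $k$, with a uniform bound. The final step is an $\ell_1$-sum estimate for the dentability index: if $\Phi$ (or $D$) of each summand is $\le\alpha$, then the index of the $\ell_1$-sum is $\le\alpha\cdot\omega$. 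For $\Phi$ this is Proposition~\ref{prop: fragmentabilty index of l_1 sums}; I expect the analogous statement $D\big(\big(\sum_n X_n\big)_{\ell_1}\big)\le\alpha\cdot\omega$ whenever $D(X_n)\le\alpha$ for all $n$ to hold by the same argument (the slice derivation behaves well under $\ell_1$-sums: a slice of the sum, when intersected with a bounded set, is controlled by slices of finitely many coordinates). Applying this with $\alpha=\omega^2$ gives $D\big(\big(\sum_k\F(M_k)\big)_{\ell_1}\big)\le\omega^2\cdot\omega=\omega^3$, and since $D$ is an isomorphic invariant and monotone under passing to subspaces, $D(\F(M))\le\omega^3$.

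The main obstacle is the $\ell_1$-sum lemma for the dentability index. The excerpt only quotes it for $\Phi$ (Proposition~\ref{prop: fragmentabilty index of l_1 sums}, attributed to \cite{Basset}), so I would either need to locate a reference for the $D$-version (the analogous statement for the Szlenk index of $\ell_1$-sums is in the literature, and the dentability-index version should follow by the same combinatorial scheme applied to slices rather than weak-open sets), or prove it directly via transfinite induction: the key point is that an open slice $S=\{x:x^*(x)>a\}$ of $\big(\sum X_n\big)_{\ell_1}$, intersected with a bounded set, can be made small in diameter provided one first peels finitely many coordinate blocks (getting past the $\omega^2$ in each of finitely many coordinates, which accounts for one factor of $\omega$ per coordinate, and the outer $\omega$ enumerates how many coordinates carry significant mass). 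A secondary, more minor point is pinning down the exact lower bound $\omega^2$ in the infinite case; if a clean reference for "$\F(M)$ infinite-dimensional $\Rightarrow D(\F(M))\ge\omega^2$" is awkward, one can instead note $\F(M)$ contains $\ell_1$ isometrically (any $1$-separated sequence going to the basepoint spans an isometric $\ell_1$) and use $D(\ell_1)=\omega^2$ together with the fact that $D$ does not decrease when passing to a subspace.
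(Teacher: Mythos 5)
Your reduction of the dichotomy to the main estimate, and your lower bound $\omega^2\leq D(\Free(M))$ via a copy of $\ell_1$ with $D(\ell_1)=\omega^2$, match the paper (modulo the parenthetical ``$1$-separated sequence going to the basepoint'', which cannot exist in a uniformly discrete space; an isomorphic copy of $\ell_1$ suffices anyway). The genuine gap is the step you yourself flag as the main obstacle: the $\ell_1$-sum estimate $D\big(\big(\sum_n X_n\big)_{\ell_1}\big)\leq\alpha\cdot\omega$ when $D(X_n)\leq\alpha$. This is not in the paper, is not a formal analogue of Proposition~\ref{prop: fragmentabilty index of l_1 sums}, and your proposed justification --- ``a slice of the sum, when intersected with a bounded set, is controlled by slices of finitely many coordinates'' --- is exactly where the argument breaks. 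The dual of $\big(\sum_n X_n\big)_{\ell_1}$ is the $\ell_\infty$-sum of the duals, so the single functional defining a slice acts with full strength on \emph{every} coordinate; there is no finite localization. Moreover, the transfer argument for the weak topology (compare Lemma~\ref{l:PhiVsDForEllOneSums}) crucially uses the freedom to \emph{add one more functional} to the weakly open set so that membership forces the finite-block projection to have norm close to $1$; for slices, where only one functional is allowed, this move is unavailable, and membership of $g$ in the pulled-back slice gives no control of $\norm{g-\Pi_N g}$. So the induction you sketch does not go through, and the validity of the $D$-version of the $\ell_1$-sum lemma is not established by anything you cite.

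The paper circumvents precisely this difficulty by never running a slice derivation on the $\ell_1$-sum. It uses Lancien's inequality $D(X)\leq\Phi(L_2(X))$ (Lemma~\ref{prop: link D(X) Phi(L_2(X))}) to replace the dentability index of $Z=\big(\sum_k\F(M_k)\big)_{\ell_1}$ by the \emph{weak-fragmentability} index of $L_2(Z)$, where finitely many functionals may be used. For the finite blocks $Z_N\cong\ell_1(\Gamma)$ it does not invoke $D(\ell_1(\Gamma))\leq\omega^2$ at all, but rather $\Phi(L_2(\ell_1(\Gamma)))\leq Sz(L_2(c_0(\Gamma)))\leq\omega^2$, which rests on the computation $Sz(L_2(c_0))=\omega^2$ from \cite{HLP} together with separable determination of the Szlenk index; then Lemma~\ref{l:PhiVsDForEllOneSums} plus the standard shell-peeling argument yields $\Phi(L_2(Z))\leq\omega^2\cdot\omega=\omega^3$, hence $D(\Free(M))\leq D(Z)\leq\omega^3$. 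To repair your proposal you would either have to prove your $\ell_1$-sum lemma for $D$ (which requires a new idea, not the $\Phi$ argument transposed), or follow the paper's detour through $L_2$.
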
  

\begin{proof} 
The part concerning infinite metric spaces follows from the first estimate and from the the following well known facts:
\begin{itemize}
    \item for a Banach space $X$, $D(X)=\infty$ or $D(X)=\omega^\alpha$, for some ordinal $\alpha$ (see \cite{Lancien2006} for instance),
    \item a non-trivial Banach space $X$ is super-reflexive if and only if $D(X)=\omega$ ($D(X)=1$ only in trivial case $X=\{0\}$).
\end{itemize} 
Indeed, if $M$ is infinite, then it is well known and easily seen that $\Free(M)$ contains a copy of $\ell_1$.
Thus $\omega^2=D(\ell_1)\leq D(\Free(M))\leq \omega^3$.
(Here we have also used that $\omega^2\leq D(\ell_1)\leq Dz(c_0)=\omega^2$).

It remains to prove that $D(\Free(M))\leq \omega^3$ when $M$ is uniformly discrete, which will be a consequence of a number of intermediate results of independent interest.
\end{proof}

To obtain this upper bound we draw inspiration from the proof of Theorem 2 in \cite{HLP}. Given a Banach space $X$, let $L_2(X)$ be the space of all (equivalence classes) of Bochner measurable functions $f:[0,1] \to X$ with respect to the Lebesgue measure $\lambda$ on $[0,1]$ such that $\int_{[0,1]} \|f\|^2_X\,d\lambda <\infty$, equipped with the norm 
$$\|f\|_{L_2(X)}=\Big(\int_{[0,1]} \|f\|^2_X\,d\lambda\Big )^{1/2}.$$  

We will use the following lemma which has been proved in~\cite[Lemma 2.6]{Lancien1995}.

\begin{lemma}\label{prop: link D(X) Phi(L_2(X))}
    Let $X$ be any Banach space. Then $D(X) \leq \Phi(L_2(X))$. 
\end{lemma}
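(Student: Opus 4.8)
The plan is to build, from a bounded closed set $C \subseteq B_X$ witnessing a large dentability derivation, a corresponding bounded set in $L_2(X)$ witnessing an equally large weak-fragmentability derivation; the natural candidate is the set of $X$-valued step functions (or simply the "cylindrical" functions of a single vector), and the engine that converts slices into weak-open sets is the fact that $L_2(X)^* = L_2(X^*)$ (when $X^*$ has the RNP; otherwise there is nothing to prove since then $\Phi(L_2(X))=\infty$, as $L_2(X) \supseteq X$ fails RNP, hence fails PCP — actually one should be slightly careful here and argue directly). More precisely, I would first reduce to the case $D(X)<\omega_1$, i.e. $X$ separable with the RNP; if $D(X)=\infty$ one checks that $\Phi(L_2(X))=\infty$ too, because a non-dentable bounded set $C\subseteq X$, viewed as constant functions in $L_2(X)$, is not weakly fragmentable — any weak-open $V\subseteq L_2(X)$ meeting the constants restricts to a weak-open subset of $X$ via the isometric embedding $x\mapsto x\mathbbm 1_{[0,1]}$ and the conditional-expectation projection, so small weak-diameter in $L_2(X)$ would give small weak-diameter in $X$.

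The heart of the argument is a transfinite induction showing
\[
d_\eps^\alpha(C)\times\{[0,1]\} \subseteq \sigma_\eps^\alpha\big(\widehat C\big)
\]
for a suitable bounded set $\widehat C\subseteq L_2(X)$ built from $C$, where I identify $x\in X$ with the constant function $x\mathbbm 1_{[0,1]}$. The key step is the successor case: suppose $f\equiv x$ is constant with $x\in d_\eps^{\alpha+1}(C)$ and let $V$ be a weak-open neighbourhood of $f$ in $L_2(X)$, determined by finitely many functionals $\varphi_1,\dots,\varphi_k\in L_2(X^*)=L_2(X)^*$. Approximating the $\varphi_i$ by simple functions and refining, I may assume they are constant on each piece of a finite partition $[0,1]=\bigsqcup_j I_j$. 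On each $I_j$ the constraint "$\langle \varphi_i, f\rangle$ close to $\langle\varphi_i, x\rangle$" translates, after averaging, into finitely many linear constraints on $\int_{I_j} f$, i.e. into membership of $\frac{1}{|I_j|}\int_{I_j}f$ in a weak-open neighbourhood of $x$. Since $x\in d_\eps^{\alpha+1}(C)=d_\eps'(d_\eps^\alpha(C))$, every open slice — in particular every weak-open set cut out by these finitely many linear functionals — containing $x$ has intersection with $d_\eps^\alpha(C)$ of diameter $\ge\eps$; so on each $I_j$ one can pick two values $y_j^0,y_j^1\in d_\eps^\alpha(C)$ with $\|y_j^0-y_j^1\|\ge\eps$ (approximately), while keeping the average constraints. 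Assembling $g^\nu=\sum_j y_j^{\nu_j}\mathbbm 1_{I_j}$ for $\nu\in\{0,1\}^{\{j\}}$ gives functions in $V\cap \widehat C$ — and by the induction hypothesis in $\sigma_\eps^\alpha(\widehat C)$ — whose mutual $L_2(X)$-distances can be made $\ge \eps'$ for some fixed $\eps'$ comparable to $\eps$ (averaging the pointwise gaps). This forces $\operatorname{diam}(V\cap\sigma_{\eps'}^\alpha(\widehat C))\ge \eps'$, hence $f\in\sigma_{\eps'}^{\alpha+1}(\widehat C)$. The limit-ordinal case is immediate from the definitions, and the base case is trivial.

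Running this through all $\alpha<D(X,\eps)$ shows $\sigma_{\eps'}^\alpha(B_{L_2(X)})\neq\emptyset$ for $\alpha$ up to $D(X,\eps)$ (after the usual rescaling to land inside $B_{L_2(X)}$, using that $\widehat C\subseteq B_{L_2(X)}$ since $C\subseteq B_X$), so $\Phi(L_2(X),\eps')\ge D(X,\eps)$; taking the supremum over $\eps$ (and absorbing the $\eps\mapsto\eps'$ loss, which is harmless since both indices are suprema over all positive $\eps$) yields $\Phi(L_2(X))\ge D(X)$.

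I expect the main obstacle to be the successor step: specifically, the bookkeeping needed to (i) reduce arbitrary $L_2(X^*)$-functionals to simple functions adapted to a common partition without losing control, and (ii) verify that the finitely many averaged linear constraints on each $I_j$ genuinely form a weak-open set in $X$ to which the definition of $d_\eps'$ applies, while simultaneously guaranteeing the two chosen values per interval are both far apart \emph{and} admissible for $V$. The "mid-point"/averaging mechanism of Lemma~\ref{lem:midpointtrick} and Proposition~\ref{prop:conv_comb} is exactly the right tool to control diameters through convex combinations here, and the Radon--Nikod\'ym identification $L_2(X)^*=L_2(X^*)$ (valid once we have reduced to $X$ Asplund-dual-type situations, or handled directly on simple functions) is what makes weak-open sets in $L_2(X)$ decompose coordinatewise.
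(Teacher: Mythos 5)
The paper does not actually prove this lemma; it quotes it from \cite[Lemma 2.6]{Lancien1995}. Your overall strategy (a transfinite induction transferring points of $d_\eps^\alpha(B_X)$ to points of $\sigma_{\eps'}^\alpha(B_{L_2(X)})$ via $X$-valued step functions) is indeed the strategy of that proof, but your successor step contains a genuine error. You assert that, because $x\in d_\eps^{\alpha+1}(C)$, \emph{every weak-open set cut out by finitely many functionals} and containing $x$ meets $d_\eps^\alpha(C)$ in a set of diameter $\geq\eps$. That is the defining condition of the \emph{weak-fragmentability} derivation, not of the dentability derivation: $x\in d_\eps^{\alpha+1}(C)$ only controls single open slices (half-spaces), and an intersection of finitely many half-spaces is not a slice, so ``in particular'' goes the wrong way. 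If this implication were valid, your argument run with constant functions alone would give $D(X)\leq\Phi(X)$ for every $X$, which is false in general (the present paper itself exhibits uniformly discrete $M$ with $\Phi(\Free(M))\le\omega^2$ but $D(\Free(M))=\omega^3$, and any space with PCP but without RNP gives $\Phi<\infty=D$). The correct bridge from slices to general weak-open sets is convexity: since every slice through $x$ has $\eps$-large trace on $d_\eps^\alpha(C)$, Hahn--Banach separation yields (up to constants) $x\in\cconv\bigl(d_\eps^\alpha(C)\cap\set{y:\norm{y-x}\geq\eps/2}\bigr)$, and one then takes step functions oscillating among finitely many such values $y_i$ on finer and finer dyadic partitions, so that all the relevant conditional averages are close to $x$; it is this averaging, not a choice of ``two far-apart admissible values per interval'', that forces the functions into an arbitrary finitely-determined weak neighbourhood of the constant function $x$ while keeping them at $L_2(X)$-distance about $\eps/2$ from it.

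A second, related gap: your induction hypothesis concerns only constant functions, yet in the successor step you invoke it to place the assembled step functions $g^\nu$ (whose values lie in $d_\eps^\alpha(C)$ but which are not constant) inside $\sigma_\eps^\alpha(\hat C)$. To close the induction you must prove the stronger statement for a class stable under the construction, e.g.\ all simple functions with values in $d_\eps^\alpha(B_X)$, which is how the martingale-type argument of \cite{Lancien1995} proceeds. Two smaller points: your treatment of the case $D(X)=\infty$ is also unsound, since a non-dentable bounded set can perfectly well be weakly fragmentable (this is again the RNP/PCP distinction); no such case distinction is needed once the ordinal inequality is established for all $\alpha$. And the identification $L_2(X)^*=L_2(X^*)$ is not needed: the oscillating functions take values in a fixed finite-dimensional subspace $F$ of $X$, so weak convergence to the conditional expectation can be checked in $L_2(F)$.
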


In order to bound above $\Phi(L_2(\Free(D(M))))$ we need the following lemma,  which is an adaptation of a known technical trick (see for instance Lemma 6 in \cite{HLP}).

\begin{lemma}
\label{l:PhiVsDForEllOneSums}
Let $X_n$, $n \in \N$, be Banach spaces. 
Let $Z=(\sum_{k\in \N} X_k)_{\ell_1}$ and for any $N\in \N$ let $Z_N=(\sum_{k=1}^N X_k)_{\ell_1}$ with $P_N:Z \to Z_N$, the canonical projection. 
Let $\Pi_N:L_2(Z) \to L_2(Z_N)$ be defined by $\Pi_N(f)(t)=P_n(f(t))$ for every $f \in L_2(Z)$ and $t \in [0,1]$.

Let $\varepsilon \in (0, 1)$, $f \in B_{L_2(Z)}$ and $N \in \N$ be such that $\norm{\Pi_N f}^2 > 1 - \varepsilon^2$. 
Then, for every ordinal $\alpha$,
\[
    f \in \sigma_{3\varepsilon}^{\alpha}({B_{L_2(Z)}})\ \Longrightarrow\ \Pi_N f \in \sigma_{\varepsilon}^{\alpha}({B_{L_2(Z_N)}}).
\]
\end{lemma}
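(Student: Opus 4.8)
The statement is a transfer principle: if the high-dimensional function $f$ survives $\alpha$ derivations in the weak-fragmentability peeling of $B_{L_2(Z)}$, then its finite-dimensional truncation $\Pi_N f$ survives $\alpha$ derivations in $B_{L_2(Z_N)}$. The natural strategy is transfinite induction on $\alpha$, proving it simultaneously for all admissible $f$ (those with $\|\Pi_N f\|^2 > 1-\varepsilon^2$). The base case $\alpha = 0$ is trivial since both sets are the respective unit balls and $\Pi_N f \in B_{L_2(Z_N)}$ because $\|\Pi_N\| \le 1$ (the projections $P_N$ on an $\ell_1$-sum are norm-one). The limit stage is also immediate from the definition of $\sigma_\varepsilon^\beta$ as an intersection, provided the admissibility hypothesis $\|\Pi_N f\|^2 > 1-\varepsilon^2$ is preserved — which it is, since it is a hypothesis on $f$ itself, not on where $f$ sits in the derivation.

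\textbf{Successor step.} Assume the implication holds at level $\alpha$; let $f \in \sigma_{3\varepsilon}^{\alpha+1}(B_{L_2(Z)})$ with $\|\Pi_N f\|^2 > 1-\varepsilon^2$. We must show $\Pi_N f \in \sigma_\varepsilon^{\alpha+1}(B_{L_2(Z_N)})$, i.e. that every weakly open $V \subseteq L_2(Z_N)$ containing $\Pi_N f$ satisfies $\diam(V \cap \sigma_\varepsilon^\alpha(B_{L_2(Z_N)})) \ge \varepsilon$. Given such a $V$, its preimage $\Pi_N^{-1}(V)$ is weakly open in $L_2(Z)$ (as $\Pi_N$ is a bounded linear, hence weak-weak continuous, operator) and contains $f$. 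Since $f \in \sigma_{3\varepsilon}^{\alpha+1}(B_{L_2(Z)})$, we get $\diam(\Pi_N^{-1}(V) \cap \sigma_{3\varepsilon}^\alpha(B_{L_2(Z)})) \ge 3\varepsilon$, so we can pick $g, h$ in this intersection with $\|g - h\| > 3\varepsilon - \text{(small)}$; more cleanly, for any $\eta > 0$ pick $g,h$ with $\|g-h\| \ge 3\varepsilon - \eta$. The images $\Pi_N g, \Pi_N h$ lie in $V$. Two things remain: (i) show $\Pi_N g, \Pi_N h \in \sigma_\varepsilon^\alpha(B_{L_2(Z_N)})$, and (ii) control $\|\Pi_N g - \Pi_N h\|$ from below.

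\textbf{The main obstacle} is exactly these last two points, and they are linked by the need to verify the admissibility hypothesis for $g$ and $h$ so as to invoke the inductive hypothesis. The point is that $g,h$ lie in $\Pi_N^{-1}(V)$, and $V$ is a small weak neighborhood of $\Pi_N f$; but a weak neighborhood need not control the norm, so a priori $\|\Pi_N g - \Pi_N f\|$ could be large. The resolution is to shrink $V$ at the outset: we may assume $V$ is contained in a weak neighborhood of $\Pi_N f$ on which a suitable norming functional (or a biorthogonal-type functional detecting the mass on $Z_N$) varies by less than, say, $\varepsilon^2$ — since $L_2(Z_N)$ is finite-rank in the "$Z$-direction", the quantity $\|\Pi_N(\cdot)\|^2$, or at least a linear functional close to it near $\Pi_N f$, can be made weakly continuous enough. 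Concretely: choose $\phi \in S_{L_2(Z_N)^*}$ nearly norming $\Pi_N f$, so $\phi(\Pi_N f) > (1-\varepsilon^2)^{1/2}$ roughly; shrinking $V$ inside $\{w : \phi(w) > 1 - \varepsilon^2 - \delta\}$ forces $\|\Pi_N g\| \ge \phi(\Pi_N g) > 1 - \varepsilon^2 - \delta$, hence $\|\Pi_N g\|^2 > 1 - \varepsilon^2$ after absorbing constants; likewise for $h$. Then the inductive hypothesis applies to $g$ and to $h$ (their full versions survive $\alpha$ derivations since $\sigma^\alpha \supseteq \sigma^{\alpha+1} \ni$ ... wait — $g, h \in \sigma_{3\varepsilon}^\alpha(B_{L_2(Z)})$ already), giving $\Pi_N g, \Pi_N h \in \sigma_\varepsilon^\alpha(B_{L_2(Z_N)})$, settling (i). For (ii): since $\|g\|, \|h\| \le 1$ and $\|\Pi_N g\|, \|\Pi_N h\| > (1-\varepsilon^2)^{1/2}$, the complementary parts $(I - \Pi_N)g$ and $(I-\Pi_N)h$ have $\ell_1$-norm at most $1 - (1-\varepsilon^2)^{1/2} \le \varepsilon^2/(\text{const})$; this is where the factor $3$ is spent — by the triangle inequality in the $\ell_1$-direction, $\|\Pi_N g - \Pi_N h\| \ge \|g - h\| - \|(I-\Pi_N)g\| - \|(I-\Pi_N)h\| \ge (3\varepsilon - \eta) - 2\varepsilon^2/(\text{const}) \ge \varepsilon$ for $\eta$ small and $\varepsilon < 1$. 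Combining, $V \cap \sigma_\varepsilon^\alpha(B_{L_2(Z_N)})$ contains two points at distance $\ge \varepsilon$, so $\Pi_N f \in \sigma_\varepsilon^{\alpha+1}(B_{L_2(Z_N)})$, completing the induction. The one delicate bookkeeping issue to get right is the exact arithmetic relating $\varepsilon^2$, the slice depth $1 - \varepsilon^2$, and the factor $3$, and choosing the shrinking of $V$ before extracting $g, h$ rather than after.
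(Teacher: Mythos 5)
Your overall scheme is essentially the paper's proof read in the direct rather than contrapositive direction: transfinite induction, pulling the weak neighbourhood back through $\Pi_N$, enforcing a lower bound on $\|\Pi_N g\|$ by adjoining one more functional (your ``shrinking of $V$'' with a norming $\phi$ is exactly the paper's extra slice condition $\alpha_1>\sqrt{1-\varepsilon^2}$), and spending the factor $3$ through a triangle inequality. Two of your quantitative steps, however, are wrong as written, and one of them is exactly at the point you yourself flag as delicate. First, the cut level: to get the admissibility hypothesis $\|\Pi_N g\|^2>1-\varepsilon^2$ you must cut at level $\sqrt{1-\varepsilon^2}$ (take $\phi$ norming $\Pi_N f$, so $\phi(\Pi_N f)=\|\Pi_N f\|>\sqrt{1-\varepsilon^2}$); cutting at $1-\varepsilon^2-\delta$ only yields $\|\Pi_N g\|>1-\varepsilon^2-\delta$, which is weaker and cannot be ``absorbed''. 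Second, and more seriously, your bound $\|(I-\Pi_N)g\|\leq 1-\sqrt{1-\varepsilon^2}\approx \varepsilon^2/2$ is false. The additivity $\|z\|=\|P_Nz\|+\|z-P_Nz\|$ holds pointwise in the $\ell_1$-sum $Z$, but it does not pass to the $L_2$-norms as additivity: in $L_2(Z)$ one only gets, integrating $(a+b)^2\geq a^2+b^2$,
\[
\|g\|_{L_2(Z)}^2\ \geq\ \|\Pi_N g\|^2+\|g-\Pi_N g\|^2 ,
\]
(this is inequality \eqref{eq:lower2} in the paper), whence $\|g-\Pi_N g\|<\varepsilon$ and nothing better; a function whose $Z_N$-part and complementary part are supported on disjoint subsets of $[0,1]$, with $\|\Pi_N g\|=\sqrt{1-\varepsilon^2}$ and $\|g-\Pi_N g\|=\varepsilon$, shows your bound fails by an order of magnitude.

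With the correct bound the argument still closes, but with no slack: from $\|g-h\|>3\varepsilon-\eta$ and $\|g-\Pi_Ng\|,\|h-\Pi_Nh\|<\varepsilon$ you only get $\|\Pi_Ng-\Pi_Nh\|>\varepsilon-\eta$, so your final line ``$\geq\varepsilon$ for $\eta$ small'' is not literally available for a single pair; you must instead let $\eta\to 0$ and conclude $\diam\bigl(V\cap\sigma_\varepsilon^\alpha(B_{L_2(Z_N)})\bigr)\geq\varepsilon$ as a supremum, which is exactly what survival under the derivation requires (removal needs diameter $<\varepsilon$). So the route is sound and coincides with the paper's up to contraposition, but the estimate you rely on is incorrect and must be replaced by the squared inequality above, after which the arithmetic $3\varepsilon-2\varepsilon=\varepsilon$ is tight rather than comfortable.
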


\begin{proof}
Let us start by observing that it is routine to check that $\Pi_Nf\in L_2(Z_N)$ and so, 
clearly, $\Pi_N$ is a norm one projection from $L_2(Z)$ onto $L_2(Z_N)$. 
Moreover, since $\norm{z}=\norm{P_{N}z}+\norm{z-P_N z}$ for every $z\in Z$, we have
\begin{equation}\label{eq:lower2}
\forall f \in L_2(Z),\ \ \|f\|_{L_2(Z)}^2 \ge   \|\Pi_Nf\|_{L_2(Z)}^2 + \|f-\Pi_Nf\|_{L_2(Z)}^2.  
\end{equation}

We will prove the lemma using a transfinite induction on $\alpha$. 
Since $\norm{\Pi_N}=1$, the statement is clearly true for $\alpha = 0$. 
Assume it is true for every $\beta < \alpha$.  
If $\alpha$ is a limit ordinal, it is also true for $\alpha$ by taking the intersection on $\beta < \alpha$. 
So assume now that $\alpha = \beta + 1$, and let $f \in B_{L_2(Z)}$ and $N \in \N$ such that $\norm{\Pi_N f}^2 > 1 - \varepsilon^2$. 
We proceed by contraposition: assume that $\Pi_N f \notin \sigma_{\varepsilon}^{\alpha}({B_{L_2(Z_N)}})$, and let us show that $f \notin \sigma_{3\varepsilon}^{\alpha}({B_{L_2(Z)}}) = \sigma_{3\varepsilon}\left(\sigma_{3\varepsilon}^{\beta}(B_{L_2(Z)})\right)$. 
If $f \notin \sigma_{3\varepsilon}^{\beta}({B_{L_2(Z)}})$, we are done, so we may assume that $f \in \sigma_{3\varepsilon}^{\beta}({B_{L_2(Z)}})$. 
Then the induction hypothesis implies that $\Pi_N f \in \sigma_{\varepsilon}^{\beta}({B_{L_2(Z_N)}})$, and since $\Pi_N f \notin \sigma_{\varepsilon}^{\beta+1}({B_{L_2(Z_N)}})$, there exists $V$, a weakly open subset of $L_2(Z_N)$ containing $\Pi_N f$ such that $\diam\left(V \cap \sigma_{\varepsilon}^{\beta}({B_{L_2(Z_N)}})\right) < \varepsilon$. 
We may assume that $V$ is of the form 
    \[
    V = \{g \in L_2(Z_N): \forall i \in \{1, \dots, r\},\, \duality{\phi_i,g} > \alpha_i\}
    \]
with $r\in \N$, $\alpha_i \in \R$ and $\phi_i \in L_2(Z_N)^*$. 
   Since $\norm{\Pi_N f} > \sqrt{1 - \varepsilon^2}$, we may also assume, adding one more functional if necessary, that $\alpha_1 > \sqrt{1 - \varepsilon^2}$. 
   This last assumption implies that $V \cap (\sqrt{1 - \varepsilon^2}) B_{L_2(Z_N)} = \emptyset$.
   We define the following weakly open subset of $L_2(Z)$: 
    \[U = \Big\{g \in L_2(Z): \forall i \in \{1, \dots, r\},\, \duality{\Pi_N^*\phi_i,g} > \alpha_i\Big\}=\Pi_N^{-1}(V).\] 
    We have that $g \in U$ if and only if $\Pi_N g \in V$. 
    In particular, $U$ is a weakly open set containing $f$, so $f \in U \cap \sigma_{3\varepsilon}^{\beta}({B_{L_2(Z)}})$ and to conclude that $f \notin \sigma_{3\varepsilon}^{\beta+1}({B_{L_2(Z)}})$, it will be enough to show that $\diam\left(U \cap \sigma_{3\varepsilon}^{\beta}({B_{L_2(Z)}})\right) < 3 \varepsilon$.

    So let us consider $g, g' \in U \cap \sigma_{3\varepsilon}^{\beta}({B_{L_2(Z)}})$. 
    Then $\Pi_N g, \Pi_N g'$ belong to $V$ and thus are of norm greater than $\sqrt{1 - \varepsilon^2}$. 
    Since $g, g'$ are in the unit ball, it follows from \eqref{eq:lower2} that $\norm{g - \Pi_N g} < \varepsilon$ and $\norm{g' - \Pi_N g'} < \varepsilon$. Moreover, the induction hypothesis implies that 
    \[
    \begin{split}
        \norm{g - g'} &\leq \norm{g - \Pi_N g} + \norm{\Pi_N g - \Pi_N g'} + \norm{\Pi_N g' - g'} \\
        &\leq 2 \varepsilon + \diam\left(V \cap \sigma_{\varepsilon}^{\beta}({B_{L_2(Z_N)}})\right) < 3\varepsilon.
    \end{split}
    \]
    Therefore, $\diam\left(U \cap \sigma_{3\varepsilon}^{\beta}({B_{L_2(Z)}})\right) < 3\varepsilon$. This finishes the proof of this lemma.
\end{proof}

\begin{proof}[Proof of the fact that $D(\Free(M))\leq \omega^3$ when $M$ is uniformly discrete]

Let $M$ be uniformly discrete.
Then Kalton's decomposition (Proposition \ref{Kalton's decomposition}) implies that $\Free(M)$ is isomorphic to a subspace of $Z=(\sum_{k \in \N} \F(M_k))_{\ell_1}$.
We will show that $\Phi(L_2(Z))\leq \omega^3$ which, by Lemma \ref{prop: link D(X) Phi(L_2(X))}, will finish the proof. 

First, for $N\in \N$ denote $Z_N=(\sum_{k =1}^N \F(M_k))_{\ell_1}$. Since $M_k$ is  uniformly discrete and bounded for each $k\leq N$, we have that $Z_N$ is isomorphic to $\ell_1(\Gamma)$, for some index set $\Gamma$. Then $\phi(L_2(\ell_1(\Gamma)))\le Sz(L_2(c_0(\Gamma)))$ (recall that the dual of $L_2(c_0(\Gamma))$ is isometric to $L_2(\ell_1)$, because $c_0(\Gamma)^*$ is isometric to $\ell_1(\Gamma)$ and thus has the RNP). 
Now, since the Bochner measurable functions are essentially separably valued, it is easy to check that any separable subspace $X$ of $L_2(c_0(\Gamma))$ is isometric to a subspace of $L_2(c_0(\Delta))$, for some countable subset $\Delta$ of $\Gamma$.  It has been proved in \cite{HLP} that $\Sz(L_2(c_0)) = \omega^2$. Thus $Sz(X)\le \omega^2$. Then it follows from the separable determination of the Szlenk index \cite{Lancien1996} that $Sz(L_2(c_0(\Gamma)))\le \omega^2$. 
Put all together we have $\Phi(L_2(Z_N))\leq \omega^2$ for every $N \in \N$.

 
Now, let $\varepsilon \in (0, 1)$ and assume that $f \in B_{L_2(Z)}$ is such that $\norm{f} > \sqrt{1 - \varepsilon^2}$. 
By Lebesgue's dominated convergence theorem, there exists $N \in \N$ such that $\norm{\Pi_N f} > \sqrt{1 - \varepsilon^2}$. 
Since $\Phi(L_2(Z_n))\le \omega^2$, we get that $\sigma_{\varepsilon/3}^{\omega^2}({B_{L_2(Z_N)}}) = \emptyset$. 
So, by Lemma~\ref{l:PhiVsDForEllOneSums}, $f \notin \sigma_{\varepsilon}^{\omega^2}({B_{L_2(Z)}})$. 
Hence, $\sigma_{\varepsilon}^{\omega^2}({B_{L_2(Z)}})\subset (\sqrt{1 - \varepsilon^2}) B_{L_2(Z)}$, and 
an analogue of Lemma 3 in \cite{HLP} (see also Proposition~3 in \cite{Lancien2006}) yields $\Phi(L_2(Z)) \leq \omega^2\cdot\omega = \omega^3$, which concludes the proof.
    \end{proof}

We now proceed to show that this upper bound is optimal. We will use the same examples as those given for the weak-fragmentability index, based on the  use of the family $(D_n^\omega)_{n\in \N}$. The key general statement is the following.

\begin{prop}\label{p:SufficientForDentabilityOmegaThree} 
Assume that the family $(D_n^\omega)_{n\in N}$ embeds with the same distortion into a metric space $(M,d)$. Then 
$D(\F(M))\geq \omega^3$. 
\end{prop}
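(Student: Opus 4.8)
The plan is to mirror, on the dentability side, the strategy that was used for the weak-fragmentability index in Proposition~\ref{prop: Phi_free_diamonds}, replacing the input from \cite{Basset} about $\sigma$-derivations of $\F(D_n^\omega)$ with an analogous statement about $d$-derivations. So the first step is to establish the combinatorial heart of the matter: a lower estimate for the dentability index of $\F(D_n^\omega)$, namely that $d_c^{\,\omega^2\cdot n}(B_{\F(D_n^\omega)})\neq\emptyset$ for some universal constant $c>0$ independent of $n$ (morally $c$ close to $1$). The natural way to produce points surviving many slice-derivations is Lemma~\ref{lem:midpointtrick}: if $x,y$ lie in a previously derived set and $\|x-y\|\geq 2c$, then their midpoint survives one more $d_c$-derivation. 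One iterates this along the recursive diamond structure — each level $D_1^\omega$ glued in contributes a block of $\omega^2$ to the ordinal count (the $\omega^2$ coming from the countably-branching first diamond $D_1^\omega$, whose free space already has large dentability index because it is built on an infinite $\ell_1$-like branching). Concretely, I expect one shows by induction on $n$ that a suitable convex combination of molecules associated to the subdiamonds of $D_n^\omega$ survives $\omega^2\cdot n$ derivations, using the bipartite/branching structure at each scale together with a stability-under-convex-combinations argument in the spirit of Proposition~\ref{prop:conv_comb}.

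The second step is the transfer from $\F(D_n^\omega)$ to $\F(M)$. Since $(D_n^\omega)_{n\in\N}$ embeds with equal distortion into $M$, there is $C\geq 1$ so that for each $n$, $D_n^\omega$ bi-Lipschitz embeds into $M$ with distortion $\leq C$, hence (by the factorization/extension results recalled in the preliminaries, using McShane--Whitney to identify $\F$ of a subset with the corresponding subspace) $\F(D_n^\omega)$ linearly embeds into $\F(M)$ with distortion $\leq C$. Slice-derivations are isomorphic invariants with the expected quantitative dependence on the distortion: a $(1,C)$-embedding takes $d_\eps^\alpha$ of the domain ball into $d_{\eps/C}^\alpha$ of the (scaled) target ball. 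Combined with Proposition~\ref{prop:conv_comb} (to recentre at the origin of $\F(M)$, passing from $d_{c/C}^{\alpha}(B)$ being nonempty to $0\in d_{c/2C}^{\alpha}(B_{\F(M)})$), one gets $0\in d_{c/2C}^{\,\omega^2\cdot n}(B_{\F(M)})$ for every $n\in\N$. Since this holds for all $n$, we get $d_{c/2C}^{\,\omega^2\cdot\omega}(B_{\F(M)})=d_{c/2C}^{\,\omega^3}(B_{\F(M)})\neq\emptyset$ by taking the intersection over $n$ (the origin persists at every finite level $\omega^2\cdot n$, hence at the limit $\omega^3$), and therefore $D(\F(M))\geq\omega^3$.

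I would actually try to shortcut the whole first step by reusing what is already available: we know from Proposition~\ref{prop: Phi_free_diamonds} and its proof that $0\in\sigma_{1/2C}^{\omega^2}(B_{\F(M)})$ under exactly this hypothesis, i.e. the weak-fragmentability index of $\F(M)$ is $\geq\omega^2$. The missing gap between $\omega^2$ and $\omega^3$ should be closed by the same $L_2$-trick used for the upper bound, run in reverse: Lemma~\ref{prop: link D(X) Phi(L_2(X))} gives $D(X)\leq\Phi(L_2(X))$, but one can also feed the diamonds into $L_2(\F(M))$ and exploit that $(D_n^\omega)$ sitting inside $M$ with equal distortion forces, via an $\ell_1$-summing argument over many independent copies placed on disjoint pieces of $[0,1]$, that $\Phi(L_2(\F(M)))\geq\omega^3$; then one inverts the inequality at the level of the relevant derivation to read off $D(\F(M))\geq\omega^3$. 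Either route works; the self-contained diamond-counting route is cleaner to state.

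The main obstacle I anticipate is the first step: getting the sharp $\omega^2\cdot n$ lower bound for the slice-derivation of $B_{\F(D_n^\omega)}$ with a constant $c$ that does \emph{not} decay with $n$. Slices are much smaller than weak-open sets, so one cannot simply quote the weak-fragmentability computation; one must verify that the specific convex combinations of molecules chosen in the diamond really do have pairwise distances bounded below by a fixed $2c$ at each stage, and that the recursive gluing of diamonds adds ordinal blocks of size exactly $\omega^2$ rather than collapsing. The branching parameter being $\omega$ (countably many middle points) is what supplies the extra $\omega$ factor at each scale; making the transfinite bookkeeping precise — which ordinal a given convex combination survives to, and that the constant stays uniform — is the delicate part, and is presumably where the bulk of the authors' proof goes.
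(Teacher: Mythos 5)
Your plan hinges on the claim that $d_c^{\,\omega^2\cdot n}(B_{\F(D_n^\omega)})\neq\emptyset$ for a constant $c$ independent of $n$, i.e.\ that each diamond level contributes an ordinal block of size $\omega^2$. This is false, already for $n=1$: $D_n^\omega$ is a bounded, uniformly discrete, countable metric space, so $\F(D_n^\omega)$ is isomorphic to $\ell_1$, and since the dentability index is an isomorphic invariant and $D(\ell_1)=\omega^2$, one has $d_\varepsilon^{\omega^2}(B_{\F(D_n^\omega)})=\emptyset$ for every $\varepsilon>0$ and every $n$. The obstacle you flag at the end (``getting the sharp $\omega^2\cdot n$ lower bound with a constant that does not decay with $n$'') is therefore not a delicate bookkeeping issue but an impossibility. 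What the paper actually proves (Lemma~\ref{l:DentabilityLowerBound}) is that each diamond contributes only a block of size $\omega$: the molecule $m_{t_n,b_n}$ (suitably pushed into $\F(M)$) survives $\omega\cdot n$ slice derivations at a fixed level $A$, via the midpoint trick of Lemma~\ref{lem:midpointtrick} applied to averages over the $2^k$ middle branches. Taking all $n$ and using convexity and symmetry of the derived sets, this only yields $0\in d_A^{\omega^2}(B_{\F(M)})$, i.e.\ $D(\F(M))>\omega^2$. The essential idea missing from your proposal is the final jump: the paper invokes the structural theorem that the dentability index of a Banach space is always of the form $\omega^\alpha$ (cited from \cite{Lancien2006}), so $D(\F(M))>\omega^2$ automatically forces $D(\F(M))\geq\omega^3$. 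Without that input, no argument based solely on derivation counts inside the diamonds can reach $\omega^3$, precisely because the free spaces of the diamonds themselves have dentability index only $\omega^2$.

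Your proposed shortcut is also not viable as stated: the inequality $D(X)\leq\Phi(L_2(X))$ of Lemma~\ref{prop: link D(X) Phi(L_2(X))} goes the wrong way, so a lower bound $\Phi(L_2(\F(M)))\geq\omega^3$ (even granting the ``independent copies on disjoint pieces of $[0,1]$'' construction) gives no lower bound on $D(\F(M))$; ``inverting the inequality at the level of the relevant derivation'' is not justified and no converse estimate is available here. Your transfer step from $\F(D_n^\omega)$ to $\F(M)$ (distortion-controlled embeddings of free spaces, rescaling, recentring at the origin) is fine in spirit and matches the paper, which handles the scaling by noting that $\F(M,s_nd)$ is isometric to $\F(M,d)$ and recentres using convexity and symmetry of the derived sets rather than Proposition~\ref{prop:conv_comb}; but the correct exponent fed into that transfer is $\omega\cdot n$, not $\omega^2\cdot n$, and the conclusion must then pass through the $\omega^\alpha$-form theorem to reach $\omega^3$.
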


In the proof we will need the following lemma.

\begin{lemma}\label{l:DentabilityLowerBound}
Let us assume that a metric space $(M_n,d)$, a map $f:(D^\omega_n,d_n) \to (M,d)$ and a constant $A\leq 1$ satisfy
\[
Ad_n(x,y)\leq d(f(x),f(y))\leq d_n(x,y)
\]
for all $x,y \in D_n^\omega$. Then 
$$
f_{t_n,b_n}\in d_{A}^{\omega\cdot n}(B_{\F(M_n)}),
$$
where 
$$f_{t_n,b_n}=\frac{\delta(f(t_n))-\delta(f(b_n))}{d_n(t_n,b_n)}=m_{f(t_n),f(b_n)}\frac{d(f(t_n),f(b_n))}{d_n(t_n,b_n)}.$$ 
In particular, $\mol{t_n, b_n} \in d_1^{\omega\cdot n}(B_{\F(D_n^\omega)})$.
\end{lemma}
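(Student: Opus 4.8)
The plan is to run an induction on $n$, exploiting the recursive (self-similar) structure of the diamond graphs $D_n^\omega$ together with a midpoint/convexity argument inside free spaces. The base case $n=0$ (or $n=1$, depending on the convention) should be trivial: for $n=0$ there is nothing to derive, while for $n=1$ one uses that $t_1,b_1$ are at distance $2$ and that each of the two-edge paths through a midpoint $x_i$ realizes the distance, so $m_{f(t_1),f(b_1)}$ is seen to lie in $d_A^{\omega}(B_{\F(M)})$ by the first $\omega$ derivations. Concretely, Lemma~\ref{lem:midpointtrick} applied to the pair of molecules coming from the two halves of each branch $(i,\pm)$ gives that $f_{t_1,b_1}$ survives one derivation; taking $i\to\infty$ along the countably many branches forces it to survive $\omega$ derivations, since the branch molecules $m_{f(t_1),f(x_i)}$ are "spread out" (each branch contributes a fresh direction and the relevant slices can only capture finitely many of them with small diameter).

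For the inductive step, write $D_{n+1}^\omega$ as the union of its $2+\omega$ copies of $D_n^\omega$ sitting on the edges $(j,\pm)$ of the top-level $D_1^\omega$. For each middle vertex $x^j_{n+1}$ of the top diamond, the restriction of $f$ to the copy $D_n^{(j,+)}$ (resp. $D_n^{(j,-)}$) is a map satisfying the same two-sided estimate with the same constant $A$ (after the appropriate rescaling by $2^n$), so the induction hypothesis applies to each such copy and places the molecule $m_{f(t_{n+1}),f(x^j_{n+1})}$-type vector in $d_A^{\omega\cdot n}(B_{\F(M)})$ — using here the canonical isometric identification $\F(D_n^{(j,\pm)})\hookrightarrow\F(D_{n+1}^\omega)$ via McShane–Whitney recalled in the preliminaries, and the scaling invariance of the derivation sets noted after Proposition~\ref{prop:conv_comb}. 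Then $f_{t_{n+1},b_{n+1}}$ is, up to the scalar $\tfrac{d(f(t_{n+1}),f(b_{n+1}))}{d_{n+1}(t_{n+1},b_{n+1})}\le 1$, the average of the "upper half" vector and the "lower half" vector through any chosen middle vertex $x^j_{n+1}$; both halves live in $d_A^{\omega\cdot n}(B_{\F(M)})$, and their difference has norm $\ge 2A$ (this is where the lower estimate $Ad_{n+1}\le d\circ f$ is used, to see that the two half-molecules are genuinely far apart). By Lemma~\ref{lem:midpointtrick} this pushes $f_{t_{n+1},b_{n+1}}$ into $d_A^{\omega\cdot n+1}(B_{\F(M)})$. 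Finally, letting $j\to\infty$ over the countably many middle vertices of the top diamond and arguing as in the base case that no single slice can see more than finitely many of these "independent" branches with controlled diameter, we get that $f_{t_{n+1},b_{n+1}}$ survives $\omega\cdot n + k$ derivations for every $k$, hence lies in $d_A^{\omega\cdot n+\omega}(B_{\F(M)})=d_A^{\omega\cdot(n+1)}(B_{\F(M)})$.

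The last assertion is the special case $M=M_n=D_n^\omega$, $f=\mathrm{id}$, $A=1$, giving $\mol{t_n,b_n}\in d_1^{\omega\cdot n}(B_{\F(D_n^\omega)})$.

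The main obstacle I anticipate is making precise the "$\omega$ extra derivations per level from the countably many parallel branches" step: one must verify that for any open slice $S$ of $\F(M)$ containing the midpoint vector, and for all but finitely many branch indices $j$, the slice $S$ meets the previous derived set in a set of diameter $\ge A$. This requires a quantitative "spreading" fact about the branch molecules $m_{f(t),f(x^j)}$ — essentially that they form an $\ell_1^\omega$-like family up to the distortion — which should follow from the diamond metric structure (the paths through distinct middle vertices are "disjoint" so the corresponding molecules behave like disjointly supported elements) combined with the already-known result $\sigma_1^n(B_{\F(D_n^\omega)})\neq\emptyset$ / the facts cited from \cite{Basset}. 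Care is also needed with the bookkeeping of scaling factors $2^n$ versus the constant $A$, and with checking that the isometric embeddings $\F(D_n^{(j,\pm)})\subset\F(D_{n+1}^\omega)$ interact correctly with the derivation (which they do, since derivations are monotone under isometric linear embeddings onto norm-one complemented-like subspaces, or more elementarily, since a slice of the big space restricts to a slice of the small one).
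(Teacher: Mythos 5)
Your scaffolding (outer induction on $n$, the canonical isometric inclusion of the sub-diamond free spaces, the midpoint trick, and the hope of gaining ``$+\omega$'' derivations per level from the countably many middle vertices) is the same as the paper's, but the step you yourself flag as the main obstacle is precisely the missing idea, and the substitute you sketch does not work. Applying Lemma~\ref{lem:midpointtrick} through \emph{one} chosen middle vertex only yields a single extra derivation, i.e. $f_{t_{n+1},b_{n+1}}\in d_A^{\omega\cdot n+1}(B_{\F(M)})$. The claim that ``no slice can see more than finitely many branches'' is not a mechanism by which the ordinal index accumulates: to prove survival of $\omega\cdot n+k$ derivations for every $k$ you must exhibit, at each successor stage up to $k$, a pair of points which already lie in the previously derived set, are uniformly separated, and have the tracked vector as (a convex combination of) their midpoint. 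Your appeal to $\sigma_1^n(B_{\F(D_n^\omega)})\neq\emptyset$ from \cite{Basset} cannot supply this either: that fact concerns the weak-open derivation and only gives $n$ steps, whereas the whole point of the lemma is to reach $\omega\cdot n$ steps for the slice derivation. The same vagueness affects your base case $n=1$.

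The paper's missing device is a \emph{second, inner induction on $k$ over dyadic averages of branches}. Set $u_i=f_{t_n,x_n^i}$; by the outer induction each $u_i\in d_A^{\omega\cdot(n-1)}(B_{\F(M_n)})$. One then shows that for all $k$ and all $i_1<\dots<i_{2^k}$, the average $\frac{1}{2^k}\sum_j u_{i_j}$ lies in $d_A^{\omega\cdot(n-1)+k}(B_{\F(M_n)})$: given two disjoint blocks of $2^k$ indices with block averages $v,w$, both lie in $d_A^{\omega\cdot(n-1)+k}$ by the inner hypothesis, their midpoint is the average over the $2^{k+1}$ indices, and $\|v-w\|\ge 2A$ because there is a $1$-Lipschitz function taking a value $p$ at the images of one block and $p-2A\,2^{n-1}$ at the other (possible since distinct middle points are mapped at distance $\ge A\,2^n$); Lemma~\ref{lem:midpointtrick} then advances the inner induction by one. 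This is the quantitative ``$\ell_1$-like spreading'' you allude to, made precise by an explicit Lipschitz witness rather than by counting which branches a slice ``captures''. Finally, $f_{t_n,b_n}$ is the midpoint of the average of the top-half molecules and the average of the bottom-half molecules over the \emph{same} $2^k$ middle vertices, so by convexity of the derived sets it lies in $d_A^{\omega\cdot(n-1)+k}(B_{\F(M_n)})$ for every $k$, hence in $d_A^{\omega\cdot n}(B_{\F(M_n)})$. Without this averaging argument (or an equivalent), your proof stalls at $\omega\cdot n+1$.
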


\begin{proof} 

We prove the statement by induction on $n$. 
It is clearly true for $D_0^\omega$. 
So let us assume that, for $n\in \N$,  it is true for $D_{n-1}^\omega$ and let us prove it for $D_n^\omega$. 
We use the notation from Section \ref{sec:diamonds}: $t_n,b_n$ are the top and bottom points of $D_n^\omega$ and $x_n^0,x_n^1,\ldots,x_n^i,\ldots$ the ``middle points'' of $D_n^\omega$. 
Note that $d_n(t_n,x_n^i)=d_n(b_n,x_n^i)=2^{n-1}$ and $d_n(t_n,b_n)=d_n(x_n^i,x_n^j)=2^n$, for $i \neq j$.
We denote 
$$u_i=f_{t_n,x_n^i}=\frac{1}{2^{n-1}}(\delta(f(t_n))-\delta(f(x_n^i))).$$
Since $t_n$ and $x_n^i$ are top and bottom points of a bi-Lipschitz
copy of $D^\omega_{n-1}$ in $M_n$, we have by induction hypothesis that $u_i\in d_{A}^{\omega\cdot (n-1)}(B_{\F(M_n)})$. 
We will show that for all $k\ge 0$ and all $i_1<\cdots<i_{2^k}$
\begin{equation}\label{eq:claim}
 \frac{1}{2^k}\sum_{j=1}^{2^k} u_{i_j} \in  d_{A}^{\omega\cdot (n-1)+k}(B_{\F(M_n)}).
\end{equation}
The proof goes by a second induction (on $k$). 
By the assumption of the first induction (on $n$), it is true for $k=0$. So assume it is true for $k\ge 0$ and let us prove it for $k+1$. So let  $i_1<\cdots<i_{2^{k+1}}$. 
Denote 
$$v=\frac{1}{2^k}\sum_{j=1}^{2^k} u_{i_j}\ \ \text{and}\ \ w=\frac{1}{2^k}\sum_{j=2^k+1}^{2^{k+1}} u_{i_j}.$$
By induction hypothesis $v$ and $w$ belong to $d_{A}^{\omega\cdot (n-1)+k}(B_{\F(M_n)})$ and so does $\frac12(v+w)$ by convexity of $d_{A}^{\omega\cdot (n-1)+k}(B_{\F(M_n)})$. 
Note that 
$$w-v=\frac1{2^{n-1}}\frac{1}{2^k}\Big(\sum_{j=1}^{2^k}\delta(f(x_n^{i_j})) - \sum_{j=2^k+1}^{2^{k+1}}\delta(f(x_n^{i_j}))\Big).$$
We claim that $\|w-v\|\ge 2A$. 
This follows from the fact that there exist $\varphi \in \Lip_0(M_n)$ and a constant $p \in \Real$ such that $\|\varphi\|_L\leq 1$,  $\varphi(f(x_n^{i_j}))=p$ for $1 \le j \le 2^k$ and $\varphi(f(x_n^{i_j}))=p-2A2^{n-1}$ for $2^k+1 \le j \le 2^{k+1}$. 
Thus $\langle w-v,\varphi\rangle=2A$ and $\|w-v\|\ge 2A$. Finally, we apply Lemma \ref{lem:midpointtrick} to deduce that $\frac12(v+w) \in d_A^{\omega\cdot (n-1)+k+1}(B_{\F(M_n)})$. 
This finishes the inductive proof of \eqref{eq:claim}. 

Therefore we have in particular that for all $k\in \N$:
$$\frac{1}{2^k}\sum_{j=1}^{2^k} u_{j}=\frac{1}{2^{n-1}}\Big(\delta(f(t_n)) - \frac{1}{2^k}\sum_{j=1}^{2^k} \delta(f(x_n^j))\Big) \in  d_A^{\omega\cdot (n-1)+k}(B_{\F(M_n)}).$$
By a similar argument we get:
$$\frac{1}{2^{n-1}}\Big(\frac{1}{2^k}\sum_{j=1}^{2^k} \delta(f(x_n^j)) -\delta(f(b_n))\Big) \in  d_A^{\omega\cdot (n-1)+k}(B_{\F(M_n)}).$$
Note that $f_{t_n,b_n}=\frac{1}{2^n}(\delta(f(t_n))-\delta(f(b_n)))$ is the midpoint of the two above elements of $d_A^{\omega\cdot(n-1)+k}(B_{\F(M_n)})$, which is a convex set. 
It follows that for all $k \in \N$, $f_{t_n,b_n}\in d_A^{\omega\cdot(n-1)+k}(B_{\F(M_n)})$. 
Thus $f_{t_n,b_n}\in d_A^{\omega\cdot n}(B_{\F(M_n)})$, which finishes the proof of the proposition. 
\end{proof}


\begin{proof}[Proof of Proposition~\ref{p:SufficientForDentabilityOmegaThree}] It follows from Lemma~\ref{l:DentabilityLowerBound} that there exists $A\in (0,1]$ so that for all $n\in \N$, there exists a scaling factor $s_n>0$ such that $d_A^{\omega\cdot n}(B_{\F(M,s_nd)})\neq \emptyset$. But $\F(M,s_nd)$ is linearly isometric to $\F(M,d)$, so $d_A^{\omega\cdot n}(B_{\F(M,d)})\neq \emptyset$. Since the derived sets are convex and symmetric, we get that for all $n\in \N$, $0 \in d_A^{\omega\cdot n}(B_{\F(M,d)})$ and therefore $0 \in d_A^{\omega^2}(B_{\F(M,d)})$.  Thus $D(\F(M,d))>\omega^2$. Since the dentability index of a Banach space is always of the form $\omega^\alpha$ (see \cite{Lancien2006} and references therein) we deduce that  $D(\F(M,d))\ge \omega^3$. 
\end{proof}


Exactly as in the previous section we deduce the following.

\begin{theorem}\label{th:D(F(grid))} Whenever $M$ is one of the following uniformly discrete metric spaces:
\begin{enumerate}
    \item the $\ell_1$-sum of the $D_n^\omega$'s,
    \item a net in $c_0$ (for example $\Z_{c_0}$),
    \item a net in $L_1$ or $\Z_{Haar}$,
\end{enumerate}
then the dentability index of $\F(M)$ is equal to $\omega^3$.  
\end{theorem}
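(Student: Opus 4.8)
The plan is to combine the two directions that have already been established for each of the three families. For the upper bound, I would invoke the general result for uniformly discrete metric spaces: each of the spaces listed is uniformly discrete (the $\ell_1$-sum of the $D_n^\omega$'s is uniformly discrete since all $D_n^\omega$ are, say, $2$-separated; any net is $a$-separated by definition; $\Z_{c_0}$ and $\Z_{Haar}$ are integer grids, hence $1$-separated), so Corollary~\ref{c:DentabilityCUD} gives $D(\F(M))\leq \omega^3$ in every case.

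For the lower bound, the key tool is Proposition~\ref{p:SufficientForDentabilityOmegaThree}: it suffices to verify that the family $(D_n^\omega)_{n\in\N}$ embeds with equal distortion into each $M$. For case (1), the $\ell_1$-sum $M=\bigsqcup_{n} D_n^\omega$ obviously contains each $D_n^\omega$ isometrically as $M_n$, so the family embeds with distortion $1$. For case (2), this is exactly what was recorded in the proof of Proposition~\ref{prop: Phi(F(grid))}: Proposition~\ref{prop: grid over c0 universal space} shows $(D_n^\omega)_{n\in\N}$ embeds with equal distortion into $\Z_{c_0}$ (and, more generally, the same reasoning applies to any net in $c_0$, since such a net is a bi-Lipschitz copy of a countable uniformly discrete space into which $\Z_{c_0}$-universality transfers). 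For case (3), we use the fact recalled just before the last proposition of the previous subsection: $(D_n^\omega)_{n\in\N}$ equi-bi-Lipschitz embeds into $L_1$ by \cite{BCDKRSZ}, and a reasoning parallel to the proof of Proposition~\ref{prop: grid over c0 universal space} shows it embeds with equal distortion into any net in $L_1$ and into $\Z_{Haar}$. In each case Proposition~\ref{p:SufficientForDentabilityOmegaThree} then yields $D(\F(M))\geq \omega^3$.

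Putting the two inequalities together gives $D(\F(M))=\omega^3$ in all three cases, which is the statement of the theorem.

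There is essentially no obstacle here: the theorem is an immediate corollary of Corollary~\ref{c:DentabilityCUD} and Proposition~\ref{p:SufficientForDentabilityOmegaThree}, the only thing to check being that the three families genuinely embed with equal distortion into the respective target spaces — and those embedding facts have all been established (or pointed to) earlier in the section, exactly as they were used for the weak-fragmentability index in Proposition~\ref{prop: Phi(F(grid))} and the subsequent propositions. This is why the authors can write ``exactly as in the previous section we deduce the following''; the proof is a one-line appeal to the parallel structure.
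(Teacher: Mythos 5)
Your proposal is correct and follows exactly the route the paper intends when it writes ``Exactly as in the previous section we deduce the following'': the upper bound from Proposition~\ref{c:DentabilityCUD} and the lower bound from Proposition~\ref{p:SufficientForDentabilityOmegaThree} together with the equal-distortion embeddings of $(D_n^\omega)_{n\in\N}$ already established (isometric inclusion for the $\ell_1$-sum, Proposition~\ref{prop: grid over c0 universal space} for $\Z_{c_0}$, and the \cite{BCDKRSZ} embedding plus rounding for nets in $L_1$ and $\Z_{Haar}$). Only your parenthetical justification for general nets in $c_0$ is phrased loosely (one should argue as for nets in $L_1$: embed the diamonds into $c_0$ via Aharoni and round to the net, rather than ``transfer'' the universality of $\Z_{c_0}$), but this is a wording issue, not a gap.
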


\begin {remark}\label{rem:p-1-uProper}
Let us briefly make comments that are analogous to Remark \ref{rem:Phi}. We know from \cite{Basset} that the set of possible values for $D(\F(M))$, for $M$ complete separable purely 1-unrectifiable is uncountable. It is even uncountable if we restrict ourselves to countable discrete and complete metric spaces (see next section). Again computing $D(\F(M))$ is an interesting problem. The first intriguing case is the class of metric spaces $M$ such that $D(\F(M))=\omega^2$. We know that it contains the class of proper p-1-u metric spaces. Indeed, we have already explained in Remark~\ref{rem:Phi} that if $M$ is proper p-1-u, then $\F(M)$ admits a dual weak$^*$ asymptotically uniformly convex norm. 
Then it follows from results in \cite{HajekLancien} that the weak$^*$-dentablity index and therefore the dentability index of $\F(M)$ cannot exceed $\omega^2$.

We do not know whether the conditions $\phi(\F(M))=\omega$ and $D(\F(M))=\omega^2$ are equivalent. 
\end{remark}

\section{Countable complete discrete metric spaces}

The main purpose of this section is to prove the following.

\begin{theorem}\label{thm:mainexample} Let $\xi$ be a countable ordinal. Then there exists a complete discrete metric space $D_\xi$ such $D(\F(D_\xi))> \xi$.
\end{theorem}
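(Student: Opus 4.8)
The plan is to build the spaces $D_\xi$ by transfinite recursion, mimicking the way the diamond graphs $(D_n^\omega)_n$ were used in the previous section to force a dentability index of $\omega^3$, but now ``iterating the diamond construction'' into the transfinite. The base case should be a metric space $D_0$ with $D(\F(D_0)) > 0$, e.g.\ any non-trivial discrete complete space, or more usefully a countable complete discrete space whose free space already has dentability index $\omega^2$ (such as an $\ell_1$-sum of the $D_n^\omega$'s made complete). The recursion step must transform a space $D_\eta$ into a space $D_{\eta+1}$ whose free space has strictly larger index, and the limit step must amalgamate the $D_\eta$, $\eta < \lambda$, into a single countable complete discrete $D_\lambda$ that ``sees'' all of them. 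For the limit step the natural device is the $\ell_1$-sum of pointed metric spaces (defined just before Proposition~\ref{prop: Phi_free_diamonds} in the excerpt): if each $D_\eta$ is countable, complete, discrete with a basepoint, then $M=\ell_1\text{-sum of }\{D_\eta : \eta<\lambda\}$ over a countable cofinal sequence in $\lambda$ is again countable; each $\F(D_\eta)$ embeds isometrically into $\F(M)$ via the canonical identification $\F_M(K)$, so $D(\F(M)) \geq \sup_\eta D(\F(D_\eta)) > \sup_\eta \eta = \lambda$. One must be a little careful that the $\ell_1$-sum of \emph{discrete} spaces is discrete and complete — this holds provided the pieces are uniformly bounded \emph{away from the basepoint only in a controlled way}, so rescaling each $D_\eta$ so that its points lie, say, in an annulus $\{1 \le d(x,0) \le 2\}$ (discreteness of each piece is not lost under the identification, completeness is inherited) will make the sum $1$-separated away from $0$, hence discrete, and complete.

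The heart of the matter is the successor step, and I expect it to be the main obstacle. The mechanism should be a ``one diamond layer'' operation $D_\eta \mapsto D(D_\eta)$: replace each edge of the countably-branching bipartite graph $D_1^\omega$ by a rescaled copy of $D_\eta$, exactly as $D_{n+1}^\omega$ was obtained from $D_1^\omega$ and $D_n^\omega$. Lemma~\ref{l:DentabilityLowerBound} is the prototype to generalize: there the induction on $n$ shows $m_{t_n,b_n} \in d_1^{\omega\cdot n}(B_{\F(D_n^\omega)})$, and the single inductive step gains an additive ``$\omega$'' in the ordinal exponent by first climbing $k$ derivations inside one diamond layer (via the midpoint trick, Lemma~\ref{lem:midpointtrick}, and the separating Lipschitz function $\varphi$) and then passing to $k \to \infty$. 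The generalization I would prove is: if $M$ contains, with uniformly bounded distortion, rescaled copies of $D_\eta$ arranged as the edges of $D_1^\omega$, and if $m_{\text{top},\text{bot}} \in d_A^{\gamma}(B_{\F(\text{copy of }D_\eta)})$ for some ordinal $\gamma$ and constant $A$, then the molecule joining the poles of the new diamond lies in $d_A^{\gamma + \omega}(B_{\F(D(D_\eta)))})$ — same proof, replacing ``$\omega\cdot(n-1)$'' by the generic ``$\gamma$''. Iterating $D(\cdot)$ through a limit using the $\ell_1$-sum then pushes the exponent past any countable $\beta$, and since $D(\F(M))$ is always of the form $\omega^\alpha$ (cited from \cite{Lancien2006}), reaching $d_A^{\beta}(B_{\F(D_\xi)}) \neq \emptyset$ for $\beta = \omega^\xi$ or any ordinal $\geq \xi$ forces $D(\F(D_\xi)) > \xi$.

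Concretely I would proceed as follows. First, fix a constant $A \in (0,1]$ once and for all and prove the ``diamond step lemma'' above by the same double induction as Lemma~\ref{l:DentabilityLowerBound}, isolating the two facts it uses: convexity and symmetry of the derived sets $d_A^\gamma(B_{\F(M)})$ (so midpoints of derived points are derived), and the existence of the separating $1$-Lipschitz function $\varphi$ on a diamond layer witnessing $\|w-v\| \ge 2A$. Second, set up the transfinite recursion on $\eta < \omega_1$: let $D_0$ be a fixed countable complete discrete space with $m_{x,y} \in d_A^{\omega^2}(B_{\F(D_0)})$ for some pair $x,y$ (obtained from Theorem~\ref{th:D(F(grid))}(1) after completing the $\ell_1$-sum); let $D_{\eta+1} = D(D_\eta)$ with the induced shortest-path metric, rescaled into an annulus, noting it is still countable, complete, discrete, and that the pole-molecule now lies in $d_A^{\theta_\eta + \omega}(B_{\F(D_{\eta+1})})$ where $\theta_\eta$ is the ordinal attained at stage $\eta$; and for limit $\lambda$ let $D_\lambda$ be the $\ell_1$-sum (over a countable cofinal $\eta_k \uparrow \lambda$) of rescaled copies of the $D_{\eta_k}$, so that $d_A^{\theta_{\eta_k}}(B_{\F(D_\lambda)}) \neq \emptyset$ for all $k$, hence (by convexity/symmetry, $0 \in d_A^{\theta_{\eta_k}}$ and then) $0 \in d_A^{\sup_k \theta_{\eta_k}}(B_{\F(D_\lambda)})$. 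Third, verify by transfinite induction that $\theta_\eta \ge \eta$ (indeed the exponents grow at least like $\eta$), so $d_A^{\theta_\xi}(B_{\F(D_\xi)}) \neq \emptyset$ gives $D(\F(D_\xi), A) > \theta_\xi \ge \xi$, whence $D(\F(D_\xi)) > \xi$. The delicate points are bookkeeping the rescalings so that all copies sit in a common annulus and the distortions stay bounded by a single constant independent of $\eta$ (needed to keep one fixed $A$ throughout), and checking that completeness and discreteness genuinely survive both the diamond-replacement and the $\ell_1$-sum — this is where I expect to spend the most care.
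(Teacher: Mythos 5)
There is a genuine gap at the limit step, and it is exactly the point where the paper's construction contains its key new idea. Your successor (``diamond layer'') lemma --- like Lemma~\ref{l:DentabilityLowerBound} and the paper's Lemma~\ref{l:iterated} --- takes as hypothesis that a \emph{molecule between two designated poles}, $m_{t,b}$, lies deep in the derived sets $d_A^{\gamma}(B_{\F(D_\eta)})$; this is the invariant that must be carried through the whole transfinite recursion. But your limit step (a plain $\ell_1$-sum of the $D_{\eta_k}$) only yields that the point $0$ lies in $d_A^{\sup_k \theta_{\eta_k}}(B_{\F(D_\lambda)})$: $0$ is not a molecule, and nothing in your argument produces a pair of points of $D_\lambda$ whose molecule sits at that depth (the deep molecules of the different summands are different molecules, and the symmetrization that puts $0$ deep is precisely what loses them). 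Hence the hypothesis of your diamond-step lemma is unavailable at every stage of the form $\lambda+1$, and the recursion cannot be continued past any limit ordinal that is not the final one; your base case has the same defect, since Theorem~\ref{th:D(F(grid))}(1) and Lemma~\ref{l:DentabilityLowerBound} give $0\in d_A^{\omega^2}$ but no single pair $x,y$ with $m_{x,y}\in d_A^{\omega^2}$. Nor can you repair this by gluing the pieces at common top and bottom points instead of at a basepoint: the isolation radii of the poles of $D_{\eta_k}$ shrink as $k\to\infty$, so the glued poles would not be isolated and discreteness would be lost.

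The paper's proof is organized around fixing exactly this. At a limit ordinal $\alpha$ it truncates each piece to $P_{\beta_n}$ by deleting a uniform $2^{-k}$-neighbourhood of its poles (so the amalgam stays discrete and complete), glues the truncations at a common top and bottom, and recovers the pole molecule via the convex decomposition $m_{t_{\beta_n},b_{\beta_n}}=2^{-k}m_{t_{\beta_n},u_{\beta_n}}+(1-2^{1-k})m_{u_{\beta_n},v_{\beta_n}}+2^{-k}m_{v_{\beta_n},b_{\beta_n}}$; Proposition~\ref{prop:conv_comb} then shows $m_{t_\alpha,b_\alpha}$ stays in the derived set of the same order, but only for the smaller parameter $(1-\eps_\alpha)\mu_{\gamma_n}$. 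This also explains why your hope of keeping ``one fixed $A$ independent of $\eta$'' cannot work as stated: the parameter necessarily degrades at every limit stage, and the quantitative bookkeeping of the paper (choosing $\eps_\gamma$ with $\prod_\gamma(1-\eps_\gamma)\ge \tfrac12$, i.e.\ the factors $\mu_\alpha$, which is why the family $(M_\alpha)_{\alpha\le\xi}$ depends on the target $\xi$) exists precisely to control the accumulated loss. Your successor step (gaining $+\omega$ via a diamond layer rather than the paper's more economical $+1$ via a chain of two half-scale copies) and your checks that discreteness and completeness survive gluings and $\ell_1$-sums are fine, but without a limit construction that preserves a pole molecule with controlled loss of the derivation parameter, the transfinite induction does not go through.
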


\subsection{Construction of the spaces}

It will be convenient to set up the following notation.
\begin{definition}
     Let $M$ be a metric space that contains points $t,b$.
    Let $k\in \Natural$. We will call the \emph{chain of $2^k$ copies of $M$} the metric space $N$ obtained as follows. 
    Let $t_i,b_i \in M_i$ for $i=1,\ldots, 2^k$ be disjoint copies of $M$ together with their corresponding distinguished points.
    Then $N=\bigcup_{i=1}^{2^k} M_k$ in which we identify $t_i=b_{i+1}$ for every $i=1,\ldots,2^k-1$. 
    We define $d_N(x,y)=d_{M_i}(x,y)$ if $x,y \in M_i$ and $$d_N(x,y)=d(x,t_i)+(j-i-1)d(t,b)+d(b_j,y)$$ if $x\in M_i$, $y\in M_j$ and $i<j$.
\end{definition}

For any given $\xi \in [1,\omega_1)$ we shall construct a family $(M_\alpha)_{\alpha \le \xi}$ of complete discrete metric spaces such that for all $\alpha \le \xi$, $d_{1/2}^\alpha(B_{\F(M_\alpha)})\neq \emptyset$. Then we will set $D_\xi=M_\xi$. Let us point out that an important feature of our construction is that the family $(M_\alpha)_{\alpha \le \xi}$ depends on $\xi$. Nevertheless, for the sake of readability, our notation does not reflect this dependence. 

\medskip So let us fix $\xi \in (0,\omega_1)$. Let $\Gamma$ be the set of limit ordinals in $(0,\xi]$. 
If $\Gamma \neq \emptyset$, pick $(\eps_\gamma)_{\gamma \in \Gamma}$ in $\{2^{1-k},\ k\in \N\}$ such that $\prod_{\gamma \in \Gamma}(1-\eps_\gamma) \ge \frac12$ and for every $\alpha \le \xi$ denote $$\mu_\alpha=\prod_{\gamma \le \alpha, \gamma \in \Gamma}(1-\eps_\gamma).$$
If $\Gamma=\emptyset$, that is $\xi=n \in \N$, we set $\mu_n=1$.

We will define $(M_\alpha,d_\alpha)_{\alpha \le \xi}$ inductively. For each $\alpha \le \xi$, we will distinguish two special points: $t_\alpha$ (top) and $b_\alpha$  (bottom), so that $1=d_\alpha(t_\alpha,b_\alpha)=\diam(M_\alpha)$.  The origin of the space $M_\alpha$ will always be $b_\alpha$. 
We define $M_0=\{0,1\}$ equipped with the distance induced by $\R$ and let $b_1=0$ and $t_1=1$. 
Let $\alpha \le \xi$ and assume that $(M_\beta,d_\beta)$, together with $t_\beta$ and $b_\beta$ have been constructed for all $\beta<\alpha$.

Assume first that $\alpha=\beta+1\le \xi$ is a successor ordinal. 
Then $M_\alpha$ is defined as the chain of $2$ copies of $(M_\beta,\frac12 d_\beta)$. We call these copies $M_{\beta,1}$ and $M_{\beta,2}$ and we set $b_\alpha=b_{\beta,1}$ and $t_\alpha=t_{\beta,2}$.

Assume now that $\alpha$ is a limit ordinal, that is $\alpha \in \Gamma$. Then there exists $k \in \N$ such that $\eps_\alpha=2^{1-k}$. Let $(\beta_n)_{n=1}^\infty$ be a sequence in $(0,\alpha)$ such that $\sup_n \beta_n=\alpha$. We may as well assume that $\beta_n$ is of the form $\gamma_n+k_n$ for some $\gamma_n<\alpha$ and $k_n \in \N$ such that $k_n>2k$ and $\sup_n k_n=\omega$. 
Then note that $M_{\beta_n}$ is the chain of $2^{k_n}$ copies of $(M_{\gamma_n},2^{-k_n}d_{\gamma_n})$. 
Each one of the copies $M_{\gamma_n,i}$ is equipped with top and bottom points $t_{\gamma_n,i}$ and $b_{\gamma_n,i}$.
Next we consider the following metric subspace of $M_{\beta_n}$: 
$$P_{\beta_n}=\{x \in M_{\beta_n},\ \min\{d_{\beta_n}(x,t_{\beta_n}),d_{\beta_n}(x,b_{\beta_n})\}\ge 2^{-k}\}\cup \{t_{\beta_n},b_{\beta_n}\}.$$
The space $P_{\beta_n}$ is equipped with the metric induced by $d_{\beta_n}$, still denoted $d_{\beta_n}$.  In other words: we uniformly isolate (by $2^{-k}$) the points $t_{\beta_n}$ and $b_{\beta_n}$ in $M_{\beta_n}$. We can also write 
$$P_{\beta_n}=\{t_{\beta_n},b_{\beta_n}\} \cup \bigcup_{i=2^{-k}2^{k_n}+1}^{(1-2^{-k})2^{k_n}}M_{\gamma_n,i}.$$
Finally, $M_\alpha$ is obtained by gluing together the spaces $P_{\beta_n}$ by their top and bottom points. So we set $M_\alpha=\bigcup_{n\in \N} P_{\beta_n}$  with the identifications  $t_\alpha=t_{\beta_n}$ and $b_{\alpha}=b_{\beta_n}$ for all $n\in \N$. For $x,y \in P_{\beta_n}$, we set $d_\alpha(x,y)=d_{\beta_n}(x,y)$ and for $x \in P_{\beta_n}$ and $y \in P_{\beta_m}$ with $n\neq m$, we set 
$$d_\alpha(x,y)=\min\big\{d_{\beta_n}(x,b_{\beta_n})+d_{\beta_m}(b_{\beta_m},y), d_{\beta_n}(x,t_{\beta_n})+d_{\beta_m}(t_{\beta_m},y)\big\}.$$

\subsection{\texorpdfstring{Properties of the family $(M_\alpha)_{\alpha \le \xi}$}{Properties of the family}}\ \\
Throughout this subsection $\xi \in (0,\omega_1)$ is fixed. 

\begin{prop}
For every $\alpha \in (0,\xi]$, $M_\alpha$ is countable discrete and complete.   
\end{prop}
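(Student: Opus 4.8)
The plan is to prove the three properties (countable, discrete, complete) simultaneously by transfinite induction on $\alpha \le \xi$, exploiting the fact that each $M_\alpha$ is built from strictly earlier spaces $M_\beta$ by operations (rescaling, taking the chain of $2^k$ copies, passing to the subspace $P_{\beta_n}$, and the countable glueing at $t_\alpha,b_\alpha$) that manifestly preserve countability, and that each preserves discreteness and completeness \emph{provided one keeps track of a lower bound on the distance between the glueing points and the rest of the space}. Concretely, the induction hypothesis I would carry is: $M_\beta$ is countable, complete, and there is a constant $c_\beta>0$ such that every point of $M_\beta$ other than $t_\beta$ (resp. $b_\beta$) is at distance $\ge c_\beta$ from $t_\beta$ (resp. $b_\beta$), and moreover $M_\beta$ is uniformly discrete away from... — actually discreteness need not be uniform, so the cleaner hypothesis is just that $M_\beta$ is discrete (every point isolated) together with the separation-of-endpoints bound, which is exactly what the construction of $P_{\beta_n}$ is designed to guarantee (it isolates $t_{\beta_n},b_{\beta_n}$ by $2^{-k}$).

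First I would handle the base case $M_0=\{0,1\}$, which is trivially finite, discrete, complete, with endpoints separated by $1$. For the successor step $\alpha=\beta+1$: $M_\alpha$ is the chain of two copies of $(M_\beta,\tfrac12 d_\beta)$ glued at one pair of endpoints; countability and completeness are immediate (a finite union of complete spaces glued at a point is complete, since any Cauchy sequence is eventually inside one copy once it has small enough diameter, using that the glueing point is isolated in each copy by the scaled version of $c_\beta$), and discreteness is clear since each copy is discrete and the single identified point sees the rest of each copy at distance $\ge \tfrac12 c_\beta$. The new endpoints $b_\alpha=b_{\beta,1}$, $t_\alpha=t_{\beta,2}$ inherit separation $\ge \tfrac12 c_\beta$ within their own copy and are at distance $\ge \tfrac12$ from the other copy, so I can set $c_\alpha=\tfrac12 c_\beta$ (or $\min$ with $\tfrac12$).

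The main obstacle, and the only step requiring real care, is the limit step $\alpha\in\Gamma$, where $M_\alpha=\bigcup_{n\in\N} P_{\beta_n}$ is an \emph{infinite} glueing along the common endpoints $t_\alpha,b_\alpha$. Countability is still immediate. For discreteness: every point $x\ne t_\alpha,b_\alpha$ lies in a unique $P_{\beta_n}$, is isolated there by the induction hypothesis, and cannot be approached from another $P_{\beta_m}$ because the metric formula forces any such point to be at distance $\ge 2^{-k}$ from $x$ (going through $t_\alpha$ or $b_\alpha$, each of which is $2^{-k}$-isolated in $P_{\beta_n}$); the points $t_\alpha,b_\alpha$ themselves are isolated because in every $P_{\beta_n}$ they are $2^{-k}$-isolated, and $2^{-k}$ is a uniform bound independent of $n$. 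For completeness this uniform bound is again the crucial point: given a Cauchy sequence $(x_j)$ in $M_\alpha$, once $\mathrm{diam}\{x_j:j\ge J\}<2^{-k}$ the tail either stays within a single $P_{\beta_n}$ (and converges there by the induction hypothesis plus $P_{\beta_n}$ being a closed, hence complete, subspace of the complete space $M_{\beta_n}$), or it is eventually equal to $t_\alpha$ or to $b_\alpha$; either way it converges in $M_\alpha$. I would remark that $P_{\beta_n}$ is indeed complete as a closed subset of $M_{\beta_n}$, which is complete by induction. Finally $t_\alpha,b_\alpha$ are $2^{-k}$-separated from everything, so I can set $c_\alpha=2^{-k}=\tfrac12\eps_\alpha$, closing the induction. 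Taking $\alpha=\xi$ yields the proposition for every $M_\alpha$, $\alpha\le\xi$.
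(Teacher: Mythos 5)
Your proof is correct and follows essentially the same route as the paper's: transfinite induction with the same case analysis, the pigeonhole/closedness argument for completeness, and the built-in $2^{-k}$ isolation of $t_\alpha,b_\alpha$ at limit stages doing the real work. The extra bookkeeping of an endpoint-separation constant $c_\beta$ is harmless but not needed: at limit stages the $2^{-k}$ separation comes directly from the definition of $P_{\beta_n}$, and at successor stages completeness follows already by extracting a subsequence lying in one of the two copies, as in the paper.
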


\begin{proof} Each of these properties will be proved by transfinite induction. The countability is obvious, so let us prove that for  $\alpha \in [0,\xi]$, $M_\alpha$ is discrete. 
Clearly $M_0$ is discrete. 
Let $\alpha \in  (0,\xi]$ and assume that for all $\beta<\alpha$, the space $M_\beta$ is discrete. Assume first that $\alpha=\beta+1$. 
Since $M_\beta$ is discrete, it is  clear that the gluing of $M_{\beta,1}$ and $ M_{\beta,2}$ as defined in the construction is also discrete. Assume now that $\alpha$ is a limit ordinal and let us use the notation introduced in the construction of $M_\alpha$. Let $x \in M_\alpha$. If $x$ is either $t_\alpha$ or $b_\alpha$, then it is $2^{-k}$ separated from the rest of $M_\alpha$ and therefore isolated. Otherwise, there exists $n \in \N$ such that $x\in P_{\beta_n} \setminus \{b_{\beta_n},t_{\beta_n}\}$. Since $M_{\beta_n}$ is discrete, $x$ is isolated in $M_{\beta_n}$ and therefore in $P_{\beta_n} \setminus \{b_{\beta_n},t_{\beta_n}\}$. It is also $2^{-k}$ separated from the other points of $M_\alpha$ and therefore isolated in $M_\alpha$.

Let us now justify the completeness of $M_\alpha$. Obviously $M_0$ is complete. 
Let $\alpha \in  (0,\xi]$ and assume that for all $\beta<\alpha$, the space $M_\beta$ is complete. Assume first that $\alpha=\beta+1$ and let $(x_l)_l \subset M_\alpha$ be a Cauchy sequence for $d_\alpha$. Then, there exists $i\in \{1,2\}$ and a subsequence $(y_l)_l$ of $(x_l)_l$ included in $M_{\beta,i}$. Then $(y_l)_l$ is Cauchy in $M_{\beta,i}$ for $d_{\beta,i}=\frac12d_\beta$ and, since $(M_\beta,d_\beta)$ is complete, $(y_l)_l$ is converging in $(M_{\beta,i},d_{\beta,i})$. Since $(x_l)_l$ is Cauchy and admits a converging subsequence in $(M_\alpha,d_\alpha)$, it is converging in $(M_\alpha,d_\alpha)$. Assume now that $\alpha$ is a limit ordinal and let us use again the notation introduced in the construction of $M_\alpha$. Let $(x_l)_l \subset M_\alpha$ be a Cauchy sequence for $d_\alpha$. There exists $l_0\in \N$ such that for all $l,m \ge l_0$,  $d_\alpha(x_l,x_m)<2^{-(k+1)}$. Thus either $x_l=t_\alpha$ for all $l\ge l_0$, or $x_l=b_\alpha$ for all $l\ge l_0$, or there exists $n\in \N$ such that for all $l\ge l_0$, $x_l \in \{x \in M_{\beta_n},\ \min\{d_{\beta_n}(x,t_{\beta_n}),d_{\beta_n}(x,b_{\beta_n})\}\ge 2^{-k}\}$. Since, $\{x \in M_{\beta_n},\ \min\{d_{\beta_n}(x,t_{\beta_n}),d_{\beta_n}(x,_{\beta_n})\}\ge 2^{-k}\}$ is a closed subset of the complete metric space $(M_{\beta_n},d_{\beta_n})$, it follows that $(x_l)_l$ is converging in $(M_\alpha,d_\alpha)$. 
    
\end{proof} 

\begin{prop}\label{p:KeyDentabilityLowerBound} 
For every $\alpha \leq \xi$ we have  $m_{t_\alpha,b_\alpha} \in d_{\mu_\alpha}^\alpha(B_{X_\alpha})$, where $X_\alpha=\F(M_\alpha)$.  
\end{prop}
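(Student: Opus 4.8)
The plan is to proceed by transfinite induction on $\alpha$, following the inductive structure of the construction of $M_\alpha$. The base case $\alpha=0$ is immediate: $M_0=\{0,1\}$, $m_{t_0,b_0}$ is (up to sign) the only molecule, and $d_{\mu_0}^0(B_{X_0})=B_{X_0}$, so there is nothing to prove (note $\mu_0=1$).

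For the successor step $\alpha=\beta+1$, recall that $M_\alpha$ is the chain of two copies $M_{\beta,1},M_{\beta,2}$ of $(M_\beta,\tfrac12 d_\beta)$, with $b_\alpha=b_{\beta,1}$, $t_\alpha=t_{\beta,2}$ and the midpoint $p:=t_{\beta,1}=b_{\beta,2}$. Using the canonical isometric identification $\F_{M_\alpha}(M_{\beta,i})\cong\F(M_{\beta,i})\cong\F(M_\beta)$ (via McShane--Whitney, as recalled in the preliminaries) together with the scaling invariance of free spaces, the induction hypothesis applied inside each copy gives that the molecule of the pair (top of copy $i$, bottom of copy $i$) lies in $d_{\mu_\beta}^\beta(B_{\F_{M_\alpha}(M_{\beta,i})})\subset d_{\mu_\beta}^\beta(B_{X_\alpha})$. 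Concretely, writing $u=\dfrac{\delta(t_\alpha)-\delta(p)}{1/2}$ and $v=\dfrac{\delta(p)-\delta(b_\alpha)}{1/2}$ (the normalizing factor is $d_\alpha(t_\alpha,p)=\tfrac12$), both $u$ and $v$ belong to $d_{\mu_\beta}^\beta(B_{X_\alpha})$ — here one must check the norm-one bound, which holds because the "distinguished point to distinguished point" molecule always has norm $1$ and we rescaled correctly. Since $\mu_\alpha=\mu_\beta$ (a successor ordinal is not in $\Gamma$), the derived set $d_{\mu_\alpha}^\beta(B_{X_\alpha})$ is convex and contains $\tfrac12(u+v)$. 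But $\|u-v\|$ is computed via the $1$-Lipschitz function that is $0$ on $M_{\beta,2}$, equals $d_\alpha(\cdot,p)$ on $M_{\beta,1}$ — more precisely one exhibits $\varphi\in\Lip_0(M_\alpha)$ with $\|\varphi\|_L\le1$ separating $p$ from $\{t_\alpha,b_\alpha\}$ by $\tfrac12$ on each side, giving $\langle u-v,\varphi\rangle = 2$, so $\|u-v\|\ge 2 = 2\mu_\alpha$ (recall $\mu_\alpha\le 1$). Applying Lemma~\ref{lem:midpointtrick} with $\eps=\mu_\alpha$ yields $\tfrac12(u+v)=m_{t_\alpha,b_\alpha}\in d_{\mu_\alpha}(d_{\mu_\alpha}^\beta(B_{X_\alpha}))=d_{\mu_\alpha}^{\alpha}(B_{X_\alpha})$, as desired.

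For the limit step $\alpha\in\Gamma$, write $\eps_\alpha=2^{1-k}$, so $\mu_\alpha=(1-\eps_\alpha)\mu_{\beta}$-type products; the key point is $\mu_{\beta_n}\ge\mu_\alpha/(1-\eps_\alpha)$ is controlled uniformly, and in fact for the $\beta_n=\gamma_n+k_n$ chosen in the construction one has $\mu_{\beta_n}\ge\mu_\alpha$ since all limit ordinals $\le\beta_n$ are $<\alpha$ and hence $\le\gamma_n<\alpha$, so $\mu_{\beta_n}=\mu_{\gamma_n}\ge\mu_\alpha$. By the induction hypothesis applied inside each glued piece $P_{\beta_n}\subset M_\alpha$, the molecule $m_{t_{\beta_n},b_{\beta_n}}=m_{t_\alpha,b_\alpha}$ — note all the $P_{\beta_n}$ share the same top and bottom — lies in $d_{\mu_{\beta_n}}^{\beta_n}(B_{\F_{M_\alpha}(M_{\beta_n})})$, but we only have access to $P_{\beta_n}$, not all of $M_{\beta_n}$: this is the crux. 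Here the role of the isolation radius $2^{-k}$ enters. One shows that $m_{t_\alpha,b_\alpha}$ (which equals $m_{t_{\beta_n},b_{\beta_n}}$) survives in $d_{\mu_{\beta_n}\cdot(1-\eps_\alpha)}^{\beta_n}(B_{\F_{M_\alpha}(P_{\beta_n})})$ — the loss $(1-\eps_\alpha)$ being precisely the price of passing from $M_{\beta_n}$ to its "amputated" subspace $P_{\beta_n}$, controlled by the $2^{-k}$-separation of $t_{\beta_n},b_{\beta_n}$ and an argument in the spirit of Lemma~\ref{l:DentabilityLowerBound} and Proposition~\ref{prop:conv_comb}. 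Granting this, since $\mu_{\beta_n}(1-\eps_\alpha)\ge\mu_\alpha$ and $\sup_n\beta_n=\alpha$ with $\F_{M_\alpha}(P_{\beta_n})$ isometrically a subspace of $X_\alpha$, we get $m_{t_\alpha,b_\alpha}\in d_{\mu_\alpha}^{\beta_n}(B_{X_\alpha})$ for every $n$, hence $m_{t_\alpha,b_\alpha}\in\bigcap_n d_{\mu_\alpha}^{\beta_n}(B_{X_\alpha})=d_{\mu_\alpha}^{\sup_n\beta_n}(B_{X_\alpha})=d_{\mu_\alpha}^{\alpha}(B_{X_\alpha})$.

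**Main obstacle.** I expect the delicate point to be the limit step, specifically the claim that amputating $M_{\beta_n}$ down to $P_{\beta_n}$ costs only the factor $(1-\eps_\alpha)$ in the $\eps$-parameter of the derived set, rather than destroying the membership of the molecule entirely. This needs a careful argument: restricting a metric space to a subspace does not in general commute with the slice-derivation of the free space, and one must use that $\F_{M_\alpha}(P_{\beta_n})$ is a $1$-complemented (via McShane--Whitney extension) isometric subspace of $\F(M_{\beta_n})$ together with the quantitative $2^{-k}$-separation to transfer the derived-set membership with only a controlled loss. The successor step is comparatively routine, being a direct midpoint argument via Lemma~\ref{lem:midpointtrick}, essentially a special case of the computation already carried out in Lemma~\ref{l:DentabilityLowerBound}.
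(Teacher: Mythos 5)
Your base case and successor step are fine and coincide with the paper's argument (the successor case is exactly the midpoint trick, i.e.\ Lemma~\ref{l:iterated} with $l=1$). The problem is the limit step: the one claim that carries the whole proof --- that passing from $M_{\beta_n}$ to the amputated space $P_{\beta_n}$ only costs a factor $(1-\eps_\alpha)$ in the $\eps$-parameter at the \emph{same} ordinal level $\beta_n$ --- is explicitly ``granted'' rather than proved, and you yourself flag it as the main obstacle. Moreover, the mechanism you sketch for it is doubtful: the induction hypothesis at $\beta_n$ gives membership of $m_{t_{\beta_n},b_{\beta_n}}$ in $d^{\beta_n}_{\mu_{\beta_n}}(B_{\F(M_{\beta_n})})$, where $\F(M_{\beta_n})$ is the free space of a metric space that does \emph{not} sit inside $M_\alpha$; derived-set membership does not transfer from a space to a (even $1$-complemented) subspace, since $B_{\F(P_{\beta_n})}$ is a smaller set and the witnesses of non-dentability used at level $\beta_n$ live partly in the removed collars near $t_{\beta_n}$ and $b_{\beta_n}$. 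So as written the limit step has a genuine gap, and the route proposed to close it is not the one that works.

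What the paper does instead avoids any such transfer: it never invokes the induction hypothesis at $\beta_n$, but at $\gamma_n$, where $\beta_n=\gamma_n+k_n$. The point is that $P_{\beta_n}\setminus\{t_{\beta_n},b_{\beta_n}\}$ is still a full chain of rescaled copies of $M_{\gamma_n}$, isometrically embedded in $M_\alpha$, with endpoints $u_{\beta_n},v_{\beta_n}$ at distance $2^{-k}$ from $t_{\beta_n},b_{\beta_n}$. Applying Lemma~\ref{l:iterated} inside $M_\alpha$ to this chain gives $m_{u_{\beta_n},v_{\beta_n}}\in d^{\gamma_n+k_n-k}_{\mu_{\gamma_n}}(B_{X_\alpha})$; one then writes $m_{t_\alpha,b_\alpha}=2^{-k}m_{t_{\beta_n},u_{\beta_n}}+(1-2^{1-k})m_{u_{\beta_n},v_{\beta_n}}+2^{-k}m_{v_{\beta_n},b_{\beta_n}}$, i.e.\ a convex combination with weight $1-\eps_\alpha$ on the deep molecule and weight $\eps_\alpha$ on an element of $B_{X_\alpha}$, and Proposition~\ref{prop:conv_comb} converts this into $m_{t_\alpha,b_\alpha}\in d^{\gamma_n+k_n-k}_{(1-\eps_\alpha)\mu_{\gamma_n}}(B_{X_\alpha})\subseteq d^{\gamma_n+k_n-k}_{\mu_\alpha}(B_{X_\alpha})$. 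So the multiplicative loss $(1-\eps_\alpha)$ is not the price of an ``amputation lemma'' but simply the weight in this three-term convex decomposition, and the finitely many derivation steps sacrificed (reaching $\gamma_n+k_n-k$ instead of $\beta_n$) are harmless because $\sup_n(\gamma_n+k_n-k)=\alpha$, so intersecting over $n$ still yields $m_{t_\alpha,b_\alpha}\in d^{\alpha}_{\mu_\alpha}(B_{X_\alpha})$. To repair your proof you should replace your ``granting this'' step by this combination of Lemma~\ref{l:iterated} at level $\gamma_n$ and Proposition~\ref{prop:conv_comb}.
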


In the proof we will use repeatedly the following lemma.
\begin{lem}\label{l:iterated}
    Let $M$ be a metric space that contains points $t,b$ with the property that $m_{t,b}\in d_\varepsilon^\alpha(B_{\Free(M)})$ for some ordinal $\alpha$ and some $\varepsilon \in (0,1]$.
    Let $l\in \Natural \cup \set{0}$, $s>0$ and let $N=N(l)$ be the chain of $2^l$ copies of $(M,s\cdot d)$. 
    Denote $t_N=t_{2^l}$ and $b_N=b_1$.
    Let $W$ be a metric space and $f:N \to W$ an isometric embedding.
    Then $m_{f(t_N),f(b_N)}\in d_\varepsilon^{\alpha+l}(B_{\F(W)})$.
\end{lem}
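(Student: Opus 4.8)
The plan is to argue by induction on $l\in\N\cup\{0\}$, the inductive step reducing the chain of $2^{l+1}$ copies to two concatenated chains of $2^{l}$ copies and then invoking the midpoint trick of Lemma~\ref{lem:midpointtrick}. For the base case $l=0$ one has $N=N(0)=(M,s\cdot d)$ with $t_N=t$ and $b_N=b$. The canonical rescaling produces a surjective linear isometry $\F(M,s\cdot d)\to\F(M,d)$ sending $m_{t,b}$ to $m_{t,b}$ and $B_{\F(M,s\cdot d)}$ onto $B_{\F(M,d)}$, and since a surjective linear isometry carries each derived set $d_\varepsilon^{\alpha}(B)$ onto the corresponding one, the hypothesis gives $m_{t,b}\in d_\varepsilon^{\alpha}(B_{\F(N)})$. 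As $f$ is an isometric embedding it induces a linear isometric embedding $\widehat f\colon\F(N)\hookrightarrow\F(W)$ with $\widehat f(m_{t,b})=m_{f(t),f(b)}$ (that $\widehat f$ is isometric uses the McShane--Whitney extension theorem). A routine transfinite induction---preimages of open slices under $\widehat f$ are open slices and $\widehat f$ preserves diameters on its range, so $\widehat f$ commutes with the slice derivation in the obvious inclusion sense---yields $\widehat f(d_\varepsilon^{\alpha}(B_{\F(N)}))\subset d_\varepsilon^{\alpha}(B_{\F(W)})$, hence $m_{f(t_N),f(b_N)}\in d_\varepsilon^{\alpha}(B_{\F(W)})$.

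For the inductive step, assume the statement at level $l$ and let $N=N(l+1)$. Splitting the chain of $2^{l+1}$ copies of $(M,s\cdot d)$ into its first $2^{l}$ copies and its last $2^{l}$ copies yields subsets $N',N''\subset N$, each with the induced metric isometric to $N(l)$, with $N'\cap N''=\{c\}$ where $c$ is simultaneously the top of $N'$ and the bottom of $N''$, while $b_N=b_{N'}$ and $t_N=t_{N''}$. From the definition of the chain metric one reads off $d_N(t_N,b_N)=d_N(t_N,c)+d_N(c,b_N)$ and $d_N(t_N,c)=d_N(c,b_N)=\tfrac12 d_N(t_N,b_N)=:\rho$. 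Applying the induction hypothesis to $f|_{N'}$ and $f|_{N''}$, viewed as isometric embeddings of $N(l)$ into $W$, gives $m_{f(c),f(b_N)}\in d_\varepsilon^{\alpha+l}(B_{\F(W)})$ and $m_{f(t_N),f(c)}\in d_\varepsilon^{\alpha+l}(B_{\F(W)})$. Since $f$ is isometric, $d_W(f(t_N),f(b_N))=2\rho=2d_W(f(t_N),f(c))=2d_W(f(c),f(b_N))$, so
\[
m_{f(t_N),f(b_N)}=\tfrac12\bigl(m_{f(t_N),f(c)}+m_{f(c),f(b_N)}\bigr),
\]
the midpoint of the two molecules above. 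To conclude via Lemma~\ref{lem:midpointtrick} it suffices to check $\norm{m_{f(t_N),f(c)}-m_{f(c),f(b_N)}}\ge 2$ (recall $\varepsilon\le 1$). For this let $\psi\colon N\to\R$, $\psi(x)=\min\{d_N(x,t_N),d_N(x,b_N)\}$; it is $1$-Lipschitz with $\psi(t_N)=\psi(b_N)=0$ and $\psi(c)=\rho$. Extend $\psi\circ f^{-1}$ from $f(N)$ to a $1$-Lipschitz function $\varphi$ on $W$ (we may take $\varphi(0_W)=0$ by subtracting a constant, which changes nothing below since the relevant measures have zero total mass). Then
\[
\bigl\langle m_{f(t_N),f(c)}-m_{f(c),f(b_N)},\,\varphi\bigr\rangle=\tfrac1\rho\bigl(\varphi(f(t_N))+\varphi(f(b_N))-2\varphi(f(c))\bigr)=-2,
\]
so the norm is at least $2$. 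Lemma~\ref{lem:midpointtrick} applied to the closed convex set $d_\varepsilon^{\alpha+l}(B_{\F(W)})$ then gives $m_{f(t_N),f(b_N)}\in d_\varepsilon\bigl(d_\varepsilon^{\alpha+l}(B_{\F(W)})\bigr)=d_\varepsilon^{\alpha+l+1}(B_{\F(W)})$, completing the induction.

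The genuinely delicate points are purely bookkeeping: extracting from the definition of ``chain of $2^{k}$ copies'' that concatenating two chains of $2^{l}$ copies reproduces the chain of $2^{l+1}$ copies with the asserted top, bottom and junction points, and verifying that $c$ is an exact metric midpoint of $t_N$ and $b_N$ so that the radius $\rho$ cancels in both the midpoint identity and the norm estimate. Everything else---the rescaling isometry, the stability of the derived sets under linear isometric embeddings, and the peak function $\psi$---is standard, and the hypothesis $\varepsilon\le 1$ is used exactly where Lemma~\ref{lem:midpointtrick} demands a separation of at least $2\varepsilon$.
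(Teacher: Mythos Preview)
Your proof is correct and follows essentially the same route as the paper's: induction on $l$, handling the base case via the rescaling isometry and the fact that isometric embeddings commute with the slice derivation, and in the inductive step splitting $N(l+1)$ into two copies of $N(l)$ and applying Lemma~\ref{lem:midpointtrick} to the midpoint of the two resulting molecules. The only cosmetic differences are that the paper disposes of $s$ once and for all at the start (whereas you carry the rescaling into the base case), and that the paper simply asserts $\norm{m_{f(t^P_1),f(b^P_1)}-m_{f(t^P_2),f(b^P_2)}}=2$ while you exhibit the witnessing Lipschitz function $\psi$ explicitly; your extra detail there is welcome.
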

\begin{proof}
    Since the spaces $\Free(M,s\cdot d)$ are isometrically isomorphic for different choices of $s$, we might as well assume that $s=1$.

    The proof goes by an easy inductive argument in $l$ using Lemma~\ref{lem:midpointtrick}.
    For $l=0$ there is almost nothing to prove.
    Observe that both the assumption and the conclusion are independent of the actual base point of $M$, resp. $W$. We may therefore assume that $f(0)=0$. Then $\Free(N)$ is linearly isometric to $\Free(f(N))$ which is canonically a closed linear subspace of $\Free(W)$. The conclusion follows by standard facts (in particular the fact that $d_\varepsilon(B_X) \cap Y=d_\varepsilon(B_Y)$ when $Y\subseteq X$).
    Assume now that we have proved the claim for some $l$. 
    Notice that $N=N(l+1)$ is a chain of 2 copies of $N(l)$, say $P_1, P_2$.
    We denote $t^P_i,b^P_i$ the corresponding top and bottom points.
    Let $f:N \to W$ be an isometric embedding.
    Then for $i=1,2$ we have $m_{f(t^P_i),f(b^P_i)} \in d_\varepsilon^{\alpha+l}(B_{\Free(W)})$ by the inductive hypothesis.
    Since \[m_{f(t^P_2),f(b^P_1)}=\frac12\left(m_{f(t^P_2),f(b^P_2)}+m_{f(t^P_1),f(b^P_1)}\right),\]
    Lemma~\ref{lem:midpointtrick} implies that every slice containing $m_{f(t^P_2),f(b^P_1)}$ will contain one of these molecules.
    Since $\big\|m_{f(t^P_1),f(b^P_1)}-m_{f(t^P_2),f(b^P_2)}\big\|=2$, this implies that \[m_{f(t_N),f(b_N)}=m_{f(t^P_2),f(b^P_1)} \in d_1(d_\varepsilon^{\alpha+l}(B_{\Free(W)}))\subset d_\varepsilon^{\alpha+l+1}(B_{\Free{(W)}}).\]

\end{proof}

\begin{proof}[Proof of Proposition~\ref{p:KeyDentabilityLowerBound}] 
This will again be a transfinite induction on $\alpha$.
The claim is trivially true for $M_0$.
Let $\alpha \in  (0,\xi]$ and assume that the statement has been proved for all $\beta<\alpha$. Assume first that $\alpha=\beta+1$. 
Note that $\mu_\alpha=\mu_\beta$. 
Applying Lemma~\ref{l:iterated} for $M=M_\beta$, $\varepsilon=\mu_\alpha \in (0,1]$, $l=1$, $s=\frac12$, $W=M_\alpha$ and $f=Id_{M_\alpha}$ we get
that $m_{t_\alpha,b_\alpha} \in d_{\mu_\alpha}^{\beta+1}(B_{X_\alpha})$.
This finishes the successor case.\\
Assume now that $\alpha$ is a limit ordinal and let us use one more time the notation from the construction of $M_\alpha$. 
Let us introduce two new particular points in $P_{\beta_n}$. 
We denote $u_{\beta_n}=t_{\gamma_n,(1-2^{-k})2^{k_n}}$, which is the closest point to $t_{\beta_n}$ in $P_{\beta_n}\setminus \{t_{\beta_n},b_{\beta_n}\}$ and $v_{\beta_n}=b_{\gamma_n,2^{-k}2^{k_n}+1}$, which is the closest point to $b_{\beta_n}$ in $P_{\beta_n}\setminus \{t_{\beta_n},b_{\beta_n}\}$. Note that $d_\alpha(t_{\beta_n},u_{\beta_n})=d_\alpha(b_{\beta_n},v_{\beta_n})=2^{-k}$ and $d_\alpha(u_{\beta_n},v_{\beta_n})=1-2^{1-k}$. 
Also note that $P_{\beta_n}\setminus \set{t_{\beta_n},b_{\beta_n}}$ is a chain of $2^{k_n-k}$ copies of $(M_{\gamma_n},s\cdot \gamma_n)$ for $s=2^{-k_n}$.
By induction hypothesis, $m_{t_{\gamma_n},b_{\gamma_n}} \in d_{\mu_{\gamma_n}}^{\gamma_n}(B_{X_{\gamma_n}})$.

A direct application of Lemma~\ref{l:iterated} reveals that
$m_{u_{\beta_n},v_{\beta_n}} \in d_{\mu_{\gamma_n}}^{\gamma_n+k_n-k}(B_{X_\alpha})$.
Recall now that $d_\alpha(t_{\beta_n},u_{\beta_n})=d_\alpha(b_{\beta_n},v_{\beta_n})=2^{-k}$ and $d_\alpha(u_{\beta_n},v_{\beta_n})=1-2^{1-k}$. 
It follows that 
\[m_{t_{\beta_n},b_{\beta_n}}=2^{-k}m_{t_{\beta_n},u_{\beta_n}}+(1-2^{1-k})m_{u_{\beta_n},v_{\beta_n}}+2^{-k}m_{v_{\beta_n},b_{\beta_n}}.\] 
Thus, recalling that $\eps_\alpha=2^{1-k}$ in the notation from the construction of $M_\alpha$,  we get:
$$m_{t_{\beta_n},b_{\beta_n}}\in (1-2^{1-k})d_{\mu_{\gamma_n}}^{\gamma_n+k_n-k}(B_{X_\alpha})+ 2^{1-k}B_{X_\alpha}=(1-\eps_\alpha)d_{\mu_{\gamma_n}}^{\gamma_n+k_n-k}(B_{X_\alpha})+ \eps_\alpha B_{X_\alpha}.$$
We can now apply Proposition \ref{prop:conv_comb} and the fact that $(1-\eps_\alpha)\mu_{\gamma_n} \geq  \mu_\alpha$ to deduce that for all $n\in \N$, 
$$m_{t_{\alpha},b_{\alpha}}\in d_{(1-\eps_\alpha)\mu_{\gamma_n}}^{\gamma_n+k_n-k}(B_{X_\alpha}) \subset d_{\mu_\alpha}^{\gamma_n+k_n-k}(B_{X_\alpha}).$$
Since $k_n$ tends to $\omega$, we have that 
$$\bigcap_{n=1}^\infty d_{\mu_\alpha}^{\gamma_n+k_n-k}(B_{X_\alpha})= d_{\mu_\alpha}^{\alpha}(B_{X_\alpha}).$$
Thus $m_{t_{\alpha},b_{\alpha}}\in d_{\mu_\alpha}^{\alpha}(B_{X_\alpha})$. 

\end{proof}

\begin{proof}[Proof of Theorem~\ref{thm:mainexample}]
    Since all $\mu_\alpha\geq \frac12$, Proposition~\ref{p:KeyDentabilityLowerBound} implies that $d_{\frac12}^\xi(B_{\Free(M_\xi)})\neq \emptyset$. Thus $D(\Free(M_\xi))>\xi$.
\end{proof}

\subsection{Applications} Since any family of countable ordinals that is not bounded by a countable ordinal must be uncountable, we deduce immediately the following.

\begin{corollary} There exists an uncountable family of complete, discrete and countable metric spaces whose free spaces are mutually non isomorphic.
\end{corollary}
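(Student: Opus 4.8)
The plan is to extract an uncountable antichain from the family $(D_\xi)_{\xi < \omega_1}$ constructed in Theorem~\ref{thm:mainexample}, using the fact that the dentability index is an isomorphic invariant. First I would recall that for every countable ordinal $\xi$ we have produced a complete, discrete, countable metric space $D_\xi = M_\xi$ with $D(\Free(D_\xi)) > \xi$. Since the dentability index of any Banach space is either $\infty$ or of the form $\omega^\alpha$ (cited above from \cite{Lancien2006}), and since $\Free(D_\xi)$ is separable, Proposition~\ref{prop:RNP_D(X)} together with purely-1-unrectifiability of discrete spaces guarantees $D(\Free(D_\xi)) < \omega_1$; in particular each value $D(\Free(D_\xi))$ is a well-defined countable ordinal strictly above $\xi$.

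The key combinatorial observation is that $\set{D(\Free(D_\xi)) : \xi < \omega_1}$ cannot be bounded by any single countable ordinal: if it were bounded by some $\eta < \omega_1$, then in particular $D(\Free(D_\eta)) \leq \eta$, contradicting $D(\Free(D_\eta)) > \eta$. Hence this set of ordinals is cofinal in $\omega_1$, so it is uncountable. Now I would pass to a subfamily: recursively choose $\xi_0 < \xi_1 < \cdots < \xi_\nu < \cdots$ ($\nu < \omega_1$) so that $D(\Free(D_{\xi_{\nu+1}})) > D(\Free(D_{\xi_\nu}))$ for every $\nu$ — this is possible precisely because the values are unbounded in $\omega_1$, so at each stage there remains an index whose dentability index exceeds all those chosen so far. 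Then $\set{\Free(D_{\xi_\nu}) : \nu < \omega_1}$ is an uncountable family of separable Banach spaces with pairwise distinct dentability indices.

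Finally I would invoke the isomorphic invariance of $D$: if $\Free(D_{\xi_\mu})$ were isomorphic to $\Free(D_{\xi_\nu})$ with $\mu \neq \nu$, then $D(\Free(D_{\xi_\mu})) = D(\Free(D_{\xi_\nu}))$, contradicting the construction. Therefore the spaces $\Free(D_{\xi_\nu})$, $\nu < \omega_1$, are mutually non-isomorphic, and $\set{D_{\xi_\nu} : \nu < \omega_1}$ is the desired uncountable family of complete, discrete, countable metric spaces.

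I do not expect a serious obstacle here: the statement is essentially a soft counting argument layered on top of Theorem~\ref{thm:mainexample} and the standard structure theory of the dentability index. The only point requiring a modicum of care is the recursive selection of a strictly increasing sequence of dentability indices of length $\omega_1$, which is where the unboundedness of $\set{D(\Free(D_\xi)):\xi<\omega_1}$ in $\omega_1$ is used in an essential way; everything else is bookkeeping. (Strictly speaking one does not even need the full strength of a length-$\omega_1$ increasing sequence: it suffices to note that a function from $\omega_1$ to $\omega_1$ whose range meets every tail of $\omega_1$ must take uncountably many distinct values, and distinct values of $D$ already forbid isomorphism.)
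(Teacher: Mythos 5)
Your argument is correct and is essentially the paper's own: the authors deduce the corollary immediately from Theorem~\ref{thm:mainexample} by noting that a family of countable ordinals unbounded in $\omega_1$ must be uncountable, exactly the counting argument you make, with the countability of each $D(\Free(D_\xi))$ (via pure 1-unrectifiability, RNP and Proposition~\ref{prop:RNP_D(X)}) and the isomorphic invariance of $D$ left implicit. Your write-up just spells out these details; the recursive length-$\omega_1$ selection is, as you yourself note, not needed.
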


We continue  with a result on bi-Lipschitz universality for the class of  complete, discrete and countable metric spaces. 
\begin{corollary}\label{c:NoUniversal} If a complete separable metric space $M$ is universal for the class of complete countable discrete metric spaces and bi-Lipschitz embeddings, then $M$ is not purely 1-unrectifiable.
\end{corollary}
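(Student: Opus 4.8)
The plan is to show that the universality hypothesis forces $D(\Free(M))$ to be at least $\omega_1$, and then to invoke the characterization of the RNP for separable Banach spaces in terms of the dentability index to conclude that $\Free(M)$ cannot have the RNP, hence that $M$ cannot be purely 1-unrectifiable.

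Concretely, I would argue by contradiction. Suppose that $M$ is complete, separable, universal for the class of complete countable discrete metric spaces with bi-Lipschitz embeddings, and \emph{also} purely 1-unrectifiable. By Theorem~\ref{thm:mainexample} together with the construction preceding it (which produces the spaces $D_\xi=M_\xi$ and shows that each $M_\alpha$ is countable, complete and discrete), for every countable ordinal $\xi$ there is a countable complete discrete metric space $D_\xi$ with $D(\Free(D_\xi))>\xi$. By universality, $D_\xi$ bi-Lipschitz embeds into $M$, so by the factorization property recalled among the preliminaries on Lipschitz-free spaces, $\Free(D_\xi)$ is linearly isomorphic to a closed subspace of $\Free(M)$. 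Now I would use that $D$ is an isomorphic invariant together with its monotonicity under closed subspaces — which follows from the identity $d_\varepsilon(B_X)\cap Y=d_\varepsilon(B_Y)$ for $Y\subseteq X$ and a routine transfinite induction — to obtain $\xi<D(\Free(D_\xi))\le D(\Free(M))$ for every $\xi<\omega_1$. Hence $D(\Free(M))\ge\omega_1$.

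On the other hand, $M$ separable implies $\Free(M)$ separable, and $M$ purely 1-unrectifiable implies, by \cite{AGPP}, that $\Free(M)$ has the RNP; Proposition~\ref{prop:RNP_D(X)} then forces $D(\Free(M))<\omega_1$, contradicting the previous paragraph. Therefore $M$ is not purely 1-unrectifiable, which is the claim.

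I do not expect a genuine obstacle here: once Theorem~\ref{thm:mainexample} is in hand, the corollary is essentially bookkeeping. The only two points requiring care are that the (possibly large) distortion of the bi-Lipschitz embeddings $D_\xi\hookrightarrow M$ is harmless because $D$ is an \emph{isomorphic}, not merely isometric, invariant, and that one really exploits the full strength of Theorem~\ref{thm:mainexample} — an \emph{unbounded} family of dentability indices over countable complete discrete spaces — rather than a single large value.
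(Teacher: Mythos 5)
Your argument is correct and is essentially the paper's own proof: the paper likewise combines Theorem~\ref{thm:mainexample} with the fact that a complete separable purely 1-unrectifiable $M$ gives $\F(M)$ the RNP (via \cite{AGPP}) and hence $D(\F(M))=\alpha<\omega_1$ by Proposition~\ref{prop:RNP_D(X)}, and then uses monotonicity of the dentability index under linear embeddings of free spaces induced by bi-Lipschitz embeddings. The only difference is presentational: the paper argues contrapositively (for $\xi>\alpha$ the space $D_\xi$ cannot bi-Lipschitz embed into $M$), while you package the same facts as a contradiction showing $D(\F(M))$ would exceed every countable ordinal.
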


\begin{proof}
Let $M$ be a complete separable purely 1-unrectifiable metric space. Then, by Theorem C in \cite{AGPP}, $\F(M)$ has RNP and by Proposition  \ref{prop:RNP_D(X)}, there exists $\alpha<\omega_1$ such that $D(\F(M))=\alpha$. But, by Theorem  \ref{thm:mainexample}, $D(\F(D_\xi))> D(\F(M))$, for all $\xi>\alpha$. Thus, for $\xi >\alpha$, $\F(D_\xi)$ does not linearly embed into $\F(M)$ and in particular, $D_\xi$ is a complete discrete separable metric space which does not bi-Lipschitz embed into $M$. 
\end{proof}

Another consequence of this work is that the free space of a complete countable discrete metric space can be very different from the free space of a uniformly discrete metric space or of a proper metric space. In particular:

\begin{corollary}\label{c:CCD-cpt} There exists a complete countable discrete metric space $D$ such that if $\F(D)$ linearly embeds into $\Free(M)$ for some metric space $M$, then $M$ is neither uniformly discrete, nor proper and p-1-u. 
Also if $\Free(D)$ is isomorphic to $\Free(M)$, then $M$ is not proper.
\end{corollary}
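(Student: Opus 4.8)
The plan is to combine Theorem~\ref{thm:mainexample} with the upper bounds on the dentability index established earlier in the paper for the two restricted classes. First I would fix a countable ordinal $\alpha_0$ large enough to exceed the relevant thresholds: recall that by Corollary~\ref{c:DentabilityCUD}, $D(\Free(M))\leq \omega^3$ for every uniformly discrete $M$, and by Remark~\ref{rem:p-1-uProper}, $D(\Free(M))\leq \omega^2$ for every proper p-1-u $M$. So take any countable ordinal $\xi$ with $\xi\geq \omega^3$ (for definiteness) and set $D=D_\xi$, the complete countable discrete metric space produced by Theorem~\ref{thm:mainexample}, which satisfies $D(\Free(D))>\xi\geq\omega^3$.

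Next I would argue by contradiction using the monotonicity of the dentability index under linear embeddings. Since $D(X)$ is an isomorphic invariant and is monotone under passing to subspaces (this is immediate from the fact that slices of a subspace $Y\subseteq X$ are traces of slices of $X$, together with the standard behavior of the derivations $d_\varepsilon^\alpha$), if $\Free(D)$ linearly embeds into $\Free(M)$ then $D(\Free(D))\leq D(\Free(M))$ up to the usual distortion-rescaling of $\varepsilon$, which does not affect the value of the index since $D(X)=\sup_{\varepsilon>0} D(X,\varepsilon)$. Hence $D(\Free(M))>\omega^3$. If $M$ were uniformly discrete this contradicts Corollary~\ref{c:DentabilityCUD}; if $M$ were proper and p-1-u it contradicts Remark~\ref{rem:p-1-uProper} (indeed more strongly, since then $D(\Free(M))\leq\omega^2$). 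For the last sentence, if $\Free(D)$ is isomorphic to $\Free(M)$ and $M$ is proper, then either $M$ is p-1-u --- in which case $D(\Free(M))\leq\omega^2<D(\Free(D))$, impossible --- or $M$ is not p-1-u, in which case by Theorem C of \cite{AGPP} $\Free(M)$ fails the RNP, so $D(\Free(M))=\infty\neq D(\Free(D))$ (which is a genuine countable ordinal bigger than $\xi$), again impossible; either way $M$ cannot be proper.

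One mild point to address carefully is that the bi-Lipschitz non-embeddability assertion ("$D_\xi$ does not bi-Lipschitz embed into $M$") is what we get for free from the free-space non-embeddability, via the factorization of Lipschitz maps recalled in the preliminaries; but the statement of the corollary is phrased in terms of linear embeddings of free spaces, so strictly the only thing needed is the isomorphic invariance and subspace monotonicity of $D$, both of which are recorded earlier. I would also make explicit that $\xi$ should be chosen so that $D_\xi$ exists as a \emph{genuine} space, i.e. $\xi<\omega_1$, which is fine since $\omega^3<\omega_1$.

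The main (and really the only) obstacle is bookkeeping rather than mathematics: one must be slightly attentive to the distortion in the embedding, ensuring that a linear embedding of $\Free(D)$ into $\Free(M)$ with some finite distortion $C$ still forces $d_{1/(2C)}^\xi(B_{\Free(M)})\neq\emptyset$ and hence $D(\Free(M))>\xi$, which uses that the supremum over $\varepsilon$ in the definition of $D$ absorbs the constant $C$; this is entirely analogous to the argument already carried out in the proof of Proposition~\ref{p:SufficientForDentabilityOmegaThree}. Everything else is a direct citation of Theorem~\ref{thm:mainexample}, Corollary~\ref{c:DentabilityCUD}, Remark~\ref{rem:p-1-uProper}, and \cite{AGPP}.
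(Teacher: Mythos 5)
Your proposal is correct and follows essentially the same route as the paper: pick $\xi>\omega^3$ (with $\xi<\omega_1$), apply Theorem~\ref{thm:mainexample} together with the subspace/isomorphism monotonicity of $D$, and contrast with the bounds $D(\Free(M))\leq\omega^3$ for $M$ uniformly discrete (Proposition~\ref{c:DentabilityCUD}) and $D(\Free(M))\leq\omega^2$ for $M$ proper p-1-u (Remark~\ref{rem:p-1-uProper}). Your treatment of the last sentence (case split on p-1-u, using $D(\Free(M))=\infty$ via failure of RNP) is an equivalent rearrangement of the paper's argument, which instead transfers RNP from $\Free(D_\xi)$ to $\Free(M)$ and invokes \cite{AGPP} to conclude $M$ is p-1-u; both rest on the same facts, including the (worth stating explicitly) observation that $D_\xi$, being countable and complete, is p-1-u, so $D(\Free(D_\xi))<\omega_1$.
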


\begin{proof} Let $\omega^3<\xi<\omega_1$. It follows from Theorem \ref{thm:mainexample} and Proposition~\ref{c:DentabilityCUD}  that $\F(D_\xi)$ does not linearly embed into the free space of a uniformly discrete metric space.

Assume that $M$ is proper p-1-u. Then, as we mentioned in Remark \ref{rem:p-1-uProper}, $D(\F(M))\le \omega^2$. Thus, if $\omega^2<\xi<\omega_1$, $\F(D_\xi)$ does not linearly embed into $\F(M)$.

Assume now that $M$ is proper and $\F(M)$ is isomorphic to $\F(D_\xi)$ for some $\xi<\omega_1$. Then $\F(M)$ has RNP and, by \cite{AGPP}, $M$ is p-1-u. The previous argument showed that this is impossible for $\omega^2<\xi<\omega_1$.

\end{proof}

\section*{Acknowledgments}
We wish to thank Eva Perneck\'a and Ram\'on Aliaga for suggesting the question that led to Corollary \ref{c:CCD-cpt} and for many insightful discussions while the second and third named authors were visiting the Czech Technical University in Fall 2024.  

The three authors were partially supported by the French ANR projects No. ANR-20-CE40-0006 and ANR-24-CE40-0892-01.

\end{document}